\newcommand\N{{\mathbb{N}}}
\newcommand\R{{\mathbb{R}}}
\newcommand{\maybed}{}  
\newcommand{\maybedsigma}{S} 
\newcommand{\differential}{\mathrm{d}}
\newcommand{\loc}{\mathrm{loc}}
\newcommand{\Index}{\mathcal{I}}
\newcommand{\constantB}{{B}}  
\newcommand{\constantM}{{M}}
\newcommand{\constantY}{{Y}}
\newcommand{\constantC}{\mathcal{C}}
\newcommand{\simeqone}{>0}
\newcommand{\simeqepsilonone}{}
\newcommand{\Span}{\operatorname{span}}
\newcommand{\1}{\mathbbm{1}}
\newcommand\BL{\operatorname{BL}}
\newcommand\BLg{\operatorname{BL}_{\mathrm{g}}}
\newcommand{\datum}{\mathbf}
\newcommand{\gap}{\beta}
\newcommand{\sump}{\rho}
\newcommand{\loss}{\sigma}
\newcommand{\cutscl}{\delta}
\newcommand{\gausscl}{\delta^{1+\gamma}}
\let\oldepsilon\epsilon
\let\oldvarepsilon\varepsilon
\renewcommand{\epsilon}{\oldvarepsilon}
\renewcommand{\varepsilon}{\oldepsilon}
\theoremstyle{plain}
  \newtheorem{theorem}{Theorem}[section]
  \newtheorem{proposition}[theorem]{Proposition}
  \newtheorem{prop}[theorem]{Proposition}
  \newtheorem{lemma}[theorem]{Lemma}
  \newtheorem{corollary}[theorem]{Corollary}
\theoremstyle{definition}
  \newtheorem*{remark*}{Remark}
  \newtheorem{definition}[theorem]{Definition}
  \newtheorem{example}[theorem]{Example}
  \newtheorem*{question*}{Question}
  \newtheorem*{notation}{Notation}
\numberwithin{equation}{section}
\date{\today}
\subjclass[2010]{42B37, 44A12, 52A40}
\keywords{Multilinear inequalities, Radon-like transforms, near-extremisers}
\title{On the nonlinear Brascamp--Lieb inequality}
\author[J. Bennett]{Jonathan Bennett}
\address{School of Mathematics, The Watson Building, University of Birmingham, Edgbaston,
Birmingham, B15 2TT, England.}
\email{J.Bennett@bham.ac.uk}
\author[N. Bez]{Neal Bez}
\address{Department of Mathematics, Graduate School of Science and Engineering,
Saitama University, Saitama 338-8570, Japan.}
\email{nealbez@mail.saitama-u.ac.jp}
\author[S. Buschenhenke]{Stefan Buschenhenke}
\address{Mathematisches Seminar, Christian-Albrechts-Universit\"at zu Kiel, 24116 Kiel, Germany.}
\email{buschenhenke@math.uni-kiel.de}
\author[M. G. Cowling]{Michael G. Cowling}
\address{School of Mathematics and Statistics, University of New South
Wales, Sydney NSW 2052, Australia.}
\email{m.cowling@unsw.edu.au}
\author[T. C. Flock]{Taryn C. Flock}
\address{Department of Mathematics and Statistics,
Lederle Graduate Research Tower, University of Massachusetts, 710 N.
Pleasant Street,
Amherst, MA 01003-9305, USA.} \curraddr{Department of Mathematics, Statistics, and Computer Science, Macalester College, 1600 Grand Ave, St Paul MN 55105 USA.}
\email{tflock@macalester.edu}
\thanks{The first, third and fifth authors were supported by the European Research Council [grant
number 307617], the second author by JSPS Grant-in-Aid for Young Scientists A [grant number 16H05995] and JSPS Grant-in-Aid for Challenging Exploratory Research [grant number 16K13771-01], and the fourth author by the Australian Research Council [grant number DP170103025]. }
\begin{document}
\begin{abstract}
We prove a nonlinear variant of the general Brascamp--Lieb inequality.
Our proof consists of running an efficient, or ``tight", induction on scales argument, which uses the existence of gaussian near-extremisers to the underlying linear Brascamp--Lieb inequality (Lieb's theorem) in a fundamental way.
A key ingredient is an effective version of Lieb's theorem, which we establish via a careful analysis of near-minimisers of weighted sums of exponential functions.
Instances of this inequality are quite prevalent in mathematics, and we illustrate this with some applications in harmonic analysis.
\end{abstract}
\maketitle

\section{Introduction}
The Brascamp--Lieb inequality is a well-known and far-reaching generalisation of a wide range of sharp functional inequalities in analysis, including the multilinear H\"older, Loomis--Whitney and Young convolution inequalities.
It takes the form
\begin{equation}\label{BL}
\int_{\R^n}\prod_{j=1}^mf_j^{p_j}(L_jx) \,dx
\leq \BL(\datum{L},\datum{p})\prod_{j=1}^m\left(\int_{\R^{n_j}}f_j(x_j) \,dx_j\right)^{p_j}
\end{equation}
over all nonnegative integrable functions $f_j$, where the mappings $L_j:\R^n\to\R^{n_j}$ are linear surjections, the exponents $p_j\in [0,1]$, and $\BL(\datum{L},\datum{p})$ denotes the smallest constant (which may be infinite).
We refer to $(\datum{L},\datum{p})=((L_j)_{j=1}^m,(p_j)_{j=1}^m)$ as the \textit{Brascamp--Lieb datum}, and $\BL(\datum{L},\datum{p})$ as the \textit{Brascamp--Lieb constant}.
This celebrated inequality was first formulated in \cite{BL} and has given rise to an elegant theory with wide-ranging applications across the mathematical sciences, from convex geometry, to information theory, harmonic analysis, partial differential equations, number theory and theoretical computer science. We refer to \cite{BCCT1}, \cite{Ball}, \cite{Barthe}, \cite{CC}, \cite{B}, \cite{Brown}, \cite{BDGuth}, \cite{GGOW} and the references there for some further discussion of these and other connections.
A natural question that has arisen in harmonic analysis and partial differential equations in recent years, beginning with \cite{BCW}, is whether one may replace the linear surjections $L_j$ in \eqref{BL} by smooth submersions, at least in a neighbourhood of a point.
The main purpose of this paper is to give an affirmative answer to this question.
\begin{theorem}\label{main}
Let $(\datum{L},\datum{p})$ be a Brascamp--Lieb datum.
Suppose that  $B_j:\R^n\to\R^{n_j}$ are $C^2$ submersions in a neighbourhood of a point $x_0$ and $dB_j(x_0)=L_j$, where $1\leq j\leq m$.
Then, for each $\epsilon>0$ there exists a neighbourhood $U$ of $x_0$ such that
\begin{equation}\label{nlbli}
\int_U \prod_{j=1}^m f_j^{p_j}(B_j(x)) \,dx
\leq (1+\epsilon)\BL(\datum{L},\datum{p})\prod_{j=1}^m\left(\int_{\R^{n_j}}f_j(x_j) \,dx_j \right)^{p_j}.
\end{equation}
\end{theorem}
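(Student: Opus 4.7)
The plan is to establish the nonlinear inequality by a tight induction on scales, so that each iteration worsens the constant by a factor of only $1 + o(1)$ and the accumulated losses remain controllable. The underlying heuristic is that on a cube of side $\delta$ about $x_0$, the deviation of $B_j$ from its linearisation $B_j(x_0) + L_j(x - x_0)$ is only $O(\delta^2)$, so at sufficiently small scales the nonlinear constant ought to approach $\BL(\datum{L}, \datum{p})$; the point is to certify this quantitatively and propagate it up to a fixed neighbourhood of $x_0$.

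Fix small parameters $\delta, \gamma > 0$ and decompose a cube of side $\delta$ about $x_0$ into a grid of sub-cubes $Q_\alpha$ of side $\delta^{1+\gamma}$ centred at points $x_\alpha$. Taylor expansion on each $Q_\alpha$ gives
$$B_j(x) = B_j(x_\alpha) + dB_j(x_\alpha)(x - x_\alpha) + O(\delta^{2(1+\gamma)}),$$
with $\|dB_j(x_\alpha) - L_j\| = O(\delta)$ from the $C^2$ hypothesis. A change of variables absorbing the quadratic error, followed by the linear Brascamp--Lieb inequality for the datum $((dB_j(x_\alpha))_j, \datum{p})$ and continuity of the BL constant in the linear datum, then yields
$$\int_{Q_\alpha} \prod_j f_j^{p_j}(B_j(x))\, dx \le (1 + O(\delta))\,\BL(\datum{L}, \datum{p}) \prod_j \Big(\int_{\R^{n_j}} f_j \,\1_{B_j(Q_\alpha)}\Big)^{p_j}.$$
Iterating through scales $\delta_k = \delta_0^{(1+\gamma)^k}$, the per-scale loss $(1 + O(\delta_k))$ multiplies to a convergent infinite product which, provided the summation step behaves well, can be made $\le 1 + \epsilon$ by choosing $\delta_0$ small.

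The central obstacle is the summation step: after applying the linear BL on each sub-cube, one is left with a discrete sum $\sum_\alpha \prod_j A_{j,\alpha}^{p_j}$ (where $A_{j,\alpha} = \int f_j\,\1_{B_j(Q_\alpha)}$) which must be bounded by $\prod_j (\int f_j)^{p_j}$ without significant loss. A naive H\"older argument fails because $\sum_j p_j$ need not equal $1$ (as already in the Loomis--Whitney case); this is precisely where gaussian near-extremisers and the effective form of Lieb's theorem must enter. The strategy I would pursue is to test the candidate bound against gaussians localised at each $Q_\alpha$ and exploit the quantitative structure of near-minimisers of weighted sums of exponentials to show that gaussian near-extremisers at the inner scale aggregate into gaussian near-extremisers at the outer scale, so the summation step inherits the tight constant from the linear BL. I expect this quantitative near-extremiser analysis, which couples the geometry of the decomposition to the gaussian reduction, to be the most delicate part of the argument and the true engine of the "tight" induction.
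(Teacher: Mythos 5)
You have correctly identified every key ingredient the paper uses --- a tight (``$1+o(1)$'' per step) induction on scales driven by gaussian near-extremisers, the continuity of $\BL(\cdot,\datum{p})$ in the linear datum, and the effective version of Lieb's theorem via near-minimisers of weighted exponential sums. But the mechanism you propose for passing between scales has a genuine gap, and it is precisely the gap you flag yourself. A discrete decomposition of $U_\delta$ into sub-cubes $Q_\alpha$ of side $\delta^{1+\gamma}$ leaves you, after applying the linear inequality on each piece, with $\sum_\alpha \prod_j A_{j,\alpha}^{p_j}$, and this sum genuinely cannot be recombined into $\prod_j(\int f_j)^{p_j}$ for general Brascamp--Lieb data: already for Loomis--Whitney data (where the BCW/BB arguments work) one needs the disjointness of the fibres $B_j(Q_\alpha)$ along the kernel directions of $L_j$, a geometric feature that fails completely once one leaves the basis condition \eqref{bas}. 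Your proposed repair --- ``gaussian near-extremisers at the inner scale aggregate into gaussian near-extremisers at the outer scale'' --- is not a workable mechanism as stated: it is not clear what object is being aggregated, nor how the near-minimiser structure of Theorem \ref{effexp} would control the discrete sum over $\alpha$.

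The paper's resolution is to avoid the discrete decomposition altogether (Section \ref{Sect:3} explicitly emphasises this). The recursive inequality (Proposition \ref{prop:recursive}) runs a nonlinear version of Ball's inequality \eqref{Ball2}: one multiplies and divides the localised integral $\int_{U_\delta(u)} \prod_j f_j^{p_j}\circ B_j$ by the gaussian integral $\int_{\R^n}\prod_j g_{u,\delta^\alpha,j}^{p_j}(dB_j(u)x)\,dx$, which is within $1-\delta^{\alpha\tau}$ of $\BL(dB(u),\datum{p})$ by the effective Lieb theorem. After truncating the gaussian to $U_{\delta^\alpha}$ and a Taylor perturbation of the affine approximations (Lemmas \ref{lem:truncation} and \ref{lem:perturb}), Fubini and the Ball identity produce functions $h_j^x(w) = f_j(w)\,g_{u,\delta^\alpha,j}(L_j^u x - w)$, to which one applies the nonlinear constant $\constantC(x,\delta^\alpha,\cdot,\cdot)$ at the finer scale; the recombination in $x$ is then a single application of the \emph{linear} Brascamp--Lieb inequality to the convolutions $\datum{f}*\datum{g}_{u,\delta^\alpha}$, which works cleanly because the $g_{u,\delta^\alpha,j}$ are $L^1$-normalised. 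Thus the gaussian convolution simultaneously performs the localisation to scale $\delta^\alpha$ and the global recombination, with no sum over cubes ever appearing. The role of Theorem \ref{effexp} is not to control a discrete sum but to guarantee that the near-extremising gaussians can be chosen with eccentricity polynomial in $1/\delta$, which is what allows the truncation and local-constancy lemmas (Lemmas \ref{lem:truncation} and \ref{lem:gaussconstant}) to close with only a $1+O(\delta^\sigma)$ loss per scale.
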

Theorem \ref{main} has some significant applications, for which we refer to Section~\ref{Sect:Applications}.

As may be expected, Theorem \ref{main} is equivalent to a superficially stronger statement where the underlying euclidean spaces are replaced by smooth manifolds.
This is a reflection of the (essential) diffeomorphism-invariance of its statement.
While this is arguably the natural context for such nonlinear Brascamp--Lieb inequalities, we choose to work in the euclidean setting for concreteness.

Theorem \ref{main} was previously unknown with any finite constant in place of the optimal $(1+\epsilon)$ factor,
and we do not know how to prove such a statement without the full argument of this paper.
Prior to Theorem \ref{main}, such nonlinear statements were only known under additional structural constraints on the nonlinear maps $\datum{B}$.
This began with \cite{BCW} and \cite{BB}, where the underlying linear maps $\datum{L}$ in the statement of Theorem \ref{main} were required to satisfy the highly-restrictive ``basis condition''
\begin{equation}\label{bas}
\bigoplus _{j=1}^m \ker L_j=\R^n,
\end{equation}
which keeps the complexity at the level of the classical Loomis--Whitney inequality; see also \cite{BHT}, \cite{KS}, \cite{CHV}, and applications in \cite{BHHT}, \cite{BH}, \cite{FGH}, \cite{HK}, \cite{HeK}, and \cite{K}.
We note that the results in \cite{CHV} also feature the best-possible $(1+\epsilon)$ factor present in \eqref{nlbli}, and cater for a wider class of data originating in work of Finner \cite{Finner}.
Further examples beyond the scope of \eqref{bas} may also be found in unpublished work of Stovall.
Very different arguments in \cite{Z} allowed for polynomial $\datum{B}$, but generated bounds that depend strongly on the degree.
The only nonlinear statements that were known for completely general data involved an $\epsilon$-loss in the scale of Sobolev spaces; see \cite{BBFL}.

Theorem \ref{main} has proved quite elusive.
This is perhaps to be expected, as the analysis of the classical inequality \eqref{BL} is delicate and relies heavily on linear-algebraic arguments throughout.
A particularly important example of linearity at work is in a fundamental theorem of Lieb \cite{L}, which states that the Brascamp--Lieb constant $\BL(\datum{L},\datum{p})$ is determined by centred gaussian inputs $f_j$; that is
\begin{equation}\label{gau}
f_j(x)=c_j\exp(-\pi\langle A_jx,x\rangle),
\end{equation}
where $A_j$ is a positive definite $n_j\times n_j$ matrix, and $c_j$ is a positive constant.
For such inputs,
\[
\frac{\int_{\R^n}\prod_{j=1}^m f_j^{p_j}(L_jx) \,dx}
        {\prod_{j=1}^m \left(\int_{\R^{n_j}}f_j(x_j) \,dx_j \right)^{p_j}}
=\frac{\prod_{j=1}^m\det(A_j)^{p_j/2}}{\det\left(\sum_{j=1}^mp_jL_j^*A_jL_j\right)^{1/2}},
\]
and so Lieb's theorem may be stated as follows:
\begin{theorem}[Lieb]\label{LIEB}
\begin{equation}\label{Lie}
\BL(\datum{L},\datum{p})=\sup_{\datum{A}}\BLg (\datum{L},\datum{p};\datum{A}),
\end{equation}
where $\datum{A}=(A_j)_{j=1}^m$, and
\[
\BLg (\datum{L},\datum{p};\datum{A})
=\frac{\prod_{j=1}^m\det(A_j)^{p_j/2}}{\det\left(\sum_{j=1}^mp_jL_j^*A_jL_j\right)^{1/2}}.
\]
\end{theorem}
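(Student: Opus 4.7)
The inequality $\BL(\datum{L},\datum{p}) \geq \sup_{\datum{A}}\BLg(\datum{L},\datum{p};\datum{A})$ is immediate: centred gaussians \eqref{gau} are admissible inputs to \eqref{BL}, and for them the Brascamp--Lieb ratio equals $\BLg(\datum{L},\datum{p};\datum{A})$ by the identity recorded just before the theorem. The substance of Theorem~\ref{LIEB} is the reverse inequality, and I plan to establish it by the heat-flow monotonicity technique of Carlen, Lieb and Loss in its multilinear refinement by Bennett, Carbery, Christ and Tao.

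Given nonnegative $f_j\in L^1(\R^{n_j})$ of unit mass and a tuple $\datum{A}=(A_j)$ of positive definite matrices (to be chosen in accordance with $f_j$), introduce the anisotropic heat extensions $u_j(t,\cdot) = f_j * \gamma^{(j)}_{tA_j^{-1}}$ on $\R^{n_j}$ and the running functional
\[
Q(t) = \int_{\R^n}\prod_{j=1}^m u_j(t, L_jx)^{p_j}\, dx,
\]
so that, after a standard mollification, $Q(0)$ is the left-hand side of \eqref{BL}. A direct scaling calculation using the dimension condition $\sum_j p_jn_j = n$ identifies
\[
\lim_{t\to\infty} Q(t) = \BLg(\datum{L},\datum{p};\datum{A}),
\]
and the central step is to prove a monotonicity formula asserting that $Q$ evolves toward this limit from below, so that $Q(0)\leq \BLg(\datum{L},\datum{p};\datum{A})$. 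One establishes this by differentiating $Q$ in $t$, substituting the anisotropic heat equation $\partial_tu_j = \nabla\cdot(A_j^{-1}\nabla u_j)$ for $\partial_tu_j$, integrating by parts in $x$, and recasting the resulting quadratic form in the log-gradients $(\nabla u_j/u_j)\circ L_j$ as a sum of squares via a Cauchy--Schwarz identity that rests on the linear-algebraic structure of the $L_j$ and on the scaling/dimension conditions underlying finiteness of $\BL$.

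Choosing $\datum{A}$ so as to optimise the upper bound $Q(0)\leq \BLg(\datum{L},\datum{p};\datum{A})$ and varying over all admissible $f_j$ then yields $\BL(\datum{L},\datum{p})\leq \sup_{\datum{A}}\BLg(\datum{L},\datum{p};\datum{A})$, completing the proof.

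The principal obstacle is the sign-definite rearrangement of the integrand in the monotonicity formula: it requires a careful algebraic reorganisation that rests essentially on the linearity of the $L_j$ and on a delicate Cauchy--Schwarz manipulation. It is precisely this linearity whose absence makes the nonlinear analogue in Theorem~\ref{main} so much more delicate, and whose nonlinear counterpart drives the substantial new machinery developed in the remainder of the paper.
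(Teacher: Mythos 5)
The paper does not prove Theorem~\ref{LIEB}; it is stated as Lieb's known theorem and cited to \cite{L} (see also \cite{BCCT1}), so there is no proof in the paper to compare against. Your heat-flow sketch is a legitimate known strategy, but as written it contains a genuine gap at the central monotonicity claim.

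You assert that, after substituting $\partial_t u_j=\tfrac12\nabla\cdot(A_j^{-1}\nabla u_j)$ and integrating by parts, the resulting quadratic form in the log-gradients can be rearranged as a nonnegative sum of squares by Cauchy--Schwarz for an \emph{arbitrary} choice of $\datum{A}$; this would give $Q'(t)\geq 0$ and hence $Q(0)\leq\BLg(\datum{L},\datum{p};\datum{A})$ for every $\datum{A}$, which is simply false. Already in the case of H\"older's inequality ($m=2$, $n_1=n_2=n$, $L_1=L_2=I_n$, $p_1+p_2=1$) one computes, with $w_j=\nabla\log u_j$,
\begin{align*}
Q'(t)=\tfrac{p_1p_2}{2}\int_{\R^n} u_1^{p_1}u_2^{p_2}\Bigl[\langle A_1^{-1}w_1,w_1\rangle-\langle(A_1^{-1}+A_2^{-1})w_1,w_2\rangle+\langle A_2^{-1}w_2,w_2\rangle\Bigr]\,dx,
\end{align*}
and the bracketed quadratic form is positive semi-definite precisely when $A_1=A_2$: in one dimension its discriminant is $(A_1^{-1}-A_2^{-1})^2>0$ whenever $A_1\neq A_2$, so it is genuinely indefinite. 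Concretely, taking $f_1=f_2=f$ nongaussian of unit mass and $A_1\neq A_2$, one has $Q(0)=1>\BLg(\datum{L},\datum{p};\datum{A})=\lim_{t\to\infty}Q(t)$, so $Q$ must decrease somewhere. Thus monotonicity holds only for the ``compatible'' $\datum{A}$, which is exactly a critical point of $\BLg$, and your plan to ``choose $\datum{A}$ so as to optimise the upper bound'' afterwards is circular: a compatible $\datum{A}$ must be produced \emph{before} monotonicity can be invoked. Producing such an $\datum{A}$ is nontrivial --- especially when the supremum in \eqref{Lie} is not attained --- and is precisely where the substance lies in the known proofs, whether via reduction to the geometric normal form $L_jL_j^*=I_{n_j}$, $\sum_jp_jL_j^*L_j=I_n$ as in \cite{BCCT1}, via Lieb's original rotation/tensor-power argument \cite{L}, or via Barthe's optimal-transport approach \cite{Barthe}. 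The present paper makes exactly this caveat in Section~\ref{Sect:3}, where the heat-flow nondecrease is deduced from \eqref{Ball1} only ``in the presence of a gaussian extremiser.''
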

Lieb's theorem, which clearly only makes sense for linear $L_j$, has been central to much of our understanding of the Brascamp--Lieb inequality.
For instance, it was used in \cite{BCCT1} to
show that $\BL(\datum{L},\datum{p})$ is finite if and only if the transversality condition
\begin{equation}\label{char}
\dim(V)\leq\sum_{j=1}^m p_j \dim(L_jV)
\end{equation}
holds for all subspaces $V$ of $\R^n$, along with the scaling condition
\begin{equation}\label{scal}
n=\sum_{j=1}^m p_jn_j.
\end{equation}
In the setting of \textit{nonlinear} data, three viable approaches have emerged: (i) the method of \textit{refinements} of Christ \cite{Ch}, (ii) the \textit{factorisation} method of Carbery, H\"anninen and Valdimarsson \cite{CHV}, and (iii) the method of \textit{induction-on-scales}, introduced in this setting by Bejenaru, Herr and Tataru \cite{BHT}.
All of these methods are at least effective under the rather rigid Loomis--Whitney-type structural condition \eqref{bas}, as may be seen in \cite{BCW}, \cite{CHV} and \cite{BB} respectively.

Our approach is based on the method of induction-on-scales.
This general method has proved to be extraordinarily effective in harmonic analysis since its introduction by Bourgain \cite{Bo} in the 1990s.
The basic idea is to introduce an auxiliary scale to the inequality in hand --- in our case, the diameter of the neighbourhood $U$ appearing in the statement of Theorem \ref{main} --- and show that for some suitable increasing sequence of scales, the relevant bounds are suitably stable as one passes from one scale to the next.
Usually there are many inefficiencies in passing between scales, and losses are incurred.
As a result the method rarely leads to endpoint estimates, although there are some notable exceptions, such as \cite{TaoBil} and \cite{BHT}.
Our proof of Theorem \ref{main} is another such exception, also requiring us to find an efficient passage between scales.
This turns out to be rather delicate, and in particular, requires us to find a suitably effective version of Lieb's theorem (Theorem \ref{LIEB}).
This ultimately requires us to address some natural questions in optimisation theory, which appear to be largely absent in the literature, and have some independent interest.
Our effective version --- the forthcoming Theorem \ref{efflie} --- is phrased in terms of ``$\delta$-near-extremisers", also known as ``$\delta$-optimal solutions" in the optimisation theory literature (see, for example, \cite{BS}).
We refer to an input $\datum{f}=(f_j)_{j=1}^m$ as a \textit{$\delta$-near-extremiser} for \eqref{BL} if
\begin{equation} \label{delnear}
\BL(\datum{L}, \datum{p}; \datum{f})
:=\frac{\int_{\R^n}\prod_{j=1}^m f_j^{p_j}(L_jx) \,dx} {\prod_{j=1}^m\left(\int_{\R^{n_j}}f_j(x_j) \,dx_j\right)^{p_j}}
\geq(1-\delta)\BL(\datum{L},\datum{p}).
\end{equation}
Of course, Lieb's Theorem guarantees that every Brascamp--Lieb functional $\datum{f} \mapsto \BL(\datum{L}, \datum{p}; \datum{f})$ has \textit{gaussian} $\delta$-near-extremisers, for all $\delta>0$.
What it doesn't tell us is anything quantitative about the set of gaussian $\delta$-near-extremisers, and this is what our effective version of Lieb's theorem will do.
In order to state this quantified form of Lieb's theorem, we shall fix $\datum{p}$ and underlying dimensions $n,n_1,\dots,n_m$, and refer to an $m$-tuple of linear maps $\datum{L}$ as \textit{admissible} if each $L_j:\R^n\to\R^{n_j}$ is a surjection (in other words, $(\datum{L},\datum{p})$ is a Brascamp--Lieb datum). 
\begin{theorem}[Effective version of Lieb's theorem]\label{efflie}
There exist $N\in \N$ and $\delta_0>0$ such that for every $\delta \in (0, \delta_0)$ and admissible $\datum{L}$,
\begin{equation} \label{effin}
 \inf_{\substack{\|C_j^*A_jC_j\|\leq \delta^{-N}\\ \|(C_j^*A_jC_j)^{-1}\|\leq \delta^{-N}}}
\BLg (\datum{L},\datum{p};\datum{A})^{-2} \leq
\BL(\datum{L}, \datum{p})^{-2}
+\delta\prod_{j=1}^m\det(L_jL_j^*)^{p_j}.
\end{equation}
Here $C_j=(L_jL_j^*)^{1/2}$.
\end{theorem}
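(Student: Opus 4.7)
Set $V_j := C_j^{-1} L_j$, so that $V_j V_j^* = I_{n_j}$. The bijection $A_j \leftrightarrow B_j := C_j^* A_j C_j$ of the positive-definite cone gives
\[
L_j^* A_j L_j = V_j^* B_j V_j, \qquad \det A_j = \det B_j / \det(L_j L_j^*),
\]
and hence $\BLg(\datum{L}, \datum{p}; \datum{A})^{-2} = \BLg(\datum{V}, \datum{p}; \datum{B})^{-2} \prod_j \det(L_j L_j^*)^{p_j}$. Taking suprema in $\datum{A}$ (equivalently $\datum{B}$) yields $\BL(\datum{L}, \datum{p})^{-2} = \BL(\datum{V}, \datum{p})^{-2} \prod_j \det(L_j L_j^*)^{p_j}$, so that Theorem \ref{efflie} is equivalent to the following uniform assertion: for every $m$-tuple $\datum{V}$ of coisometries,
\[
\inf_{\|B_j\|,\,\|B_j^{-1}\| \leq \delta^{-N}} \BLg(\datum{V}, \datum{p}; \datum{B})^{-2} \leq \BL(\datum{V}, \datum{p})^{-2} + \delta.
\]
The space of such $\datum{V}$ is a compact product of Stiefel manifolds, which is crucial for obtaining a constant $N$ that is uniform in $\datum{L}$.

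\textbf{Step 2 (weighted exponential-sum form).} Diagonalise $B_j = O_j^* \operatorname{diag}(e^{\mu_j^{(1)}},\ldots,e^{\mu_j^{(n_j)}}) O_j$ with $O_j$ orthogonal, and write each $V_j^* B_j V_j = \sum_k e^{\mu_j^{(k)}} w_{j,k} w_{j,k}^\top$, where $w_{j,k} \in \R^n$ is the $k$-th row of $O_j V_j$. The Cauchy--Binet identity applied to $\det\bigl(\sum_j p_j V_j^* B_j V_j\bigr)$ yields
\[
\BLg(\datum{V}, \datum{p}; \datum{B})^{-2} = \sum_S c_S(\datum{O}, \datum{V}) \exp(\lambda_S \cdot \mu),
\]
summed over $n$-element subsets $S \subseteq \{(j, k) : 1 \leq j \leq m,\, 1 \leq k \leq n_j\}$, where the Gram-type coefficients $c_S(\datum{O}, \datum{V}) = \bigl(\prod_{(j,k) \in S} p_j\bigr) \bigl(\det [w_{j,k}]_{(j,k) \in S}\bigr)^2$ are nonnegative and $\lambda_S$ has $(j,k)$-entry $\1_{(j,k) \in S} - p_j$. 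The matrix conditioning constraint becomes the box constraint $\|\mu\|_\infty \leq N \log(\delta^{-1})$, so we are reduced to finding a $\delta$-near-infimiser of a weighted sum of exponentials inside such a box, uniformly in $(\datum{O}, \datum{V})$.

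\textbf{Step 3 (near-minimisation of exponential sums, and the main obstacle).} The core analytic input is a quantitative lemma of independent interest: for any $F(\mu) = \sum_I c_I \exp(\lambda_I \cdot \mu)$ on $\R^d$ with $c_I \geq 0$ and $\{\lambda_I\}$ drawn from a fixed finite set, $F$ admits a $\delta$-near-infimiser $\tilde\mu$ with $\|\tilde\mu\|_\infty \lesssim \log(\delta^{-1})$. The argument uses log-convexity of $F$ and its recession cone $\mathcal{K} = \{v : \lambda_I \cdot v \leq 0 \text{ whenever } c_I > 0\}$, consisting of the directions along which $F$ is non-increasing: any near-infimiser can be shifted within the lineality subspace of $\mathcal{K}$ without changing $F$, and the residual component is confined to a bounded region whose size is controlled by comparing the active and near-active exponentials. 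The principal obstacle is achieving this bound \emph{uniformly} in $(\datum{O}, \datum{V})$: the coefficients $c_S(\datum{O}, \datum{V})$ may degenerate to zero as these parameters vary, altering $\mathcal{K}$ and potentially inflating the bound. The plan to control this is to stratify parameter space according to which $c_S$ exceed a threshold of the form $\delta^{N'}$; on each stratum the near-minimisation is quantitative and uniform, while the transitions between strata are handled by $O(\delta)$-perturbations together with continuity of $\BL$ on its domain of finiteness. The final exponent $N$ then depends only on $(n, n_1, \ldots, n_m, \datum{p})$, giving the polynomial dependence $\delta^{-N}$ asserted in the theorem.
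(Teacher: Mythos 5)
Your Steps 1 and 2 reproduce exactly the paper's reduction (to $L_jL_j^*=I_{n_j}$ via $C_j^{-1}L_j$, followed by the Cauchy--Binet expansion from \cite[Theorem~3.1]{BBCF}), and you correctly isolate the key analytic ingredient: a logarithmic bound on near-infimisers of the resulting weighted exponential sum. But Step 3 is where the argument needs to be tightened, and the plan as stated would not close.

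The decisive observation that you underemphasise is that the exponent vectors $\lambda_S$ (the $u_I$ of the paper) depend only on $\mathbf{p}$ and the dimensions, \emph{not} on $(\datum{O},\datum{V})$; only the coefficients $c_S$ do. The paper therefore proves the exponential-sum lemma (Theorem \ref{effexp}) \emph{uniformly in all nonnegative coefficient tuples}, with the error normalised by $\delta\max_I d_I$ precisely so that the statement is scale-invariant and tolerates coefficients that degenerate to zero. That normalisation is what you omit, and without it the lemma is false as stated (multiply all $c_I$ by a huge constant). Once the lemma is formulated this way, no stratification of $(\datum{O},\datum{V})$-space is needed at all: one simply plugs in the coefficients $d_I(\datum{L},\datum{p};\datum{R})$, observes $d_I\leq 1$ after the reduction $L_jL_j^*=I_{n_j}$ to control $\max_I d_I$, and minimises over $\datum{R}$ (using continuity of the infimum $g_\Index$ in the coefficients, proved via the same lemma, to guarantee an optimal $\datum{R}$ exists).

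Your proposed remedy for degenerate coefficients --- handling stratum transitions by $O(\delta)$-perturbations ``together with continuity of $\BL$ on its domain of finiteness'' --- points in the wrong direction. Mere continuity of $\BL$ in $\datum{L}$ (from \cite{BBCF}, \cite{GGOW}) gives no rate and so cannot convert an $O(\delta)$-perturbation of $\datum{V}$ into a quantitative $O(\delta)$ bound; the quantitative H\"older continuity (Theorem \ref{blho}) \emph{is itself a consequence} of the effective exponential-sum lemma, so invoking it here risks circularity. The paper avoids all of this by working directly with the coefficients: a pigeonhole argument on coefficient magnitudes at geometrically spaced thresholds $\delta^{N_1^k}$ produces a ``gap'' separating large from negligibly small coefficients; the large ones are handled by the polytope-geometric argument (via the convex hulls $K(I)$ and the separating-hyperplane construction of Lemma \ref{l:c1}, which plays the role of your recession cone), and the small ones contribute $O(\delta\max_I d_I)$ once a near-minimiser of size $O(\log(1/\delta))$ is fixed. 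This is an argument purely about weighted exponential sums, making no reference to $\BL$ or to the geometry of $(\datum{O},\datum{V})$, and that independence is exactly what delivers the universal $N$ and $\delta_0$.
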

We remark that whilst $\BLg (\datum{L},\datum{p};\datum{A})$ is invariant under the isotropic rescaling $\datum{A} \mapsto \lambda \datum{A}$, the conditions that $\|C_j^*A_jC_j\|\leq \delta^{-N}$ and $\|(C_j^*A_jC_j)^{-1}\|\leq \delta^{-N}$ are not (this point will re-emerge in the course of our proof of Theorem \ref{main}).
Also, on first sight it is perhaps a little unclear that Theorem \ref{efflie} is a statement about near-extremisers.
However, it is straightforward to conclude the following.
\begin{corollary}\label{effliecor} Suppose that  $(\datum{L}_0,\datum{p})$ is a Brascamp--Lieb datum with finite Brascamp--Lieb constant.
Then there exists $N\in\N$ and $\delta_0>0$ such that for every $\delta \in (0, \delta_0)$,
\[
\sup_{\substack{\|A_j\|,\|A_j^{-1}\|<\delta^{-N}}}  \BLg (\datum{L},\datum{p};\datum{A}) \geq(1-\delta)\BL(\datum{L},\datum{p})
\]
for all $\datum{L}$ sufficiently close to $\datum{L}_0$.
\end{corollary}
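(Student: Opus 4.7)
My plan is to derive the corollary directly from Theorem~\ref{efflie} by (a) converting the estimate on $\BLg^{-2}$ into an estimate on $\BLg$ relative to $\BL(\datum{L},\datum{p})$, and (b) translating the conditioning bounds on $C_j^*A_jC_j$ into conditioning bounds on $A_j$. Both steps use that the data $\datum{L}_0$ is fixed with finite Brascamp--Lieb constant, so the quantities $\BL(\datum{L},\datum{p})$, $\|L_jL_j^*\|$ and $\|(L_jL_j^*)^{-1}\|$ are all bounded above by a constant $M$ depending only on $\datum{L}_0$ and $\datum{p}$ for $\datum{L}$ in a sufficiently small neighbourhood of $\datum{L}_0$. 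The former uses the (known) local boundedness of the Brascamp--Lieb constant at data where it is finite, an upper semicontinuity property that follows for instance from the subspace characterisation \eqref{char} and scaling condition \eqref{scal}; the latter just uses that $L_j \mapsto L_jL_j^*$ is continuous and $L_{0,j}$ is surjective.

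Concretely, I would fix such a neighbourhood and a constant $M$ as above. For $\delta \in (0, \delta_0)$, apply Theorem~\ref{efflie} to produce $\datum{A}$ with
\[
\BLg(\datum{L},\datum{p};\datum{A})^{-2} \leq \BL(\datum{L},\datum{p})^{-2} + \delta\prod_{j=1}^m\det(L_jL_j^*)^{p_j} \leq \BL(\datum{L},\datum{p})^{-2}\bigl(1 + \delta M'\bigr),
\]
where $M' = M^2 \prod_j \|L_jL_j^*\|^{p_j n_j}$ is bounded by some $M''$ depending only on $\datum{L}_0$ and $\datum{p}$. Taking square roots and using $(1+t)^{-1/2} \geq 1 - t/2$ for $t \geq 0$, I obtain $\BLg(\datum{L},\datum{p};\datum{A}) \geq (1 - \tfrac12 M'' \delta)\BL(\datum{L},\datum{p})$. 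By rescaling $\delta$ by the constant factor $2/M''$ (and correspondingly shrinking $\delta_0$), this yields the desired factor $(1-\delta)$.

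For the conditioning, I use that $C_j = (L_jL_j^*)^{1/2}$ satisfies $\|C_j\|, \|C_j^{-1}\| \leq M$ on the neighbourhood. From $\|C_j^*A_jC_j\| \leq \delta^{-N}$ and the positive-definiteness of all matrices involved, the operator inequality $C_jA_jC_j \preceq \delta^{-N} I$ gives $A_j \preceq \delta^{-N}C_j^{-2}$, so $\|A_j\| \leq M^2 \delta^{-N}$. The analogous argument with $(C_jA_jC_j)^{-1}$ yields $\|A_j^{-1}\| \leq M^2 \delta^{-N}$. Enlarging $N$ to some $N' > N$ and further shrinking $\delta_0$ so that $M^2 \delta^{-N} < \delta^{-N'}$ for $\delta < \delta_0$, the chosen $\datum{A}$ lies in the supremum set of the corollary.

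I do not anticipate any serious obstacle here: the only non-formal input is the local boundedness of $\datum{L} \mapsto \BL(\datum{L},\datum{p})$ at data with finite constant, which is a standard consequence of the BCCT characterisation. The remainder is bookkeeping, turning Theorem~\ref{efflie}'s inf/sum-form bound into the sup/product-form bound of the corollary and rescaling the $\delta$ and $N$ parameters to absorb constants depending on $\datum{L}_0$ and $\datum{p}$.
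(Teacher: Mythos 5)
Your proposal is essentially the expected route: the paper explicitly declines to give a proof, saying only that the corollary is ``straightforward to conclude'' from Theorem~\ref{efflie}, and your two reduction steps---passing from the $\BLg^{-2}$ bound to a lower bound on $\BLg$ relative to $\BL(\datum{L},\datum{p})$, and transferring the conditioning bounds from $C_j^*A_jC_j$ to $A_j$ via the local uniform bounds on $\|C_j\|$ and $\|C_j^{-1}\|$---are exactly the natural bookkeeping one would carry out. The conversion of the operator inequalities $C_jA_jC_j\preceq\delta^{-N}I$ and $(C_jA_jC_j)^{-1}\preceq\delta^{-N}I$ into $\|A_j\|,\|A_j^{-1}\|\lesssim\delta^{-N}$ is correct, as is the absorption of the constant $M''$ by rescaling $\delta$ and enlarging $N$.

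One point in your justification is off and worth flagging, though it does not undermine the argument. You assert that the required local boundedness (upper semicontinuity) of $\datum{L}\mapsto\BL(\datum{L},\datum{p})$ ``follows for instance from the subspace characterisation \eqref{char} and scaling condition \eqref{scal}.'' That characterisation tells you which data have finite constant; it does not by itself give you upper semicontinuity. Indeed, writing $\BL(\datum{L},\datum{p})=\sup_{\datum{A}}\BLg(\datum{L},\datum{p};\datum{A})$, the constant is a supremum of continuous functions of $\datum{L}$ and is therefore automatically \emph{lower} semicontinuous; upper semicontinuity is the genuinely nontrivial direction. The fact you need---that $\BL(\cdot,\datum{p})$ is continuous, hence locally bounded wherever it is finite---is exactly the content of the continuity theorem of \cite{BBCF} and \cite{GGOW}, which this paper cites and uses in precisely this way elsewhere (for example in the proof of Theorem~\ref{main} when bounding $\max_{u}\BL(\differential\datum{B}(u),\datum{p})$). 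With that reference substituted for your semicontinuity claim, the argument is complete and correct.
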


In brief, both of these statements tell us that we may find gaussian $\delta$-near-extremisers for the Brascamp--Lieb inequality, whose eccentricity grows at most polynomially in $1/\delta$, and that this growth rate can be taken to be locally uniform in the linear maps $\datum{L}$.
It is perhaps interesting to note that Theorem \ref{efflie} is considerably stronger that Corollary \ref{effliecor}; in particular, the former is \emph{universal} in the sense that the constants $N$ and $\delta_0$ are entirely independent of $\datum{L}$, given fixed $\datum{p}$ and $n,n_1,\dots,n_m$.

Theorem \ref{efflie} (or Corollary \ref{effliecor}) has the potential for application wherever Lieb's theorem is of use.
For instance, the \emph{continuity} of the classical Brascamp--Lieb constant $\BL(\cdot,\datum{p})$, established using Lieb's theorem in \cite{BBCF} and \cite{GGOW}, may easily be improved to \emph{local H\"older continuity} using Corollary \ref{effliecor} --- see Section \ref{Sect:5} for a short proof of this fact.

Our approach to Theorem \ref{efflie} is to reduce it to a similar statement involving ``$\delta$-near-minimisers" for functions of the form
\begin{equation}\label{weightgauss}
\R^n\ni y\mapsto\sum_{j\in J}d_j\exp(\langle  u_j,y \rangle),
\end{equation}
where each $d_j$ is a nonnegative real number (coefficient), and each $u_j$ is a vector in $\R^n$.
Here, we are referring to a vector $y_\delta$ as a \emph{$\delta$-near-minimiser} for the function in \eqref{weightgauss} if
\begin{equation*}
\sum_{j\in J}d_j\exp(\langle  u_j,y_\delta  \rangle)
\leq \inf_{y \in \R^n} \sum_{j\in J}d_j\exp(\langle  u_j,y \rangle) + \delta \max_{j \in J} d_j.
\end{equation*}
Notice that this is a near-minimiser in an ``additive" sense; this differs somewhat from our earlier definition of a $\delta$-near-extremiser in \eqref{delnear} for the Brascamp--Lieb functional.
An elementary example of a function of the form \eqref{weightgauss} is the hyperbolic cosine, which of course attains its minimum at the origin, although minimisers do not need to exist in general --- for instance if $J$ contains only one element.
Our main result in this setting is the following.
\begin{theorem}\label{effexp}
There exist $N\in \N$ and $\delta_0>0$, depending only on $(u_j)_{j\in J}$, such that for every $\delta \in (0, \delta_0)$ and nonnegative coefficients $d_j$,
\[
\inf_{|y|\leq N\log(1/\delta)}\sum_{j\in J}d_j\exp(\langle  u_j,y \rangle)\leq \inf_{y \in \R^n}\sum_{j\in J}d_j\exp(\langle  u_j,y \rangle)+\delta\max_{j\in J}d_j.
\]
\end{theorem}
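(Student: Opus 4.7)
The plan is to prove Theorem~\ref{effexp} by induction on $\dim\Span\{u_j\}_{j \in J}$, combining a truncation step that removes negligible coefficients with a case analysis on the position of the origin relative to the convex hull of the surviving vectors. After normalising $\max_j d_j = 1$ and fixing parameters $N, K$ (depending only on $(u_j)$), I would set $\eta = \delta^K$ and $J_\eta = \{j : d_j \geq \eta\}$. The truncated function $F_\eta(y) = \sum_{j \in J_\eta} d_j e^{\langle u_j, y\rangle}$ differs from $F$ by at most $|J|\delta^{K-UN}$ on the ball $|y| \leq R := N\log(1/\delta)$, where $U = \max_j|u_j|$; this is $O(\delta)$ once $K \geq UN + 1$, so it suffices to produce a near-minimiser of $F_\eta$ inside $B_R$.

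For $F_\eta$ I would split into three cases based on the position of $0$ relative to $K_\eta := \operatorname{conv}\{u_j\}_{j \in J_\eta}$. If $0 \notin K_\eta$, then a separation direction $w$ satisfying $\langle u_j, w\rangle \leq -c_0 < 0$ on $J_\eta$ (the constant $c_0$ uniform over the finitely many subsets of $J$) forces $F_\eta(tw) \leq \delta$ for some $t = O(\log(1/\delta))$. If $0$ lies on the relative boundary of $K_\eta$, I would choose $w$ normal to a face containing $0$ and reduce to the strictly lower-dimensional problem $\tilde F(y') = \sum_{j \in J_\eta^0} d_j e^{\langle u_j, y'\rangle}$ on $w^\perp$ (with $J_\eta^0 = \{j : \langle u_j, w\rangle = 0\}$); the inductive hypothesis yields a near-minimiser there of norm $O(\log(1/\delta))$, and a subsequent shift along $w$ by $O(\log(1/\delta))$ kills the remaining exponentially decaying terms. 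If $0$ lies in the relative interior of $K_\eta$, the minimiser $y^*_\eta$ of $F_\eta$ on the span is attained, and the KKT relation $\sum_{j \in J_\eta} d_j u_j e^{\langle u_j, y^*_\eta\rangle} = 0$ yields an effective bound $|y^*_\eta| \leq C(\datum{u}) \log(1/\eta)$, where $C(\datum{u})$ is a stiffness constant uniform over the finitely many possible active subsets.

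I expect the main obstacle to be coordinating $N$ and $K$ so that both $K \geq UN + 1$ and $N \geq C(\datum{u}) K$ can be satisfied simultaneously. When the bound on $|y^*_\eta|$ still exceeds $R$, I would fall back on log-convexity of $F_\eta$ along the ray from $0$ through $y^*_\eta$, using the complementary observation that a very large $|y^*_\eta|$ forces $\inf F_\eta$ itself to be small---via the lower bound $\inf F_\eta \geq \eta \exp(c_1 |y^*_\eta|)$ contrasted with the upper bound $F_\eta(0) \leq |J|$---which tightens the projection estimate enough to absorb the additive error $\delta \max_j d_j$. All configuration-dependent constants remain positive and finite because the relevant geometric invariants (separation distances, interior depths, stiffness constants) range over the finitely many subsets of $J$.
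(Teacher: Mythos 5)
Your overall architecture (normalise $\max_j d_j=1$, truncate the small coefficients, then split on whether $0$ lies outside, on the relative boundary, or in the relative interior of $\operatorname{conv}\{u_j\}_{j\in J_\eta}$) lines up with the paper's: the three cases correspond to the paper's Lemma \ref{l:c1} plus Lemma \ref{lem:goodextremiser}, assembled in Lemma \ref{lem:nearextrem1}. The induction on $\dim\Span\{u_j\}$ is a cosmetic difference --- the paper instead performs the face reduction directly via a minimiser on the face followed by a shift along the separating normal --- and either framing would be fine.

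The real problem is the truncation. You set the threshold directly at $\eta=\delta^K$ and then observe, correctly, that the discarded terms grow by $e^{U|y|}\leq\delta^{-UN}$ on the ball $|y|\leq N\log(1/\delta)$, forcing $K\geq UN+1$; simultaneously, the retained coefficients can be as small as $\eta=\delta^K$, so the bound on the minimiser (from your KKT/growth estimate, equivalently Lemma \ref{lem:goodextremiser}) is only $|y^*_\eta|\lesssim C(\datum{u})\,K\log(1/\delta)$, forcing $N\gtrsim C(\datum{u})K$. Those two constraints close only if $C(\datum{u})\,U<1$, which fails for general data. You flag this tension as ``the main obstacle,'' which is exactly right, but the log-convexity fallback you sketch does not resolve it: the inequality $\inf F_\eta\geq\eta\exp(c_1|y^*_\eta|)$ together with $\inf F_\eta\leq|J|$ is precisely how one derives the bound $|y^*_\eta|\lesssim K\log(1/\delta)$ in the first place, so ``contrasting'' the two gives you nothing new, and ``very large $|y^*_\eta|$ forces $\inf F_\eta$ small'' has the inequality pointing the wrong way. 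Restricting to the point on the ray through $y^*_\eta$ at radius $R$ does not obviously control $F_\eta$ there in terms of $\inf F_\eta$ with the required $O(\delta)$ precision either.

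The missing idea is the pigeonhole gap. The paper explicitly warns that ``our argument is more subtle than simply choosing an appropriate power of $\delta$ as the threshold for the partition.'' Instead of a single cut, it defines bands $I_k=\{j:\delta^{N_1^{k+1}}\leq d_j<\delta^{N_1^k}\}$ for $k=0,\dots,|J|$ and invokes pigeonhole to find an empty band. Taking $I'=\{j:d_j\geq\delta^{N_1^k}\}$ for that $k$, the discarded coefficients are $\leq\delta^{N_1^{k+1}}=(\delta^{N_1^k})^{N_1}$, i.e., smaller than the smallest retained coefficient raised to the power $N_1$. This multiplicative gap, with $N_1$ chosen as $4C_0C_1$, makes the discarded terms decay fast enough to beat the exponential growth $\delta^{-C_0C_1(N_1^k+2)}$ coming from the size of the near-minimiser, and the circularity dissolves. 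The price is only that $N$ must be taken as large as $(4C_0C_1)^{|J|+1}$, which is still a constant depending only on $(u_j)_{j\in J}$. Without this gap-generating step your argument does not close, so the proposal has a genuine gap at its central step.
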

An important feature of Theorem \ref{effexp} is the \textit{uniformity} of the conclusion in the coefficients $d_j$.
Theorem 1.5 tells us that we can always find a $\delta$-near-minimiser $y$ within a distance $N\log(1/\delta)$ of the origin, provided that $N$ and $1/\delta$ are sufficiently large (depending only on the $u_j$). We briefly describe a simple example that illustrates the nature and essential optimality of this result in Section \ref{Sect:5}.

\subsubsection*{Structure of the paper}
In Section \ref{Sect:Applications} we present our applications of the nonlinear Brascamp--Lieb inequality (Theorem \ref{main}) --- see the forthcoming Corollaries \ref{t:PDEapp} and \ref{LieCor}.
In Section \ref{Sect:3} we describe the strategy behind our proof of Theorem \ref{main}, with emphasis on the inductive process.
In Section \ref{Sect:4} we develop a near-optimisation theory of functions of the form \eqref{weightgauss}, proving Theorem \ref{effexp}.
In Section \ref{Sect:5} we deduce Theorem \ref{efflie} from Theorem \ref{effexp}, and in Section \ref{Sect:6} we prove Theorem \ref{main}.
\subsubsection*{Acknowledgments} The authors wish to thank Tony Carbery for many in-depth discussions on the subject of this paper.
They also thank Alessio Martini for discussions surrounding the local sharp version of Young's convolution inequality. Finally the authors thank the anonymous referees for their careful reading and valuable recommendations.

\section{Applications}\label{Sect:Applications}
From the point of view of applications, the Brascamp--Lieb inequality \eqref{BL} is often more usefully expressed as a bound on a multilinear form, namely
\begin{equation}\label{BLL}
\int_{\R^n}\prod_{j=1}^m f_j(L_jx) \,dx
\leq \BL(\datum{L},\datum{p})\prod_{j=1}^m\|f_j\|_{L^{q_j}(\R^{n_j})};
\end{equation}
here we have simply replaced $f_j^{p_j}$ with $f_j$, and $p_j$ with $1/q_j$.
This is a parametrised version of the more intrinsic inequality
\begin{equation}\label{stefanlikes}
\int_H f_1\otimes\dots\otimes f_m \,d\mu_H\leq \BL(H,\mathbf{q})\prod_{j=1}^m\|f_j\|_{L^{q_j}(\R^{n_j})},
\end{equation}
where $H$ is a subspace of $\mathbb{R}^{n_1}\times\cdots\times\mathbb{R}^{n_m}$, and by a minor abuse of notation, we use $\BL(H,\mathbf{q})$ to denote the best constant.
Here, integration in \eqref{stefanlikes} is with respect to Lebesgue measure on $H$, and the connection between \eqref{BLL} and \eqref{stefanlikes} of course comes from expressing $H$ as the range of the linear map $x \mapsto (L_1x,\dots,L_mx)$; we caution that the constants $\BL(H,\mathbf{q})$ and $\BL(\datum{L},\datum{p})$ will not in general coincide under this identification, although their finiteness is certainly equivalent.
From this perspective Theorem \ref{main} acquires the following useful form:
\begin{theorem}\label{main'}
Suppose $M$ is a $C^2$ submanifold of $\mathbb{R}^{n_1}\times\cdots\times\mathbb{R}^{n_m}$ and $x\in M$. Then given any $\epsilon>0$ there exists $\delta>0$ such that
\begin{equation}\label{great!}
\int_{M\cap B(x,\delta)}f_1\otimes\cdots\otimes f_m \,d\mu_M\leq (1+\epsilon)\BL(T_xM,\mathbf{q})\prod_{j=1}^m\|f_j\|_{L^{q_j}(\R^{n_j})}.
\end{equation}
Here $d\mu_M$ denotes Lebesgue measure on $M$, and $T_xM$ denotes the tangent space to $M$ at $x$.
\end{theorem}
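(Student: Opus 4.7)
The plan is to reduce Theorem \ref{main'} to Theorem \ref{main} by locally parametrizing $M$ near $x$ with an \emph{isometric} chart. Let $n = \dim M$ and pick a linear isometry $\Psi: \R^n \to T_xM$ (which exists because $T_xM$ inherits a Euclidean structure from the ambient product space). By the $C^2$ submanifold hypothesis, any $C^2$ chart $\Phi_0$ with $\Phi_0(0)=x$ can be precomposed with the linear automorphism $(d\Phi_0(0))^{-1}\Psi$ of $\R^n$ to yield a $C^2$ chart $\Phi: U \to M$ satisfying $\Phi(0)=x$ and $d\Phi(0)=\Psi$. Setting $B_j := \pi_j \circ \Phi$, where $\pi_j$ denotes the projection onto $\R^{n_j}$, the $B_j$ are $C^2$ near the origin with $L_j := dB_j(0) = \pi_j \circ \Psi$.

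Assume first that each $L_j$ is surjective; the non-surjective case is vacuous, since then $\BL(T_xM,\mathbf{q}) = \infty$ (one verifies this by concentrating $f_j$ in a shrinking tube about $\pi_j(T_xM)$ when the corresponding $q_j$ is finite). Under surjectivity, the $B_j$ are $C^2$ submersions near $0$ and $(\datum{L}, \datum{p})$ with $p_j := 1/q_j$ is an admissible Brascamp--Lieb datum. The multilinear form of Theorem \ref{main} (obtained by substituting $f_j \mapsto g_j^{q_j}$ in \eqref{nlbli}) yields, for any $\epsilon_1 > 0$, a neighborhood $U_1 \subset U$ on which
\[
\int_{U_1} \prod_{j=1}^m g_j(B_j(y))\,dy \leq (1+\epsilon_1)\,\BL(\datum{L}, \datum{p})\prod_{j=1}^m \|g_j\|_{L^{q_j}(\R^{n_j})}.
\]

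To push this to an integral over $M$, observe that $\Phi^* d\mu_M = J_\Phi(y)\,dy$ with $J_\Phi(y) = \sqrt{\det(d\Phi(y)^* d\Phi(y))}$, and $J_\Phi(0) = 1$ because $d\Phi(0) = \Psi$ is an isometry. Shrinking $U_1$, we arrange $J_\Phi \leq 1 + \epsilon_2$ on $U_1$. For the constants, the identification $y \mapsto (L_1 y, \ldots, L_m y)$ between $\R^n$ and $T_xM$ is exactly $\Psi$, which is measure-preserving; a change of variables in \eqref{stefanlikes} then gives $\BL(\datum{L}, \datum{p}) = \BL(T_xM, \mathbf{q})$. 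Choosing $\delta$ small enough that $M \cap B(x, \delta) \subset \Phi(U_1)$ and $\epsilon_1, \epsilon_2$ with $(1+\epsilon_1)(1+\epsilon_2) \leq 1 + \epsilon$ then yields \eqref{great!}.

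The substantive content is entirely in Theorem \ref{main}; the only subtle point in the present reduction is the insistence on an isometric parametrization, which is exactly what makes $\BL(\datum{L}, \datum{p})$ agree with $\BL(T_xM, \mathbf{q})$. The cautionary remark following \eqref{stefanlikes} notes that the generic parametric identification fails to preserve measure, so this step cannot be omitted.
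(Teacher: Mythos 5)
Your argument is correct and is precisely the reduction that the paper treats as immediate (the paper states Theorem~\ref{main'} as a reformulation of Theorem~\ref{main} without spelling out the change of variables). The key point you identify --- that the parametrisation $\Phi$ must be chosen so that $d\Phi(0)$ is a linear \emph{isometry} $\Psi : \R^n \to T_xM$, which simultaneously forces $J_\Phi(0)=1$ and makes the map $y\mapsto(L_1y,\ldots,L_my)$ measure-preserving so that $\BL(\datum{L},\datum{p})=\BL(T_xM,\mathbf{q})$ --- is exactly the observation needed to avoid the loss flagged in the paper's caution after \eqref{stefanlikes}, and it is what lets the sharp $(1+\epsilon)$ factor pass through unchanged. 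The passage from the $f_j^{p_j}$ formulation of \eqref{nlbli} to the multilinear form with $g_j = f_j^{p_j}$ and the local Jacobian control are both handled correctly.

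One small gap: your dichotomy ``each $L_j$ surjective versus $\BL(T_xM,\mathbf{q})=\infty$'' is not exhaustive. If some $L_j=\pi_j\circ\Psi$ fails to be surjective but the corresponding $q_j=\infty$, then, as your own parenthetical hints, the shrinking-tube construction does not force the constant to be infinite, and indeed $\BL(T_xM,\mathbf{q})$ can be finite (take $H=\R\times\{0\}\subset\R\times\R$, $q_1=1$, $q_2=\infty$). In that case Theorem~\ref{main} cannot be applied directly because the datum is not admissible. The fix is routine: for each $j$ with $q_j=\infty$, bound $|f_j|\leq\|f_j\|_{L^\infty}$ pointwise before doing anything else and drop that index from the datum; this leaves a datum in which every remaining $q_k$ is finite, and the rest of your argument (including the constant comparison via $\Psi$) goes through for the reduced family. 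You should either make that reduction explicit at the outset or state that you assume $\mathbf{q}\in[1,\infty)^m$.
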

Multilinear forms of the type handled by Theorem \ref{main'} arise often in analysis.
In this section we illustrate this with two substantial applications.
The first provides a broad family of multilinear convolution inequalities that arise naturally in dispersive PDE, where the generality of Theorem \ref{main'} allows for rather minimal structural hypotheses. The second application concerns best constants in Young's convolution inequality on an abstract Lie group, and capitalises on the optimality of the nonlinear Brascamp--Lieb constant in Theorem \ref{main}. 
\begin{remark*}
Theorem \ref{main'} belongs to the theory of Radon-like transforms, and provides a sharp multilinear estimate under minimal first order structural (as opposed to regularity) hypotheses on the manifold $M$.
It is not difficult to show that imposing additional higher order (curvature-related) conditions does not lead to improved bounds; see \cite[Lemma 6]{BBG} for an explicit statement of this fact.
However, it is important to point out that multilinear Radon-like transforms often arise with no underlying finite Brascamp--Lieb constant, yet estimates are possible under alternative (higher order) nondegeneracy conditions on $M$ --- see \cite{TW} and \cite{Stovall}.
Obtaining such a higher order result at the level of generality of Theorem \ref{main'} would be of considerable interest.
\end{remark*}

\subsection{Euclidean harmonic analysis and partial differential equations}\label{subsect:pde}
The regularising effect of convolution on functions, and more generally measures, has long been a matter at the heart of harmonic analysis and its applications. A particularly natural question that arises in both the restriction theory of the Fourier transform and nonlinear dispersive PDE, concerns convolutions of singular measures supported on submanifolds of $\mathbb{R}^n$, and seeks to understand the conditions under which such convolutions may give rise to bounded or $L^p$ densities -- see for example, \cite{TVV}, \cite{TaoAJM}.
Such phenomena have been important, for example, in recent breakthroughs in the low-regularity theory of the Zakharov system of \cite{BHHT} and \cite{BH}, and rest on certain ``transversality" properties of the families of submanifolds involved. In these works the underlying estimates ultimately rely on nonlinear perturbations of the Brascamp--Lieb inequality for data satisfying the Loomis--Whitney type condition \eqref{bas}. There are further manifestations of the nonlinear Brascamp--Lieb inequality in PDE-related problems -- see for example, \cite{FGH}, \cite{K}, \cite{HK}, \cite{HeK}, \cite{BBG2}.

In this section we use our general nonlinear Brascamp--Lieb inequality (in the form of Theorem \ref{main'}) to establish a much broader class of multilinear convolution estimates, where the manifolds involved satisfy minimal structural (or ``transversality") conditions.

For $1 \leq j \leq m$, we let $\Sigma_j : U_j \to \R^n$ be a parametrisation of a $C^2$, bounded and $n_j$-dimensional submanifold $S_j$ of $\R^{n}$.
We equip $S_j$ with the measure $d\maybed\sigma_j$, which is the push-forward of Lebesgue measure on $U_j$, that is,
\[
\int_{S_j} g(y) \,d\sigma_j(y) = \int_{U_j} g (\Sigma_j (x)) \,dx
\]
for any continuous function $g$ on $S_j$. For notational convenience, and without loss of generality, we may suppose that these surfaces are parametrised in such a way that $0\in U_j$ for each $j$, so that the image of the differential $d\Sigma_j(0)$ is the tangent space to $S_j$ at the (arbitrary) point $\Sigma_j(0)$.
Finally, we denote by $d\datum{\Sigma}(0)^*$ the $m$-tuple of linear maps $(d\Sigma_1(0)^*, \dots, d\Sigma_m(0)^*)$.
\begin{corollary} \label{t:PDEapp}
Suppose that  $\BL(d\datum{\Sigma}(0)^*,\datum{p}) < \infty$.
Then
\begin{equation} \label{e:multconv}
\|g_1d\maybed\sigma_1 * \dots * g_md\maybed\sigma_m\|_{L^\infty(\R^n)} \leq C \prod_{j=1}^m \|g_j\|_{L^{q_j}(\maybedsigma_j)}
\end{equation}
for all $g_j \in L^{q_j}(\maybedsigma_j)$ with sufficiently small support, where $q_j = (1/p_j)'=1/(1-p_j)$.
\end{corollary}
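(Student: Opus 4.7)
I plan to deduce \eqref{e:multconv} from Theorem \ref{main} applied to a suitably enlarged nonlinear Brascamp--Lieb datum on $\R^{n_1}\times\cdots\times\R^{n_m}$. By duality with $L^1(\R^n)$, estimate \eqref{e:multconv} is equivalent to
\[
\int_{\R^n} h(y)\bigl(g_1 d\sigma_1\ast\cdots\ast g_m d\sigma_m\bigr)(y)\,dy \;\leq\; C\|h\|_{L^1(\R^n)}\prod_{j=1}^m \|g_j\|_{L^{q_j}(\sigma_j)}
\]
for nonnegative $h\in L^1(\R^n)$. Parametrising via $\Sigma_j$ and setting $f_j=g_j\circ\Sigma_j$ (so that $\|f_j\|_{L^{q_j}(\R^{n_j})}=\|g_j\|_{L^{q_j}(\sigma_j)}$), the left-hand side becomes
\[
\int_{\R^{n_1}\times\cdots\times\R^{n_m}} h\bigl(\Phi(\mathbf{x})\bigr)\prod_{j=1}^m f_j(\pi_j\mathbf{x})\,d\mathbf{x},\qquad \Phi(\mathbf{x}):=\sum_{j=1}^m\Sigma_j(x_j),\quad \pi_j\mathbf{x}:=x_j.
\]
This is precisely the multilinear functional attached to the $(m+1)$-map nonlinear Brascamp--Lieb datum built from $(\Phi,\pi_1,\ldots,\pi_m)$ with exponents $\tilde{\mathbf{p}}:=(1,1-p_1,\ldots,1-p_m)$, since $1/\tilde p_j=q_j$ for $j\geq 1$ while $\tilde p_0=1$ produces the $L^1$-norm of $h$.

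To invoke Theorem \ref{main} I have to check that the maps are $C^2$ submersions near $\mathbf{0}$ and that the linearised datum $\tilde{\mathbf{L}}:=(d\Phi(\mathbf{0}),\pi_1,\ldots,\pi_m)$ together with $\tilde{\mathbf{p}}$ satisfies $\BL(\tilde{\mathbf{L}},\tilde{\mathbf{p}})<\infty$. Writing $L_j:=d\Sigma_j(0)^\ast$, the projections $\pi_j$ are trivially linear surjections, and $d\Phi(\mathbf{0})\mathbf{v}=\sum_j L_j^\ast v_j$ is surjective because the BCCT transversality \eqref{char} for $(\mathbf{L},\mathbf{p})$ applied to $V=\bigcap_j\ker L_j=\bigl(\sum_j \mathrm{im}\,L_j^\ast\bigr)^\perp$ forces $\dim V\leq 0$, whence $\sum_j \mathrm{im}\,L_j^\ast = \R^n$ and $\Phi$ is a $C^2$ submersion near $\mathbf{0}$. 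The scaling condition for $(\tilde{\mathbf{L}},\tilde{\mathbf{p}})$, namely $\sum_j n_j = n+\sum_j(1-p_j)n_j$, rearranges to the assumed $n=\sum_j p_j n_j$.

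The main obstacle is to establish $\BL(\tilde{\mathbf{L}},\tilde{\mathbf{p}})<\infty$, a standard form of Brascamp--Lieb duality. The plan is to verify the BCCT transversality of $(\tilde{\mathbf{L}},\tilde{\mathbf{p}})$ directly: writing $H:=\ker d\Phi(\mathbf{0})$ and using $\dim V = \dim d\Phi(\mathbf{0})V + \dim(V\cap H)$, the required inequality
\[
\dim V \leq \dim d\Phi(\mathbf{0})V + \sum_j (1-p_j)\dim \pi_j V
\qquad \bigl(V\subset\R^{n_1}\times\cdots\times\R^{n_m}\bigr)
\]
reduces to the analogous transversality on the subspace $H$ with projections $\pi_j|_H$ and exponents $1-p_j$, applied to $W:=V\cap H$. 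The latter is the standard ``dual'' of \eqref{char} for $(\mathbf{L},\mathbf{p})$ and can be checked directly on product subspaces $W_1\times\cdots\times W_m\subset H$; alternatively, a gaussian computation via Lieb's theorem (Theorem \ref{LIEB}) exhibits explicit relations between the two families of gaussian Brascamp--Lieb constants. Once $\BL(\tilde{\mathbf{L}},\tilde{\mathbf{p}})<\infty$ is in hand, Theorem \ref{main} yields a neighbourhood $U$ of $\mathbf{0}$ in $\R^{n_1}\times\cdots\times\R^{n_m}$ on which the multilinear bound holds with constant $(1+\epsilon)\BL(\tilde{\mathbf{L}},\tilde{\mathbf{p}})$; choosing the supports of the $g_j$ inside $\Sigma_j$-images of sufficiently small neighbourhoods of $0\in\R^{n_j}$ ensures that all integrations occur inside $U$ and delivers \eqref{e:multconv}.
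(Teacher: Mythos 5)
Your reduction is sound and, in fact, essentially dual to the paper's: where you dualise against $h\in L^1(\R^n)$ and build an $(m+1)$-map datum with one exponent equal to $1$, the paper writes $g_1d\sigma_1*\cdots*g_md\sigma_m(0)$ as an integral against $\delta(\Sigma_1(y_1)+\cdots+\Sigma_m(y_m))$ and applies the intrinsic form of the nonlinear inequality (Theorem \ref{main'}) on the submanifold $M=\{\sum_j\Sigma_j(y_j)=0\}$. Either way the whole matter comes down to the same linear-algebraic statement: with $H:=\ker d\Phi(\mathbf{0})=T_0M$, one must show that finiteness of $\BL(d\datum{\Sigma}(0)^*,\datum{p})$ (transversality on $H^\perp$ with exponents $p_j$) implies $\dim W\leq \sum_j(1-p_j)\dim\pi_jW$ for \emph{every} subspace $W\subseteq H$. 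Your reduction of the $(m+1)$-datum transversality to exactly this statement, via $\dim V=\dim d\Phi(\mathbf{0})V+\dim(V\cap H)$, is correct.

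The genuine gap is in how you dispose of that dual transversality. The claim that it ``can be checked directly on product subspaces $W_1\times\cdots\times W_m\subset H$'' is not a valid reduction: the BCCT condition must be verified for \emph{all} subspaces of $H$, and generically $H$ contains no nontrivial product subspaces (indeed $H$ itself is essentially never one — in the Young datum, for example, $H$ is the anti-diagonal line $\Span\{(1,1,-1)\}$, whose only product subspace is $\{0\}$). Your alternative — a gaussian computation via Lieb's theorem — is the right route, but it is exactly where the work lies: it is the content of the paper's Proposition \ref{p:FourierBL}, which uses Lieb's theorem together with Parseval on $H$ and $H^\perp$ to show $\BL(H,\datum{q})=\constantB_{\datum{q}}\BL(H^\perp,\datum{q}')$, and hence that finiteness passes between the dual data. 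As it stands, the proposal sketches the correct architecture but leaves the crucial duality step either incorrectly argued or unproved.
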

Some remarks are in order.
The first thing to observe is that the condition $\BL(d\datum{\Sigma}(0)^*,\datum{p}) < \infty$ may be replaced with the manifestly geometric condition
\begin{equation}\label{geoPDE}
\sum_{j=1}^m\frac{\dim(V\cap (T_{\Sigma_j(0)}S_j)^\perp)}{q_j'}\leq (\rho-1)\dim(V);
\end{equation}
here $\rho=\sum_{j=1}^m 1/q_j'$, and \eqref{geoPDE} should hold for all subspaces $V$ of $\mathbb{R}^n$, with equality when $V=\mathbb{R}^n$. This is simply an interpretation of the finiteness condition \eqref{char} in this context, and we clarify that $T_{\Sigma_j(0)}S_j$ denotes the tangent space to $S_j$ at $\Sigma_j(0)$.


Corollary \ref{t:PDEapp} is closely related to the conjectural ``restriction Brascamp--Lieb inequality" studied in \cite{BBFL} and \cite{Z}. This very general conjectural inequality states that, under the hypotheses of Corollary \ref{t:PDEapp},
\begin{equation}\label{resbl}
\int_{\mathbb{R}^n}\prod_{j=1}^m |\widehat{g_jd\sigma_j}(x)|^{2p_j} \, dx \leq C\prod_{j=1}^m\|g_j\|_{L^2(S_j)}^{2p_j};
\end{equation}
see \cite{BBFL} for a weaker local form of this inequality, which generalises the multilinear restriction inequality of \cite{BCT}, along with those used in the recent theory of decouplings in, for example, \cite{BDGuth} and \cite{BDGuo}.
For instance,  if $p_j\leq 1/2$ (or equivalently $1\leq q_j\leq 2$), for all $j$, then \eqref{resbl} very quickly implies \eqref{e:multconv}, at least when the $g_j$ are characteristic functions of sets. To see this, we observe that
\begin{eqnarray*}
\begin{aligned}
\|g_1d\sigma_1 * \cdots * g_md\sigma_m\|_{L^\infty(\mathbb{R}^n)}&\leq\int_{\mathbb{R}^n}\prod_{j=1}^m |\widehat{g_jd\sigma_j}(x)| \, dx\\
&\leq\prod_{j=1}^{m}\|\widehat{g_jd\sigma_j}\|_{L^\infty(\mathbb{R}^n)}^{1-2p_j}\int_{\mathbb{R}^n}\prod_{k=1}^m |\widehat{g_kd\sigma_k}(x)|^{2p_k}\, dx \\
&\leq C \prod_{j=1}^{m}\|g_j\|_{L^
1(S_j)}^{1-2p_j}\prod_{k=1}^m\|g_k\|_{L^
2(S_k)}^{2p_k}
= C\prod_{j=1}^m\|g_j\|_{L^
{q_j}(S_j)}.
\end{aligned}
\end{eqnarray*}
Here we have used the trivial $L^1\rightarrow L^\infty$ bound for the extension operators, along with the conjectural inequality \eqref{resbl}, and the assumption that the $g_j$ are characteristic functions of sets. Similar remarks in the special case of the Loomis--Whitney inequality were made in \cite{BHT} and \cite{BB}. We refer to \cite{BBRIMS} for a detailed analysis of the relationship between \eqref{e:multconv} and \eqref{resbl}.



Corollary \ref{t:PDEapp} contains subtleties which are related to the appearance of the dual exponent $q_j'$ in the transversality assumption \eqref{geoPDE}. In order to deal with these we first establish the following general (Fourier) duality principle for Brascamp--Lieb data, phrased in terms of the Brascamp--Lieb constant appearing in \eqref{stefanlikes}.

\begin{proposition} \label{p:FourierBL} Suppose $\datum{q} = (q_j)_{j=1}^m \in [1,\infty]^m$. Then
$\BL(H,\datum{q}) = \constantB_\datum{q} \BL(H^\perp,\datum{q'}),$
where
\[
\constantB_\datum{q} = \prod_{j=1}^m A_{q_j}^{n_j}
\]
and
\begin{equation} \label{e:BecknerC}
A_{r} = \bigg( \frac{r^{1/r}}{r'^{1/r'}} \bigg)^{1/2}.
\end{equation}
\end{proposition}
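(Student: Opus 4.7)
The plan is to reduce the identity to centered gaussian inputs, where both sides can be evaluated explicitly and shown to match. The two key ingredients are a Fourier duality identity for subspace measures and the fact that the sharp Hausdorff--Young constant $A_{q_j}$ is attained with equality on centered gaussians. This reduction is essential: a direct Hausdorff--Young argument applied to general inputs would fail whenever some $q_j > 2$, whereas on gaussians the analogous identity is exact regardless of whether $q_j \le 2$ or $q_j \ge 2$.

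The first step is to establish the Fourier duality identity
\[
\int_H f_1 \otimes \cdots \otimes f_m \, d\mu_H = \int_{H^\perp} \hat f_1 \otimes \cdots \otimes \hat f_m \, d\mu_{H^\perp}
\]
for Schwartz $f_j$ on $\R^{n_j}$. This is a standard projection--slice identity: expanding each $f_j$ via Fourier inversion and interchanging orders, the inner integral $\int_H e^{2\pi i y \cdot \xi}\,d\mu_H(y)$ reduces (via the orthogonal splitting $\xi = \xi_H + \xi_{H^\perp}$ and the fact that $y\cdot\xi = y\cdot\xi_H$ for $y\in H$) to a Dirac delta concentrated on $H^\perp$. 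When $\datum{f}$ is a tuple of nonnegative centered gaussians, the tuple $\hat{\datum{f}}$ consists again of nonnegative centered gaussians, both sides of the identity are finite positive reals, and $\datum{f}\mapsto\hat{\datum{f}}$ is a bijection on this class.

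The second step is the elementary computation, for a single centered gaussian $f_j(x) = c_j e^{-\pi\langle A_j x,x\rangle}$ with $A_j$ positive definite, that
\[
\|\hat f_j\|_{L^{q_j'}} = A_{q_j}^{n_j}\, \|f_j\|_{L^{q_j}}.
\]
This follows from $\hat f_j(\xi) = c_j \det(A_j)^{-1/2} e^{-\pi\langle A_j^{-1}\xi,\xi\rangle}$ together with the gaussian integral formula; the $\det(A_j)$ prefactor cancels by virtue of the algebraic identity $-\tfrac12 + \tfrac{1}{2q_j} + \tfrac{1}{2q_j'} = 0$, leaving precisely $A_{q_j}^{n_j}$. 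Combining this with the Fourier duality identity yields, for every $m$-tuple $\datum{f}$ of nonnegative centered gaussians,
\[
\BL(H,\datum{q};\datum{f}) = \constantB_\datum{q}\, \BL(H^\perp,\datum{q'};\hat{\datum{f}}).
\]
Taking the supremum over such $\datum{f}$ (equivalently over $\hat{\datum{f}}$) and invoking Lieb's theorem on both sides then delivers $\BL(H,\datum{q}) = \constantB_\datum{q}\, \BL(H^\perp,\datum{q'})$.

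The main verification required is that Lieb's theorem (Theorem \ref{LIEB}) applies to the multilinear-form constants $\BL(H,\cdot)$ rather than only to the classical $\BL(\datum{L},\datum{p})$. This is a routine change of variables: parametrising $H$ as the range of some $x \mapsto (L_1 x,\dots,L_m x)$ introduces a $\det(L^*L)^{1/2}$ Jacobian factor relating the two constants, and the substitution $f_j \mapsto f_j^{1/p_j}$ with $p_j = 1/q_j$ preserves the class of positive centered gaussians. Together these reduce the sup-over-gaussians statement for $\BL(H,\datum{q})$ to the classical Theorem \ref{LIEB}, with the analogous statement for $H^\perp$ following identically.
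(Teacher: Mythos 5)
Your proposal is correct and rests on exactly the same three ingredients as the paper's proof: Lieb's theorem (transferred to the $\BL(H,\datum{q})$ normalisation), the generalised Parseval identity $\int_H f_1\otimes\cdots\otimes f_m\,d\mu_H = \int_{H^\perp}\widehat f_1\otimes\cdots\otimes\widehat f_m\,d\mu_{H^\perp}$, and the explicit gaussian Hausdorff--Young computation $\|\widehat f_j\|_{q_j'} = A_{q_j}^{n_j}\|f_j\|_{q_j}$. The one small divergence is in how equality is concluded: the paper proves only the inequality $\BL(H,\datum{q}) \leq \constantB_\datum{q}\BL(H^\perp,\datum{q'})$ and then invokes the involution $\constantB_\datum{q}^{-1} = \constantB_{\datum{q'}}$ together with $(H^\perp)^\perp = H$ to upgrade to equality, whereas you obtain equality in one pass by observing that the Fourier transform is a bijection on the class of centred gaussians and taking the supremum over that class on both sides simultaneously. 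Both routes are valid and of comparable length; yours has the minor advantage of making the role of Lieb's theorem symmetric in $H$ and $H^\perp$ from the outset, and you also spell out the (routine) verification that Lieb's theorem transfers to the $\BL(H,\datum{q})$ normalisation, which the paper leaves implicit.
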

\begin{proof}
Since ${\constantB_{\datum{q}}^{-1}} = \constantB_{\datum{q'}}$, it suffices to prove the inequality
\begin{equation} \label{e:FourierBLineq}
\BL(H,\datum{q}) \leq \constantB_\datum{q} \BL(H^\perp,\datum{q'}).
\end{equation}
To see this, we use Lieb's theorem (Theorem \ref{LIEB}) to write
\[
\BL(H,\datum{q}) = \lim_{\nu \to 0} \BL(H,\datum{q};\datum{f}_{\nu}),
\]
where $f_{\nu,j}(x) =\exp(-\langle A_{\nu,j} {x} , {x}\rangle)$ for a suitable family of positive definite matrices $A_{\nu,j}$.
By a generalised form of Parseval's identity,
\begin{equation} \label{e:Parseval}
\int_H f_1 \otimes \dots \otimes f_m \,d\mu_H
= \int_{H^\perp} \widehat{f}_1 \otimes \dots \otimes \widehat{f}_m \,d\mu_{H^\perp}
\end{equation}
for all sufficiently nice $f_j$ (the integrals involve the Lebesgue measures on $H$ and $H^\perp$), and therefore it follows that
\[
\BL(H,\datum{q};\datum{f_{\nu}}) = \BL(H^\perp,\datum{q}';\datum{\widehat{f}_{\nu}}) \prod_{j=1}^m \frac{\|\widehat{f}_{\nu,j} \|_{q_j'}}{\|f_{\nu,j} \|_{q_j}}
\]
for each $\nu > 0$.
An explicit computation reveals that
\[
\prod_{j=1}^m \frac{\|\widehat{f}_{\nu,j} \|_{q_j'}}{\|f_{\nu,j} \|_{q_j}} = \constantB_\datum{q}
\]
for each $\nu > 0$, and thus \eqref{e:FourierBLineq} follows.
\end{proof}
Proposition \ref{p:FourierBL} appears to be new, and so merits some brief remarks. First of all it implies the ``Fourier--Brascamp--Lieb inequality"
$$
\int_H f_1\otimes\dots\otimes f_m \,d\mu_H\leq \BL(H,\mathbf{q})\prod_{j=1}^m A_{q_j}^{-n_j}\|\widehat{f}_j\|_{L^{q_j'}(\R^{n_j})},
$$
which may, depending on the size of the exponents $q_j$ relative to $2$, be a stronger bound than the usual Brascamp--Lieb inequality \eqref{BLL}. An early manifestation of this idea may be found in \cite{Ball}.

Derived from the involution $(H,\datum{q}) \mapsto (H^\perp,\datum{q'})$, there is an involution acting on Brascamp--Lieb data $(\datum{L},\datum{p})$, which gives rise to linear mappings $\widetilde{L}_j : \R^{\widetilde{n}} \to \R^{n_j}$ determined (not uniquely) by
\[
\sum_{j=1}^m L_j^*\widetilde{L}_j = 0;
\]
here $\widetilde{n}=n_1+\cdots+n_m-n$.
As a concrete instance of this, one may easily check that data associated with Young's convolution inequality is transformed to data associated with the trilinear H\"older inequality; the best constant in the latter case is equal to one and thus Proposition \ref{p:FourierBL} generates the best constant in Young's convolution inequality.
Note that we are not providing any kind of simplification to the derivation of the best constant in Young's convolution inequality since, in the course of our proof of Proposition \ref{p:FourierBL}, we made use of Lieb's theorem.
However, it provides an interesting perspective which may be fruitful in similar contexts where, for example, the data $\widetilde{\datum{L}}$ is rather more manageable than the data $\datum{L}$.
\begin{proof}[Proof of Corollary \ref{t:PDEapp}]
We begin by observing that
\begin{equation} \label{e:multconvatx}
g_1d\maybed\sigma_1 * \dots * g_md\maybed\sigma_m(x)
= \int_{U_1 \times \dots \times U_m}  f_1 \otimes \dots \otimes f_m(y) \delta(F(y,x))  \,dy,
\end{equation}
where $f_j = g_j \circ \Sigma_j$, and
\begin{equation} \label{e:convF}
F(y,x) = \Sigma_1(y_1) + \dots + \Sigma_m(y_m) - x.
\end{equation}
By considering input functions with sufficiently small supports, it suffices to prove that
\begin{equation*}
|g_1d\maybed\sigma_1 * \dots * g_md\maybed\sigma_m(0)| \leq C\prod_{j=1}^m \|g_j\|_{L^{q_j}(\maybedsigma_j)}.
\end{equation*}
Hence by \eqref{e:multconvatx}, it is enough to prove that
\begin{equation*}
\int_M f_1 \otimes \dots \otimes f_m \,d\mu
\leq C \prod_{j=1}^m \|f_j\|_{L^{q_j}(\R^{n_j})}
\end{equation*}
for $f_j$ supported in a sufficiently small neighbourhood of the origin, where
\[
M = \{ (y_1,\ldots,y_m) \in U_1 \times \dots \times U_m : F(y,0) = 0\},
\]
$\mu$ is a suitable measure on $M$, and $F$ is given by \eqref{e:convF}.
By Theorem \ref{main'} it suffices to prove that $\BL(T_0M,\datum{q}) < \infty$ where
$
T_0M
$
is the tangent space to $M$ at the origin.
Since
\[
(T_0M)^\perp = \{ (d\Sigma_1(0)^*x,\ldots,d\Sigma_m(0)^*x) : x \in \R^n \}
\]
and given the assumption $\BL(d\datum{\Sigma}(0)^*,\datum{p}) < \infty$, Corollary \ref{t:PDEapp} now follows from Proposition \ref{p:FourierBL}.
\end{proof}
\subsection{Abstract harmonic analysis}
In abstract harmonic analysis, the question of determining the best constants in the Young convolution inequality and its cousin, the Hausdorff--Young inequality, has often been discussed, particularly following the work of Babenko \cite{Babenko-Izv-1961}, Beckner \cite{Beckner2, Beckner} and Brascamp--Lieb \cite{BL} which covered the euclidean space $\R^n$.
For example, in the 1970s, Fournier \cite{Fourn} showed that the optimal constant in both inequalities is less than 1 for nonabelian groups with no compact open subgroups, and Klein and Russo \cite{KR} calculated them in terms of the euclidean constants for some nonabelian Lie groups including the Heisenberg groups.
Later, Garc{\'\i}a--Cuerva, Marco and Parcet and then Garc{\'\i}a--Cuerva and Parcet \cite{Garcia-Cuerva-Marco-Parcet-TAMS-2003, Garcia-Cuerva-Parcet-PLMS-2004} considered this and related constants in the context of Banach space structure, but without finding any numerical values.

In recent work, Cowling, Martini, M\"uller and Parcet \cite{CMMP} computed the best constant for the Young convolution inequality and Hausdorff--Young inequality for a more extensive class of nonabelian Lie groups, and established a lower bound for general Lie groups when the functions are restricted to lie in small neighbourhoods of the identity.
More precisely, in the case of the Young convolution inequality, for a connected unimodular Lie group $G$ and triple of exponents $\datum{q} = (q_1,q_2,q_3) \in [1,\infty]^3$ satisfying $\sum_{j=1}^3 1/q_j = 2$, they define the constant $\constantY_\datum{q}(G)$ to be the best value of the constant $C$ in the inequality
\[
\int_{G} \int_{G} f_1(y) f_2(y^{-1}x) f_3(x)  \,dy \,dx
\leq C\prod_{j=1}^3 \|f_j\|_{q_j},
\]
where $f_j $ are nonnegative functions in $L^{q_j}(G)$, for $j=1,2,3$.
The constant $\constantY_\datum{q}(G;U)$ is defined similarly, but the functions $f_1, f_2, f_3$ are constrained to be supported in a relatively compact neighbourhood $U$ of the identity in $G$.
Evidently the constants $\constantY_{\datum{q}}(G;U)$ get smaller as $U$ gets smaller, and so it makes sense to define the constant $\constantY_{\datum{q}}^{\loc}(G)$ to be the infimum of the $\constantY_{\datum{q}}(G;U)$ as $U$ shrinks to the identity.
It is easy to see that
\begin{equation}\label{eq:local-non-local-interpolation}
\constantY_{\datum{q}}^{\loc}(G) \leq \constantY_{\datum{q}}(G) \leq 1;
\end{equation}
the right-hand inequality follows from standard interpolation arguments.

In this definition, the Lebesgue spaces are defined in terms of a left-invariant Haar measure on $G$.
The natural version of Young's inequality for convolution in the context of right-invariant Haar measures on general nonunimodular groups involves powers of the modular function as well; however, the modular function is bounded and bounded away from $0$ on relatively compact neighbourhoods of the identity in $G$, and as the neighbourhood shrinks, the modular function becomes closer to $1$.
As a result, one might also define $\constantY_{\datum{q}}^{\loc}(G)$ with a right-invariant measure, and the value of the constant would be the same as for a left-invariant measure.

In the previous papers on the subject, estimates were obtained for $\constantY_{\datum{q}}(G)$ and $\constantY_{\datum{q}}^{\loc}(G)$ for various types of Lie groups.
In particular, upper bounds were found for both constants on solvable Lie groups, and for similarly defined constants for central functions on compact Lie groups, but for general functions on semisimple Lie groups, say, there were no nontrivial upper bounds for $\constantY_{\datum{q}}(G)$ or $\constantY_{\datum{q}}^{\loc}(G)$ before this work.

By using a contraction argument, it was shown in \cite[Proposition 2.4]{CMMP} that
\begin{equation}\label{eq:general-lower-bound-local}
\constantY_{\datum{q}}^{\loc}(G) \geq (A_{q_1}A_{q_2}A_{q_3})^{\dim(G)},
\end{equation}
where, for any $q \in [1,\infty]$, the constant $A_{q}$ is given by \eqref{e:BecknerC}.

Combined with the inequality \eqref{eq:local-non-local-interpolation} and the estimates for solvable groups, it followed that $\constantY_{\datum{q}}(G)$, for a connected simply connected unimodular solvable group $G$, is exactly the right-hand side of \eqref{eq:general-lower-bound-local}.
In particular, $\constantY_{\datum{q}}(G)$ was known for many interesting groups, such as the Heisenberg groups that appear in several areas of complex analysis and partial differential equations.
However, for many Lie groups, it remained the case that the only upper bound for either constant ($\constantY_{\datum{q}}(G)$ or $\constantY_{\datum{q}}^{\loc}(G)$) was the trivial bound \eqref{eq:local-non-local-interpolation}.
The following corollary of the nonlinear Brascamp--Lieb inequality (Theorem \ref{main}) identifies the local constant $\constantY_{\datum{q}}^{\loc}(G)$ in full generality.
\begin{corollary}\label{LieCor}
Suppose that  $G$ is a connected Lie group.
Then
\begin{equation*}
\constantY_{\datum{q}}^{\loc}(G) = (A_{q_1}A_{q_2}A_{q_3})^{\dim(G)}.
\end{equation*}
\end{corollary}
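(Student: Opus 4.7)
The lower bound $\constantY_{\datum{q}}^{\loc}(G) \geq (A_{q_1}A_{q_2}A_{q_3})^{\dim G}$ is already supplied by \eqref{eq:general-lower-bound-local}, so only the matching upper bound requires proof. The plan is to transfer the trilinear convolution form to Euclidean space via exponential coordinates at the identity, recognise the resulting integral as one governed by Theorem \ref{main}, and appeal to the classical value of the Brascamp--Lieb constant for Euclidean Young convolution.

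Set $n = \dim G$. After the substitution $u = y^{-1}x$ and using left-invariance of Haar measure, Young's form becomes $\int_G\!\int_G f_1(y)f_2(u)f_3(yu)\,dy\,du$. I would choose a neighbourhood $V$ of $0 \in \mathfrak{g} \cong \R^n$ on which $\exp$ is a diffeomorphism, and pull the Haar measure back as $J(Y)\,dY$ with $J$ smooth and $J(0)=1$. For $f_j$ supported in $\exp(V)$, the renormalised inputs $g_j(X) := f_j(\exp X)\,J(X)^{1/q_j}$ satisfy $\|g_j\|_{L^{q_j}(\R^n)} = \|f_j\|_{L^{q_j}(G)}$, and the integral takes the form
\[
\int\!\!\int g_1(Y)\,g_2(U)\,g_3(B(Y,U))\,W(Y,U)\,dY\,dU,
\]
where $B$ is the analytic map defined (via Baker--Campbell--Hausdorff) by $\exp(B(Y,U)) = \exp(Y)\exp(U)$, and $W(Y,U) = J(Y)^{1/q_1'}J(U)^{1/q_2'}J(B(Y,U))^{-1/q_3}$ is smooth with $W(0,0) = 1$.

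Next I would apply Theorem \ref{main} to the three $C^2$ submersions $B_1(Y,U) = Y$, $B_2(Y,U) = U$ and $B_3(Y,U) = B(Y,U)$, whose differentials at the origin are $(I,0)$, $(0,I)$ and $(I,I)$ respectively by the first-order BCH expansion. Combined with the exponents $p_j = 1/q_j$ (note that $\sum_j p_j n_j = 2n$, matching the ambient dimension), this is precisely the Brascamp--Lieb datum for Euclidean Young convolution on $\R^n$, whose sharp constant is the classical $(A_{q_1}A_{q_2}A_{q_3})^n$ of Beckner \cite{Beckner2, Beckner} and Brascamp--Lieb \cite{BL}. Theorem \ref{main} therefore provides, for every $\epsilon > 0$, a neighbourhood on which the integral \emph{without} the $W$-factor is bounded by $(1+\epsilon)(A_{q_1}A_{q_2}A_{q_3})^n \prod_j \|g_j\|_{L^{q_j}(\R^n)}$. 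Shrinking the support further so that $\sup|W-1| < \epsilon$ absorbs the Jacobian error into a $(1+O(\epsilon))$ factor.

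Taking the $\exp$-image of the resulting neighbourhood yields $\constantY_{\datum{q}}(G;U) \leq (1+O(\epsilon))(A_{q_1}A_{q_2}A_{q_3})^n$ for sufficiently small $U$, and letting $\epsilon \to 0$ finishes the proof. The only substantive input is the nonlinear Brascamp--Lieb inequality (Theorem \ref{main}) with its sharp $(1+\epsilon)$ constant; the remainder is routine bookkeeping, and I expect no serious obstacle beyond ensuring that the Haar-measure Jacobian factors $J(Y), J(U), J(B(Y,U))$ and the weight $W$ all converge uniformly to $1$ as the support shrinks, so that no residual constant larger than $1$ survives the limit.
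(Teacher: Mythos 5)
Your proof is correct and follows essentially the same route the paper sketches: pass to exponential coordinates at the identity, use the Baker--Campbell--Hausdorff expansion to identify the differentials of the group multiplication with the Euclidean Young-convolution datum, and invoke Theorem \ref{main} with its sharp $(1+\epsilon)$ constant. You have simply supplied the Jacobian and weight bookkeeping that the paper dismisses as ``straightforward to verify.''
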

It is straightforward to verify that Corollary \ref{LieCor} follows from Theorem 1.1, as the differentials of the underlying group multiplication and inversion mappings at the identity correspond to the linear mappings in the classical version of Young's convolution inequality, via the Baker--Campbell--Hausdorff formula.

Young's convolution theorem shows that sharp local nonlinear Brascamp--Lieb inequalities do not in general give rise to sharp global inequalities.
For example, for a compact Lie group $G$, we have shown that $\constantY_{\datum{q}}^{\loc}(G) < 1$, but $\constantY_{\datum{q}}(G) = 1$, so the global constant is larger than the local constant. This is in contrast with the case of homogeneous groups, such as the Heisenberg group, where the local and global constants are easily seen to be the same by scaling. We refer the reader to \cite{CMMP} for much more on this topic, including many references and its connection with the best constant in the Hausdorff--Young inequality. It should be remarked here that there are other examples of global nonlinear Brascamp--Lieb inequalities established with sharp constants -- see for instance Carlen--Lieb--Loss \cite{CLL}, Barthe--Cordero-Erausquin--Maurey \cite{BCM}, Barthe--Cordero-Erausquin--Ledoux--Maurey \cite{BCLM}, and Bramati \cite{Bramati}.

\section{The proof strategy}\label{Sect:3}
As we describe in the introduction, a key element in our proof of Theorem \ref{main} is Theorem \ref{efflie} --- our effective version of Lieb's theorem.
Our mechanism for applying Theorem \ref{efflie} is a rather delicate instance of the method of induction-on-scales.
The purpose of this short section is to explain our general strategy for achieving this, avoiding reference to technical aspects.

The basic applicability of induction-on-scales in the setting of the Brascamp--Lieb inequality is easily seen in a fundamental inequality originating in work of Ball \cite{Ball}.
This inequality captures an important self-similarity property of the \textit{Brascamp--Lieb functional}
\[
\datum{f}\mapsto\BL(\datum{L},\datum{p}; \datum{f})
=\frac{\int_{\R^n}\prod_{j=1}^m f_j^{p_j}(L_j x) \,dx}
        {\prod_{j=1}^m\Bigl(\int_{\R^{n_j}}f_j(x_j) \,dx_j\Bigr)^{p_j}}.
\]
As we shall now see, this property strongly suggests approaching any sort of ``perturbation" of the Brascamp--Lieb inequality by the method of induction-on-scales.
Indeed the method of induction-on-scales has been fruitful on several occasions in this wider context --- see \cite{B,BCT,BHT,BB,Guth2,BBFL}, along with closely-related papers in Fourier restriction theory beginning with \cite{Bo}.

We begin by observing that for two $L^1$-normalised inputs $\datum{f}$ and $\datum{g}$ (in the sense that each $f_j$ and $g_j$ is a probability density),
\begin{equation}\label{Ballineq}
\begin{aligned}
\BL(\datum{L},\datum{p}; \datum{f})\BL(\datum{L},\datum{p}; \datum{g})
&=\int_{\R^n} \Bigl(\prod_{j=1}^m (f_j^{p_j} \circ L_j)\Bigr)*\Bigl(\prod_{k=1}^m (g_k^{p_k}\circ L_k)\Bigr)(x) \,dx\\
&=\int_{\R^n} \Biggl(\int_{\R^n} \prod_{j=1}^m[f_j(L_j y)g_j(L_j(x-y))]^{p_j} \,dy\Biggr) \,dx\\
&=\int_{\R^n} \Biggl(\int_{\R^n} \prod_{j=1}^m[f_j(L_j y)g_j(L_jx - L_jy))]^{p_j} \,dy\Biggr) \,dx\\
&=\int_{\R^n} \Biggl(\int_{\R^n} \prod_{j=1}^m[h_j^x(L_jy)]^{p_j} \,dy\Biggr) \,dx,
\end{aligned}
\end{equation}
where $h_j^x(z) = f_j(z) g_j(L_jx-z)$ for each $j$.
Writing $\datum{h}^x=(h_j^x)_{j=1}^m$ and $\datum{f}*\datum{g}= (f_j*g_j)_{j=1}^m$, we deduce that
\begin{equation*}
\begin{aligned}
\BL(\datum{L},\datum{p}; \datum{f})\BL(\datum{L},\datum{p}; \datum{g})
&\leq \int_{\R^n} \BL(\datum{L},\datum{p}; \datum{h}^x) \prod_{j=1}^m\Bigl(\int_{\R^{n_j}}h_j^x(z_j) \,dz_j\Bigr)^{p_j} \,dx\\
&=\int_{\R^n} \BL(\datum{L},\datum{p}; \datum{h}^x)
    \prod_{j=1}^m (f_j*g_j(L_jx))^{p_j} \,dx\\
&\leq \sup_x \BL(\datum{L},\datum{p}; \datum{h}^x) \BL(\datum{L},\datum{p}; \datum{f}*\datum{g}).
\end{aligned}
\end{equation*}
This we refer to as Ball's inequality.
It should be noted that the $L^1$ normalisation hypotheses on $\datum{f}$ and $\datum{g}$ may be dropped by homogeneity considerations.

Now, if $\datum{g}$ is a $\delta$-\emph{near-extremiser} for \eqref{BL}, in the sense that \eqref{delnear} holds, then we may immediately deduce that
\begin{equation}\label{Ball1}
\BL(\datum{L},\datum{p}; \datum{f})\leq (1+O(\delta))\BL(\datum{L},\datum{p}; \datum{f}*\datum{g})
\end{equation}
and
\begin{equation}\label{Ball2}
\BL(\datum{L},\datum{p}; \datum{f})\leq (1+O(\delta))\sup_x \BL(\datum{L},\datum{p}; \datum{h}^x).
\end{equation}
The inequalities \eqref{Ball1} and \eqref{Ball2} contain a surprising amount of information.
For example, \eqref{Ball1} tells us that the set of extremisers for \eqref{BL} is closed under convolution --- a fact that may be used along with the central limit theorem to deduce the existence of gaussian extremisers (see \cite{BCCT1}), provided that the set of extremisers is nonempty of course.
In the presence of a gaussian extremiser, inequality \eqref{Ball1} may be used again to deduce that the Brascamp--Lieb functional is nondecreasing as the inputs evolve under a suitable heat flow --- see \cite{CLL} and \cite{BCCT1} for more information on this heat-flow perspective on the Brascamp--Lieb inequality.

The closely-related inequality \eqref{Ball2}, while more complicated, will be more important for our purposes.
Informally, if we happen to have a $\delta$-near-extremising input $\datum{g}$ \emph{which resembles a bump function with small support}, then the function
\[
h_j^x(z) = f_j(z) g_j(L_jx-z)
\]
is simply the function $f_j$ localised to a small neighbourhood of the point $L_jx$.
The inequality \eqref{Ball2} then tells us that the Brascamp--Lieb functional is close to increasing ($\delta$ being small) as the general input functions $f_j$ are localised in this way.
Our approach to proving Theorem \ref{main} is motivated by this, and amounts to proving a suitable nonlinear variant of \eqref{Ball2}.
This approach is particularly natural, as input functions $f_j$ with ``shrinking support" will be increasingly unable to detect nonlinear structure in $B_j$, ultimately allowing us to reduce matters to a linear Brascamp--Lieb inequality.
Proving such a nonlinear variant of \eqref{Ball2} presents several difficulties.
Perhaps the most obvious difficulty is in finding a substitute for the explicit use of linearity in \eqref{Ballineq}.
Another difficulty relates to the fact that $(\datum{L}, \datum{p})$ may only have \emph{gaussian} near-extremisers, which needless to say, are not compactly supported. It turns out to be relatively straightforward to overcome this by using suitably truncated gaussians.
The main difficulty we encounter, however, stems from the fact that Lieb's theorem (Theorem \ref{main}) does not provide any quantitative information about the set of gaussian $\delta$-near-extremisers.
In particular, it does not tell us whether there exist $\delta$-near-extremisers that are sufficiently ``localised" to allow the inductive process to run.
What is needed is the availability of gaussian near-extremisers at all scales, whose ``eccentricities" are suitably controlled.
It is here where our effective version of Lieb's theorem (Theorem \ref{efflie}) plays a crucial role.
Once a suitable nonlinear variant of \eqref{Ball2} has been obtained (the forthcoming Proposition \ref{prop:recursive}), then provided the scale $\delta$ is taken from a sufficiently lacunary sequence $(\delta_k)$ converging to zero, the factors of $1+O(\delta)$ may be tolerated, as upon iteration they ultimately lead to a convergent
product $\prod_k(1+O(\delta_k))$, which can be made as close to $1$ as we please.

It should be remarked that for Brascamp--Lieb data where gaussian extremisers exist, such as for the class of so-called \emph{simple data}, there is no need for our effective version of Lieb's theorem. This simplification was helpful in devising the forthcoming induction-on-scales argument -- see the unpublished work \cite{BBBF} which this paper supercedes.

The basic inductive scheme described here should be compared with that in \cite{BHT} and \cite{BB} in the setting of Loomis--Whitney-type data (data satisfying the structural condition \eqref{bas}).
In that case, suitable compactly supported extremisers are available, and an analogous nonlinear version of \eqref{Ball2} is obtained.
The inductive proof that we present here is necessarily different in several important respects.
In particular, and unlike in most induction-on-scales arguments, we completely avoid discrete partitions of unity, and make fundamental use of certain special functions --- in this instance, gaussians.
This makes our approach somewhat closer to that of the heat-flow monotonicity arguments of \cite{CLL}, \cite{BCCT1} and \cite{BCT}.


\section{Exponential optimisation and a proof of Theorem \ref{effexp}}\label{Sect:4}

As preparation for the proof of Theorem \ref{effexp}, we begin in Section \ref{section:pre} by introducing some notation and preliminary observations, after which we state and prove the key lemmas in Section \ref{section:keylemmas}.
Finally, in Section \ref{section:effexpproof} we prove Theorem \ref{effexp} and, in addition, establish that the infimum of weighted sums of exponential functions is locally H\"older continuous with respect to the weights.

\subsection{Preliminaries} \label{section:pre}
Fix a finite index set $J$ and let $(u_j)_{j \in J}$ be a family of pairwise distinct
vectors in $\R^n$.
For each subset $I$ of $J$, we are interested in the functions $f_I$ given by
\begin{align}
	f_{I}(y,d)=\sum_{j\in I}d_j \exp(\langle u_j, y\rangle),
\end{align}
where $d=(d_j)_{j\in J}$, $d_j\geq0$, $y\in\R^n$, and their infimum
\begin{align}
	g_{I}(d)=\inf_{y \in \R^n} f_I(y,d).
\end{align}
In studying the infimum of $f_J(\cdot,d)$, since minimisers do not always exist, it is natural to consider near-minimisers of $f_J(\cdot,d)$ and to what extent we can control their size; if
\[
f_J(y_\delta,d)\leq g_J(d)+\delta \max_{j \in J} d_j
\]
for $\delta>0$, then what can be said about $y_\delta$?
Minimisers are not necessarily unique and can vary on an affine subspace, which means that $y_\delta$ need not be bounded in terms of $\delta$.
Nevertheless, as we will show, we can always find a $\delta$-near-minimiser that is bounded logarithmically in $\delta$.

To study $f_J$, it is very helpful to study functions $f_I$ generated by subsets $I$ of $J$, and we make the trivial observation that if $I_1\subseteq I_2\subseteq J$, then  $f_{I_1}(y,d)\leq f_{I_2}(y,d)$ and therefore
\begin{align}\label{eq:Emonoton}
	g_{I_1}(d)\leq g_{I_2}(d).
\end{align}
For any subset $I$ of $J$, we denote by $K(I)$ the convex hull of the set $\{u_j:j\in I\}$. This is of course a convex polytope in $\mathbb{R}^n$. We denote by $K(I)^{\circ}$ the relative interior of $K(I)$, that is, the interior of $K(I)$ relative to its affine hull, and denote by $\partial K(I)=K(I)\backslash K(I)^\circ$ its relative boundary. The polytope $K(I)$ plays a key role in the proof of Theorem \ref{effexp}.

If the $u_j$ for which $d_j>0$ do not span all of $\R^n$, then $f_I(\cdot ,d)$ is constant along the cosets of the subspace $K(I)^\perp$.
We may, if we wish, restrict attention to the span of $K(I)$.

Clearly, any $j \in J$ for which $d_j=0$ gives no contribution, so it is natural to introduce
\[
J_+=\{j\in J : d_j>0\}
\]
and study the function $f_{J_+}$, since clearly $f_{J_+}(y,d|_{J_+})=f_J(y,d)$.
In fact, we will first study $f_I(\cdot,d)$, where $I\subseteq J$ and $d\in(0,1]^I$.
There is the trivial case in which $I=\emptyset$, that is, $f$ is identically $0$.
It is easy to see that this is the only case where a minimiser exists and $g_I(d)=0$.
Excluding this case, we have the following trichotomy.
\begin{prop}\label{trichot}Let $I\neq\emptyset$.
Then
\begin{itemize}
	\item $0 \in K(I)^{\circ}$ if and only if a minimiser for $f_I(\cdot,d)$ exists and $g_I(d)>0$.
	\item $0 \in \partial K(I)$ if and only if a minimiser for $f_I(\cdot,d)$ does not exist and $g_I(d)>0$.
	\item $0 \notin K(I)$ if and only if a minimiser for $f_I(\cdot,d)$ does not exist and $g_I(d)=0$.
\end{itemize}
\end{prop}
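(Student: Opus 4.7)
The plan is to prove each forward implication (geometric hypothesis $\Rightarrow$ analytic conclusion) separately and then observe that the biconditionals follow by trichotomy. The three geometric alternatives $0 \in K(I)^\circ$, $0 \in \partial K(I)$ and $0 \notin K(I)$ partition $\R^n$. On the analytic side, $f_I(y, d) > 0$ everywhere (since $I \neq \emptyset$ and $d_j > 0$ for every $j \in I$), so a minimiser forces $g_I(d) > 0$; the three analytic outcomes listed are therefore the only ones compatible with this positivity, and they are mutually exclusive.

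Whenever $0 \in K(I)$ I would obtain $g_I(d) > 0$ by writing $0 = \sum_{j} \lambda_j u_j$ as a convex combination and applying weighted AM-GM:
\[
f_I(y,d) \geq \sum_{j : \lambda_j > 0} \lambda_j d_j \exp(\langle u_j, y\rangle) \geq \prod_{j : \lambda_j > 0} \bigl(d_j \exp(\langle u_j, y\rangle)\bigr)^{\lambda_j} = \prod_{j : \lambda_j > 0} d_j^{\lambda_j} > 0,
\]
since $\sum_{j} \lambda_j \langle u_j, y\rangle = 0$. Dually, when $0 \notin K(I)$, hyperplane separation supplies $v \in \R^n$ with $\langle u_j, v\rangle < 0$ for every $j \in I$; then $f_I(tv, d) \to 0$ as $t \to \infty$, so $g_I(d) = 0$ and no minimiser can exist since $f_I$ is strictly positive.

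For the existence clause in the first case, I would restrict attention to $W = \operatorname{span}\{u_j : j \in I\}$, which coincides with the affine hull of $K(I)$ since $0 \in K(I)$. When $0 \in K(I)^\circ$, every nonzero $v \in W$ has some $\langle u_j, v\rangle > 0$, for otherwise $K(I) \subseteq \{\langle \cdot, v\rangle \leq 0\}$ would contain a relatively open neighbourhood of $0$, forcing $\langle \epsilon v, v\rangle \leq 0$ for small $\epsilon > 0$. A compactness argument on the unit sphere of $W$ then yields coercivity of $f_I(\cdot, d)|_W$, so continuity produces a minimiser on $W$; translating by any vector in $W^\perp$ extends it to $\R^n$.

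When $0 \in \partial K(I)$, the supporting hyperplane theorem applied inside $W$ produces $v \in W \setminus \{0\}$ with $\langle u_j, v\rangle \leq 0$ for all $j \in I$. Because $W$ is the affine hull of $K(I)$, the polytope cannot lie in the proper affine subspace $\{w : \langle w, v\rangle = 0\}$, so $I'' := \{j : \langle u_j, v\rangle < 0\}$ is non-empty; conversely, writing $0 = \sum_j \lambda_j u_j$ as a convex combination forces $\lambda_j = 0$ on $I''$, so $I' := I \setminus I''$ is also non-empty. Letting $t \to \infty$ in $f_I(y + tv, d)$ yields $g_I(d) \leq g_{I'}(d)$, hence equality by \eqref{eq:Emonoton}. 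A putative minimiser $y_0$ would satisfy $g_I(d) = f_I(y_0, d) = f_{I'}(y_0, d) + f_{I''}(y_0, d) > f_{I'}(y_0, d) \geq g_{I'}(d)$, a contradiction. The main obstacle is keeping this convex-geometric bookkeeping clean --- in particular confirming that the supporting hyperplane is proper enough to render \emph{both} $I'$ and $I''$ non-empty, which rests on identifying $W$ with the affine hull rather than with any ambient span. Once this is in place the remaining ingredients (AM-GM, separation, coercivity) are essentially standard.
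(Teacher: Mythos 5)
Your proof is correct, but it is genuinely different from the paper's: you build everything from scratch, whereas the paper deliberately recycles the machinery already set up for the quantitative Theorem~\ref{effexp}. Concretely, the paper obtains existence of a minimiser for $0\in K(I)^\circ$ by citing Lemma~\ref{lem:goodextremiser}, obtains $g_I(d)=g_{I_1}(d)$ by citing \eqref{1107jul1218}, and reads off the dichotomy ($0\in K(I_1)^\circ$ versus $I_1=\emptyset$) from the structure established in the proof of Lemma~\ref{l:c1}; the non-extremisability in the boundary case is then extracted in exactly one line from $f_{I\setminus I_1}(y_0,d)\leq g_I(d)-g_{I_1}(d)=0$. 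You instead prove the positivity $g_I(d)>0$ for $0\in K(I)$ directly via weighted AM--GM (a nice, elementary argument that does not appear in the paper), establish coercivity on $W=\Span K(I)$ by a compactness argument on the unit sphere, handle $0\notin K(I)$ by strict separation, and in the boundary case manufacture your own decomposition $I=I'\sqcup I''$ from a supporting hyperplane in $W$, with $g_I=g_{I'}$ following from letting $t\to\infty$ in $f_I(y+tv,d)$ together with \eqref{eq:Emonoton}. Both routes are sound; the paper's is shorter in context because the lemmas were needed anyway, while yours has the advantage of being self-contained and of isolating the key convex-geometric facts (the identification of $W$ with the affine hull, the non-triviality of both $I'$ and $I''$) in a way that would survive being lifted out of this paper. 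One small remark: your final contradiction $g_I(d)=f_I(y_0,d)>f_{I'}(y_0,d)\geq g_{I'}(d)=g_I(d)$ uses strict positivity of $f_{I''}(y_0,d)$, which holds precisely because $I''\neq\emptyset$ and $d_j>0$ on $I$ --- you correctly flag the non-emptiness of $I''$ as the crux, and your argument for it (that $K(I)$ cannot lie in the proper affine subspace $\{\langle\cdot,v\rangle=0\}$ of its own affine hull) is exactly right.
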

We will give a proof of this proposition after proving Lemmas \ref{lem:goodextremiser} and \ref{lem:nearextrem1} below.
Before we come to these key lemmas, we introduce three positive constants, $C_0$, $c_0$ and $c_1$, which depend on the family $(u_j)_{j \in J}$.
The first of these is simply given by\footnote{For technical reasons, it is more convenient if $C_0\geq 1$, though it is not crucial for the argument.}
\[
C_0=1\vee\max\limits_{j\in J} |u_j|.
\]
The constant $c_0 \in (0,1)$ is chosen such that
\[
\Span(K(I))\cap\bar B(0,c_0)\subseteq  K(I)
\]
for all subsets $I$ of $J$ such that $0\in K(I)^{\circ}$.
We note that since $J$ is a finite set, it is certainly possible to choose such $c_0 \in (0,1)$ that depends only on $(u_j)_{j \in J}$.

The constant $c_1$ is given by the following lemma.

\begin{lemma} \label{l:c1}
There exists a constant $c_1 \in (0,1)$ depending only on $(u_j)_{j \in J}$ with the following property: for each subset $I$ of $J$ such that $0\notin K(I)^{\circ}$,  there exist a subset $I_1$ of $I$ and a unit vector $v \in \R^n$ such that
\begin{align}
\label{e:v1}   \langle v,u_j \rangle = 0\  &\quad\text{if } j\in  I_1, \\
\label{e:v2}   \langle v,u_j \rangle \geq  c_1& \quad\text{if } j\in  I\backslash  I_1,
\end{align}
and either $0\in K(I_1)^{\circ}$ or $0\notin K(I)$ and $ I_1=\emptyset$.
\end{lemma}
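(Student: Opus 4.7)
The plan is to prove Lemma~\ref{l:c1} by a separating/supporting hyperplane argument, doing two cases according to whether $0$ lies outside $K(I)$ altogether or on its relative boundary, and then extracting a uniform $c_1$ by a finiteness argument.

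First I would handle the easy case $0\notin K(I)$. Since $K(I)$ is a compact convex set disjoint from the origin, strict separation yields a unit vector $v$ and an $\epsilon_I>0$ with $\langle v,u_j\rangle\geq\epsilon_I$ for all $j\in I$. I would then set $I_1=\emptyset$, so that \eqref{e:v1} is vacuous, \eqref{e:v2} holds for any $c_1\leq\epsilon_I$, and the third alternative of the lemma is satisfied.

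The main case is $0\in\partial K(I)$, where I would use the fact that every point of a polytope lies in the relative interior of a unique face. Let $F$ be that (necessarily proper, since $0\notin K(I)^{\circ}$) face of $K(I)$ containing $0$ in its relative interior. Every face of a polytope is exposed, so there is a unit vector $v$ with $\langle v,x\rangle\geq 0$ on $K(I)$ and $F=\{x\in K(I):\langle v,x\rangle=0\}$. Setting
\[
I_1=\{j\in I:\langle v,u_j\rangle=0\}=\{j\in I:u_j\in F\},
\]
I would then observe that, since vertices of $F$ are vertices of $K(I)$, one has $K(I_1)=F$, so $0\in K(I_1)^{\circ}$ as required, while $\langle v,u_j\rangle>0$ for the remaining $j\in I\setminus I_1$. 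This produces a qualifying pair $(v,I_1)$ with strict positivity on $I\setminus I_1$, but not yet a quantitative lower bound.

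The final step is to upgrade strict positivity to a uniform constant $c_1>0$. For each pair $I_1\subseteq I\subseteq J$ that can appear in the above construction, I would consider the compact set
\[
V(I,I_1)=\bigl\{v\in S^{n-1}:\langle v,u_j\rangle=0\text{ for }j\in I_1,\ \langle v,u_j\rangle\geq 0\text{ for }j\in I\setminus I_1\bigr\},
\]
which is nonempty by the previous paragraphs, and maximise the continuous function $v\mapsto\min_{j\in I\setminus I_1}\langle v,u_j\rangle$ over it (interpreting the minimum as $+\infty$ if $I=I_1$, which actually will not occur since $0\notin K(I)^{\circ}$). Because $V(I,I_1)$ contains a vector at which this minimum is strictly positive, the maximum $c_1(I,I_1)$ is strictly positive. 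Taking $c_1$ to be the minimum of $c_1(I,I_1)$ over the finitely many admissible pairs $(I,I_1)$, intersected with $(0,1)$, produces a positive constant depending only on $(u_j)_{j\in J}$. The main obstacle is purely bookkeeping in Case~B: one must choose $I_1$ via the \emph{smallest} face containing $0$ (equivalently, as all indices annihilated by $v$) so that $K(I_1)$ equals that face rather than a proper subface; otherwise $0$ might only lie on the boundary of $K(I_1)$ and the iteration built on this lemma would fail.
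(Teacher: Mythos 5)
Your proposal is correct and follows essentially the same route as the paper: it splits into the cases $0\notin K(I)$ and $0\in\partial K(I)$, separates via a supporting hyperplane exposing the unique face containing $0$ in its relative interior (equivalently, the paper's maximal $I_1$ with $0\in K(I_1)^\circ$ and $K(I_1)$ a face), and then extracts a uniform $c_1>0$ by finiteness of the possible pairs $(I,I_1)$. The only cosmetic difference is that you obtain the quantitative bound by maximising over the compact set $V(I,I_1)$ of admissible unit normals, whereas the paper simply fixes one normal per $I$ and minimises the resulting finitely many positive quantities; both work equally well.
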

\begin{proof}
Suppose that $0\notin K(I)^{\circ}$.
Then either $0$ is contained in a face of the polytope $K(I)$, or is not contained in $K( I)$ at all.
In the first case, there exist a maximal subset $I_1$ of $I$ such that $0\in K(I_1)^\circ$ and $K(I_1)$ is a face of $K(I)$.
Therefore there is a unit vector $v$ that (interpreted as a functional) separates $K( I_1)$ from the rest of $K( I)$, in the sense that
\begin{align*}
\langle v,y \rangle=0 &\qquad \text{if }y\in K( I_1),  \\
\langle v,y \rangle>0 &\qquad \text{if }y\in K( I)\backslash K( I_1),
\end{align*}
and thus \eqref{e:v2} holds as long as we choose $c_1 \in (0,1)$ such that
\begin{equation*}
c_1 \leq \min_I \min_{y \in K(I \setminus I_1)} \langle v,y \rangle,
\end{equation*}
where the minimum in $I$ is taken over all subsets of $J$ such that $0$ is contained in a face of $K(I)$.
Such $c_1 \in (0,1)$ exists since $J$ is a finite set.

In the second case, when $0\notin K( I)$, we proceed similarly and separate $0$ from $K( I)$ by a unit vector $v$, in which case \eqref{e:v2} holds, provided that we choose $c_1 \in (0,1)$ such that
\begin{equation*}
c_1 \leq \min_I \min_{y \in K(I)} \langle v,y \rangle,
\end{equation*}
where the minimum in $I$ is taken over all subsets of $J$ such that $0 \notin K(I)$.
\end{proof}

\subsection{Key lemmas} \label{section:keylemmas}

A major problem that will arise in the study of $\delta$-near-minimisers is when the coefficients $d_j$ are very small compared with the parameter $\delta$.
Initially we shall proceed by considering coefficients $d\in(0,1]^I$ that are bounded away from zero (with the harmless normalisation $d_j \leq 1$), and obtain bounds on certain $\delta$-near-minimisers in terms of the quantity
\begin{align}
	\Delta=\Delta(I,d)=\min\limits_{j\in I} d_j.
\end{align}
We will later decompose our function $f_I$ as $f_{I'}+f_{I''}$, where $I'$ corresponds to ``big" $d_j$ and $I''$ to ``small" $d_j$, in a sense to be made precise later.

When $0\in{K}(I)^{\circ}$, the following lemma says that it is possible to find a minimiser of size comparable to $\log ({|I|}/{\Delta})$.
\begin{lemma}\label{lem:goodextremiser}
Assume that $0\in{K}(I)^{\circ}$, and let
 $d\in(0,1]^I$.
Then there exists $y\in\R^n$ such that
\begin{equation} \label{e:yextremiser}
 f_I(y,d)=g_I(d)
\end{equation}
and
\begin{equation} \label{e:Deltaupper}	
|y|\leq \frac{1}{c_0} \log\frac{|I|}{\Delta}.
\end{equation}
\end{lemma}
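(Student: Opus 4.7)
The plan is to exploit the hypothesis $0 \in K(I)^{\circ}$ to show that $f_I(\cdot,d)$ is quantitatively coercive on the subspace $W := \Span(K(I))$, to produce a minimiser via compactness, and then to bound its norm by combining this coercivity with the trivial upper bound $g_I(d) \leq f_I(0,d) = \sum_{j \in I} d_j \leq |I|$ (using $d_j \leq 1$).

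First I would reduce to $W$. Since $0 \in K(I)^{\circ}$, the affine hull of $K(I)$ passes through the origin and coincides with $W$; in particular, every $u_j$ with $j \in I$ lies in $W$, so $f_I(\cdot,d)$ is invariant under translations by $W^{\perp}$, and any minimiser found in $W$ remains a minimiser in $\R^n$ with the same Euclidean norm. For the coercivity estimate, fix a unit vector $w \in W$. The defining property of $c_0$ gives
\[
c_0 w \in W \cap \overline{B}(0,c_0) \subseteq K(I),
\]
so we may write $c_0 w = \sum_{j \in I} \lambda_j u_j$ for some convex weights $\lambda_j \geq 0$ with $\sum_{j \in I} \lambda_j = 1$. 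Pairing with $w$ yields $c_0 = \sum_{j \in I} \lambda_j \langle u_j, w \rangle$, and pigeonhole produces an index $j^{\ast} \in I$ with $\langle u_{j^{\ast}}, w \rangle \geq c_0$. Consequently, for $y = tw \in W$ with $t \geq 0$,
\[
f_I(y,d) \geq d_{j^{\ast}} \exp(c_0 t) \geq \Delta \exp(c_0 t).
\]

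This lower bound forces the sublevel sets of $f_I(\cdot,d)$ in $W$ to be bounded, so continuity and compactness deliver a minimiser $y^{\ast} \in W$ satisfying \eqref{e:yextremiser}. Inserting $y^{\ast}$ into the previous display and comparing with the upper bound $g_I(d) \leq |I|$ gives $\Delta \exp(c_0 |y^{\ast}|) \leq |I|$, which upon taking logarithms is exactly \eqref{e:Deltaupper}. The only mildly delicate point is the coercivity step: one must check that the rate $c_0$ is genuinely independent of the chosen direction $w$, and this is precisely built into its selection as a single constant serving the entire family $(u_j)_{j \in J}$. Everything else reduces to standard extremal analysis.
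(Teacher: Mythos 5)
Your proof is correct and takes essentially the same route as the paper: restrict to $\Span K(I)$, use the constant $c_0$ and convexity to extract an index $j$ with $\langle u_j,\cdot\rangle\geq c_0|\cdot|$ in the relevant direction, deduce coercivity, and compare against $g_I(d)\leq f_I(0,d)\leq|I|$. The only cosmetic difference is that the paper invokes strict convexity plus blow-up along rays to get the minimiser, whereas you derive existence from coercivity of sublevel sets; both are standard and interchangeable here.
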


\begin{proof}
After restriction to $\Span K(I)$, the function $f_I$ has a unique minimum $y$, as it is strictly convex and $\lim_{s \to +\infty} f_I(sv,d) = + \infty$ for all nonzero vectors $v \in \Span K(I)$.\footnote{We know much more about this case; for instance, the minimum varies real analytically in the $d_i$ and hence $g_I(d)$ is analytic.}

From the definition of $c_0$, it follows that $c_0y/|y|$ is contained in $K(I)$, so we can write
\begin{align}
	c_0 y = |y| \sum_{j\in I} \lambda_j u_j,
\end{align}
where $\lambda_j\in[0,1]$ and $\sum_{j\in I} \lambda_j=1$.
Then
\begin{align}
	c_0|y| = \sum_{j\in I} \lambda_j \langle u_j, y \rangle,
\end{align}
so there exists at least one $j$ (depending on $y$) such that $\langle u_j, y \rangle \geq c_0|y|$.
We conclude that
\begin{align}
	\Delta \exp(c_0 |y|)\leq d_j \exp(\langle u_j, y \rangle) \leq f_I(y,d)=g_I(d)\leq f_I(0,d) \leq |I|,
\end{align}
which immediately gives the bound \eqref{e:Deltaupper}.
\end{proof}

\begin{remark*}
A simple inequality that will be useful below is
\begin{align}\label{repeat}
	f_I(y,d)\leq |J| D \exp(C_0R)
\end{align}
whenever $I \subseteq J$, $|y|\leq R$ and $d \in [0,D]^I$.
\end{remark*}

We now consider the general situation, in which case minimisers may not exist.
However, in this case, the following lemma tells us that we can find a $\delta$-near-minimiser of size comparable to $\log ({1}/({\delta\Delta}))$.
\begin{lemma}\label{lem:nearextrem1}
There exist $C_1>0$ and $\delta_1>0$ with the following property: for all nonempty subsets $I$ of $J$, $\delta\in(0,\delta_1)$ and $d\in(0,1]^I$, there exists $y \in \R^n$ such that
\begin{equation*}
f_{I}(y,d) \leq g_I(d) +\delta
\end{equation*}
and
\begin{equation} \label{e:yupper}
|y|\leq C_1 \log \frac{1}{\delta\Delta}.
\end{equation}
\end{lemma}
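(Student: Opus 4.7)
The plan is to split into three geometric cases according to where the origin sits relative to the convex polytope $K(I)$: either (a) $0 \in K(I)^\circ$, (b) $0 \in \partial K(I)$, or (c) $0 \notin K(I)$. Case (a) is handled directly by Lemma~\ref{lem:goodextremiser}, which yields an actual minimiser $y$ with $|y|\leq c_0^{-1}\log(|I|/\Delta)$; provided $\delta_1$ is taken small enough that $\log(1/\delta_1)\geq \log|J|$, this already meets the desired bound $|y|\leq C_1\log(1/(\delta\Delta))$ for a suitable $C_1$.

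For cases (b) and (c) the idea is to start from an exact minimiser of a smaller subproblem and translate it along a suitable direction to drive the remaining terms exponentially small. I invoke Lemma~\ref{l:c1} to obtain a subset $I_1\subsetneq I$ (with $I_1=\emptyset$ in case (c) and $0\in K(I_1)^\circ$ in case (b)) and a unit vector $v$ satisfying $\langle v,u_j\rangle=0$ for $j\in I_1$ and $\langle v,u_j\rangle\geq c_1$ for $j\in I\setminus I_1$. Let $y_1$ denote the exact minimiser of $f_{I_1}(\cdot,d)$ furnished by Lemma~\ref{lem:goodextremiser} in case (b) (and set $y_1=0$ in case (c)), so that $|y_1|\leq c_0^{-1}\log(|I|/\Delta)$ after weakening via $\Delta(I_1,d)\geq\Delta$ and $|I_1|\leq |I|$. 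Then define $y := y_1 - tv$ for a parameter $t\geq 0$ to be chosen. Translation by $-tv$ preserves the $I_1$-part of $f$ because $v$ is orthogonal to every $u_j$ with $j\in I_1$; hence $f_{I_1}(y,d)=g_{I_1}(d)$. For the remaining terms, $\langle u_j,v\rangle\geq c_1$ combined with the a priori bound \eqref{repeat} applied to $f_{I\setminus I_1}(y_1,d)$ yields $f_{I\setminus I_1}(y,d)\leq |J|\exp(C_0|y_1|)\exp(-c_1 t)$. Choosing $t=c_1^{-1}(\log(|J|/\delta)+C_0|y_1|)$ forces this to be at most $\delta$, and combining with the monotonicity \eqref{eq:Emonoton} gives
\[
f_I(y,d) = g_{I_1}(d) + f_{I\setminus I_1}(y,d) \leq g_I(d) + \delta,
\]
as required. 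The bound $|y|\leq|y_1|+t$ then gives $|y|$ of size at most a constant multiple of $\log(|J|/\delta)+\log(|I|/\Delta)$, which is absorbed into $C_1\log(1/(\delta\Delta))$ for $C_1$ large enough depending on $c_0, c_1, C_0$ and $|J|$.

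The main obstacle is the bookkeeping required to ensure the constants $C_1$ and $\delta_1$ can be chosen uniformly across all nonempty subsets $I\subseteq J$ and all $d\in(0,1]^I$. This works because $|I|\leq|J|$ is a combinatorial quantity and $c_0, c_1, C_0$ depend only on the family $(u_j)_{j\in J}$, so the factors of $\log|J|$ and $\log|I|$ can be absorbed either into $C_1$ or into the standing requirement that $\delta<\delta_1$ with $\delta_1$ suitably small. A minor verification is that the application of Lemma~\ref{lem:goodextremiser} to $I_1$ in case (b) requires $0\in K(I_1)^\circ$, which is exactly the output of Lemma~\ref{l:c1}.
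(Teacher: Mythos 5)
Your proposal follows essentially the same route as the paper: dispatch the case $0\in K(I)^\circ$ via Lemma~\ref{lem:goodextremiser}, otherwise invoke Lemma~\ref{l:c1} to obtain $I_1$ and the separating direction $v$, take the exact minimiser $y_1$ of $f_{I_1}$ (or $y_1=0$ if $I_1=\emptyset$), translate along $-v$ to exponentially suppress $f_{I\setminus I_1}$ using the trivial bound \eqref{repeat}, and pick the translation length so that the error is at most $\delta$; the constant bookkeeping is handled identically in spirit. The only cosmetic differences are that you phrase the split as three cases (but (b) and (c) are handled together, just as in the paper) and you write the a priori bound as $|J|\exp(C_0|y_1|)$ rather than substituting the explicit estimate $|J|(|J|/\Delta)^{C_0/c_0}$ from Lemma~\ref{lem:goodextremiser}, which is the same thing after plugging in the bound on $|y_1|$.
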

\begin{proof}
If $0\in K(I)^{\circ}$, then we may find a minimiser by Lemma \ref{lem:goodextremiser} with the desired bound \eqref{e:yupper}, so it suffices to consider the case in which $0\notin K(I)^{\circ}$.

If $0\notin K(I)^{\circ}$, then we use Lemma \ref{l:c1} to find a subset $I_1$ of $I$ and a unit vector $v \in \R^n$ such that \eqref{e:v1} and \eqref{e:v2} hold.
Let $y_0$ be a minimiser of $f_{ I_1}$ chosen according to Lemma \ref{lem:goodextremiser} (if $I_1=\emptyset$, then $y_0=0$ will trivially fulfil \eqref{e:yextremiser} and \eqref{e:Deltaupper}).
We will modify $y_0$ by moving along $v$, which keeps $ f_{ I_1}$ stable and shrinks the remainder term $ f_{ I}- f_{ I_1}= f_{ I\backslash I_1}$.
Thus we set $y=y_{0}-sv$, where $s$ will be chosen later, and use \eqref{e:v1} and \eqref{e:v2} to estimate
\begin{align*}\label{1107jul1218'}
	f_{ I}(y,d)
	& = 		\sum_{j\in I_1}d_j \exp(\langle u_j, y_0\rangle)+\sum_{j\in I\backslash I_1}d_j \exp(\langle u_j,y_0\rangle -s\langle u_j,v\rangle)  \\
	& \leq  f_{ I_1}(y_0,d)+ f_{ I\backslash I_1}(y_0,d)\exp(-sc_1) \\
	& \leq g_{ I_1}(d)+ |J| \left(\frac{|J|}{\Delta}\right)^{C_0/c_0} \exp(-sc_1). 	
\end{align*}
For the final inequality we used the trivial estimate \eqref{repeat}.
Therefore, from the monotonicity property \eqref{eq:Emonoton} and the choice
\[
s=\frac{1}{c_1}\bigg(\log \frac{1}{\delta} +\log|J|+\frac{C_0}{c_0}\log \frac{|J|}{\Delta} \bigg),
\]
it follows that
\begin{equation} \label{1107jul1218}
f_{ I}(y,d) \leq g_{I_1}(d)+\delta \leq g_I(d)+\delta.
\end{equation}
We note the above choice of $s$ is strictly positive.

To complete the proof, we choose
\begin{equation} \label{e:C1choice}
\delta_1= \frac{1}{|J|}\qquad\text{and }\qquad C_1=\frac{4C_0}{c_0c_1},
\end{equation}
and may easily verify that
\begin{equation*}
	s \leq \frac{3C_0}{c_0c_1} \log \frac{1}{\delta\Delta} \,,
\end{equation*}
whence \eqref{e:yupper} holds.
\end{proof}

Before proving Theorem \ref{effexp}, we establish Proposition \ref{trichot}.

\begin{proof}[Proof of Proposition \ref{trichot}]
First of all, if $0\in K(I)^\circ$, then the existence of a minimiser follows from Lemma \ref{lem:goodextremiser}, and it is easy to see that then $g_I(d)>0$.

Now suppose that  $0 \notin K(I)^\circ$.
By \eqref{1107jul1218},
\begin{align}
	g_I(d) = \inf_{y \in \R^n} f_I(y,d) = g_{I_1}(d),
\end{align}
where either $0\in K(I_1)^{\circ}$ or $I_1=\emptyset$.
Now, from the proof of Lemma \ref{l:c1}, if $0\in \partial K(I)$ then $0\in K(I_1)^{\circ}$ (and $I_1\neq\emptyset$).
This means that the infimum of $g_{I_1}$ is attained and
\[
g_I(d) = g_{I_1}(d) = \min_{y \in \R^n} f_{I_1}(y,d) > 0.
\]
On the other hand, if $0\notin K(I)$, then $I_1=\emptyset$ and hence $g_I(d)=g_{I_1}(d) = 0$.

It remains to show that extremisability implies that $0\in K(I)^\circ$.
To this end, assume that a minimiser for $f_I(\cdot,d)$ exists and $0 \notin K(I)^\circ$.
If the infimum of $f_I(\cdot,d)$ is attained at $y_0$, then
\begin{equation*}
	f_{I\backslash I_1}(y_0,d) = f_{I}(y_0,d)-f_{I_1}(y_0,d)
			\leq g_I(d) - g_{I_1}(d) =0,
\end{equation*}
which means $I=I_1$, and thus contradicts properties of $I_1$.
We deduce that $0\in K(I)^\circ$ is necessary for extremisability, and this concludes the proof of Proposition \ref{trichot}.
\end{proof}

\subsection{Proof of Theorem \ref{effexp} and local H\"older continuity} \label{section:effexpproof}

\begin{proof}[Proof of Theorem \ref{effexp}]
Without loss of generality, we may assume that $d_j\leq 1$ for all $j\in J$.
The proof rests on a partition, depending on $\delta$, of the coefficients $d_j$ into two parts, one for ``big" coefficients and one for ``small" coefficients.
However, it will be important that there is a quantitative ``gap" at the boundary of this partition and therefore our argument is more subtle than simply choosing an appropriate power of $\delta$ as the threshold for the partition.

Let $N_1\geq C_1$ be a parameter that we will specify later, where $C_1$ is the constant arising from Lemma \ref{lem:nearextrem1}.
For nonnegative integers $k$, we define the disjoint subsets $I_k$ of $J$ by
\[
I_k=\{j\in J: \delta^{N_1^{k+1}}\leq d_j< \delta^{N_1^k}\}.
\]
Then, by the pigeonhole principle, there exists $k=k_\delta \in \{0,\ldots,|J|\}$ such that $I_k$ is empty.
For this $k$, we introduce
\[
I'=\{j\in J : d_j\geq\delta^{N_1^{k}}\}
\]
and observe that, since $I_k$ is empty,
\begin{equation*}
J \setminus I' = \{j \in J : d_j\leq\delta^{N_1^{k+1}} \}.
\end{equation*}

Now consider the function $f_{I'}$, where
\[
\Delta=\Delta(d,I')=\min\limits_{I\in I'} d_j \geq\delta^{N_1^{k}}.
\]
According to Lemma \ref{lem:nearextrem1}, there exists $y \in \R^n$ such that
\begin{equation*}
f_{I'}(y,d) \leq g_{I'}(d) + \delta^2
\end{equation*}
and
\begin{equation*}
|y| \leq C_1 \log \frac{1}{\delta^2 \Delta}.
\end{equation*}
Therefore, we have the upper bound
\begin{align}\label{ecc1}
	|y|\leq  C_1(N_1^{|J|}+2) \log \frac{1}{\delta},
\end{align}
and we will show that by choosing $N_1$ and $\delta_0$ appropriately, $y$ is a $\delta$-near-minimiser of $f_{J}$.
To see this, we use the trivial estimate \eqref{repeat} to obtain
\begin{equation*}
	f_{J\backslash I'}(y) \leq |J| \delta^{N_1^{k+1}} \delta^{-C_0C_1(N_1^k+2)}
\end{equation*}
and thus, if we choose $N_1= 4C_0C_1$, then
\begin{equation*}
f_{J\backslash I'}(y)  \leq  |J| \delta^2.
\end{equation*}
This immediately gives
\begin{equation*}
	f_{J}(y,d) \leq g_{ I'}(d)+\delta^2 +|J| \delta^2 \leq g_{J}(d)+\delta,
\end{equation*}
provided that
\begin{align}\label{choiceofd3}
	\delta_0\leq (1+|J|)^{-1}.
\end{align}
Finally, we note that if we choose
\begin{align}\label{choiceofN3}
		N=(4C_0C_1)^{|J|+1},
\end{align}
then $|y| \leq N \log (1/\delta)$ from our choice of $N_1$ and \eqref{ecc1}.
\end{proof}

\begin{remark*}
A review of the proofs of Lemma \ref{lem:nearextrem1} and Theorem \ref{effexp} reveals that an explicit choice of $N$ and $\delta_0$ for which the claim in Theorem \ref{effexp} holds is given by
\begin{align}\label{choiceofN4}
N= \left(\frac{16C_0^2}{c_0c_1}\right)^{|J|+1}		
\qquad\text{ and }\qquad
\delta_0= \frac{1}{|J|+1},
\end{align}
where the constants $C_0$, $c_0$ and $c_1$ are those defined at the start of Section \ref{section:pre}.
\end{remark*}

As an immediate application of Theorem \ref{effexp}, we show that the infimum $g_I$ is a locally H\"older-continuous function.
\begin{theorem}[Local H\"older continuity] \label{hoeldercont1}
There exists a number $\alpha \in (0,1)$, depending only on $(u_j)_{j \in J}$, such that for any $D > 0$ and $I \subseteq J$,
\begin{equation} \label{e:Holdercont}
	|g_I(d)-g_I(d')|\leq (D+|J|+2D|J|\delta_0^{-1})\|d-d'\|_\infty^\alpha
\end{equation}
 for all $d,d'\in [0,D]^I$. Here $\delta_0$ is given by Theorem \ref{effexp}.
\end{theorem}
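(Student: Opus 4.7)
The plan is to interpolate a trivial estimate (for large $\|d-d'\|_\infty$) with a direct perturbation argument based on Theorem \ref{effexp} (for small $\|d-d'\|_\infty$). Fix $d,d'\in[0,D]^I$, set $\epsilon:=\|d-d'\|_\infty$, and define the H\"older exponent to be
\[
\alpha=\frac{1}{1+C_0N},
\]
where $C_0=1\vee\max_{j\in J}|u_j|$ and $N$ is the constant furnished by Theorem \ref{effexp}. By extending $d$ and $d'$ by zero on $J\setminus I$ (which does not affect $f_I$), we may invoke Theorem \ref{effexp} directly with $f_I$ in place of $f_J$.

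\textbf{Large-$\epsilon$ regime.} If $\epsilon^\alpha\geq\delta_0$, then the elementary bounds $0\leq g_I(d)\leq f_I(0,d)\leq|J|D$ (and likewise for $d'$) yield
\[
|g_I(d)-g_I(d')|\leq|J|D\leq|J|D\delta_0^{-1}\epsilon^\alpha,
\]
which is absorbed into the target constant. \textbf{Small-$\epsilon$ regime.} If $\epsilon^\alpha<\delta_0$, set $\delta:=\epsilon^\alpha$ and apply Theorem \ref{effexp} to produce $y\in\R^n$ with $|y|\leq N\log(1/\delta)$ and
\[
f_I(y,d)\leq g_I(d)+\delta\max_{j\in I}d_j\leq g_I(d)+\delta D.
\]
Evaluating $f_I(\cdot,d')$ at the same $y$, and using $e^{\langle u_j,y\rangle}\leq e^{C_0|y|}\leq\delta^{-C_0N}$, we obtain
\[
g_I(d')\leq f_I(y,d')\leq f_I(y,d)+\sum_{j\in I}|d_j-d'_j|\,e^{\langle u_j,y\rangle}\leq g_I(d)+\delta D+|J|\epsilon\delta^{-C_0N}.
\]
The exponent $\alpha$ was chosen exactly so that the two error terms balance, namely $\delta D=\epsilon^\alpha D$ and $|J|\epsilon\delta^{-C_0N}=|J|\epsilon^{1-\alpha C_0N}=|J|\epsilon^\alpha$, whence $g_I(d')-g_I(d)\leq(D+|J|)\epsilon^\alpha$. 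Swapping the roles of $d$ and $d'$ and combining with the large-$\epsilon$ regime gives the claimed estimate.

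There is no real obstacle beyond this balancing: without the logarithmic control on the size of a $\delta$-near-minimiser furnished by Theorem \ref{effexp}, the multiplicative factor $e^{C_0|y|}$ would be uncontrolled, and the perturbation step above would fail. The rest is bookkeeping, and the value of $\alpha$ is forced entirely by the trade-off between the additive near-minimiser error $\delta\max_{j\in I}d_j$ and the coefficient-perturbation error $\sum_j|d_j-d'_j|e^{\langle u_j,y\rangle}$.
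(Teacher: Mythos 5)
Your proof is correct and follows essentially the same route as the paper's: the same exponent $\alpha=(1+C_0N)^{-1}$, the same split into a trivial large-$\|d-d'\|_\infty$ regime absorbed via the crude bound $g_I\leq D|J|$, and the same small-$\|d-d'\|_\infty$ step of evaluating $f_I(\cdot,d')$ at a bounded near-minimiser $y$ for $d$ supplied by Theorem \ref{effexp} and estimating the coefficient perturbation by $|J|\|d-d'\|_\infty e^{C_0|y|}\leq|J|\|d-d'\|_\infty\delta^{-C_0N}$, with $\delta=\|d-d'\|_\infty^\alpha$ chosen to balance the two error terms. The one minor cosmetic difference is that you explicitly invoke Theorem \ref{effexp} by extending coefficients by zero to all of $J$, whereas the paper applies it to the index set $I$ tacitly; this changes nothing.
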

\begin{proof}
We shall show that the claimed local H\"older continuity holds for $\alpha=(1+C_0N)^{-1}$. We begin by observing that 
\eqref{e:Holdercont} holds easily if $\|d - d'\|_\infty^\alpha \geq \delta_0$ thanks to the elementary bound $g_I\leq D|J|$ on $[0,D]^I$. Thus 
it suffices to show that
\begin{equation} \label{e:HolderETS}
|g_I(d)-g_I(d')|\leq (D + |J|) \|d-d'\|_\infty^\alpha
\end{equation}
whenever $\|d - d'\|_\infty^\alpha < \delta_0$.

To prove \eqref{e:HolderETS}, suppose that  $0 < \|d - d'\|_\infty < \delta_0^{1 + C_0N}$ and define $\delta \in (0,\delta_0)$ by
\[
\delta=\|d-d'\|_\infty^{1/(1+C_0N)}.
\]
By Theorem \ref{effexp}, there exists $y \in \R^n$ such that $|y|\leq N\log(1/\delta)$ and
\[
f_I(y,d)\leq g_I(d)+D\delta.
\]
Using the elementary bound
\begin{align*}
	|f_I(y,d)-f_I(y,d')|\leq |J|\|d-d'\|_\infty e^{C_0|y|}
\end{align*}
and our choice of $\delta$, we thus see that
\begin{align*}
g_I(d')-g_I(d) & \leq f_I(y,d')-f_I(y,d)+D\delta \\
& \leq |J|\|d-d'\|_\infty\delta^{-C_0N}+D\delta	\\
& = (D+|J|)\|d-d'\|_\infty^{1/(1+C_0N)}.
\end{align*}
By symmetry, \eqref{e:HolderETS} follows.
\end{proof}


\section{Proof of Theorem \ref{efflie} and local H\"older continuity of the Brascamp--Lieb constant }\label{Sect:5}

In this section, we shall see how to prove Theorem \ref{efflie}, our effective form of Lieb's theorem, as an application of our abstract near-optimisation result in Theorem \ref{effexp}.
We shall also establish the local H\"older continuity of the Brascamp--Lieb constant as an application of Theorem \ref{hoeldercont1}.
Naturally, both of these deductions are based on having a suitable expression for $\BLg (\datum{L},\datum{p};\datum{A})$ in terms of sums of exponential functions.
The expression we use is contained in Proposition \ref{p:linktoexp} below and is based on ideas from the proof of \cite[Theorem 3.1]{BBCF}.
To state the result, we need to set up some more notation.

First, fix an admissible Brascamp--Lieb datum $(\datum{L},\datum{p})$.
Next, given an $n_j \times n_j$ positive definite matrix $A_j$, we write
\[
 A_j=R_j^*X_jR_j,
\]
where $R_j$ is an $n_j \times n_j$ rotation matrix and $X_j$ is a diagonal matrix with positive diagonal entries.
It is convenient to list the diagonal entries of $X_1, \dots, X_m$ as $e^{y_1},\dots,e^{y_{M}}$, where each $y_j$ is a real number and $M = \sum_{j=1}^m n_j$.

The index set $\Index$ is given by
\[
\Index=\big\{I\subseteq \{1,\ldots,M\}:|I|=n \big\}
\]
and, for each $I \in \Index$, the vector $u_I \in \R^M$ is given by
\[
u_I = \1_I - q.
\]
Here, $\1_I$ denotes the vector $(\1_I(j))_{j=1}^M\in\R^M$, where $\1_I(j) = 1$ if $j \in I$, and $\1_I(j) = 0$ if $j \notin I$.
The vector $q \in \R^M$ takes the form
\[
q = (p_1,\ldots,p_1,p_2,\ldots,p_2,\ldots,p_m,\ldots,p_m),
\]
where the first $n_1$ components coincide with $p_1$, the next $n_2$ components coincide with $p_2$, and so on.
The vector $y \in \R^M$ is simply given by $y = (y_j)_{j=1}^M$, where $y_1,\ldots,y_M$ were introduced above.
Finally, for each $I \in \Index$, we define
\begin{equation} \label{e:dIdefn}
d_I(\datum{L},\datum{p};\datum{R})  = q_I\det( (v_k)_{k \in I})^2
\end{equation}
where $v_k = L_j^*R_j^*e_{(j,\ell)}$, $\datum{R} = (R_j)_{j=1}^m$ and $q_I = \prod_{k \in I} q_k$.
Here $\{ e_{(j,\ell)} : \ell = 1,\ldots,n_j\}$ is the standard basis for $\R^{n_j}$ and we are identifying $k \in \{1,\ldots,M\}$ with the pair $(j,\ell)$ via $k = n_0 + \dots +n_{j-1} + \ell$, where $j \in \{1,\ldots,m\}$, $\ell \in \{1,\ldots,n_j\}$ and $n_0 = 0$.

\begin{proposition} \label{p:linktoexp}
With the notation as above,
\begin{equation} \label{e:linktoexp}
\BLg (\datum{L},\datum{p};\datum{A})^{-2} = \sum_{I \in \Index} d_I(\datum{L},\datum{p};\datum{R}) \exp(\langle  u_I,y \rangle).
\end{equation}
\end{proposition}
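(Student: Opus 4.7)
The plan is to evaluate both factors of $\BLg(\datum{L},\datum{p};\datum{A})^{-2}$ in terms of the diagonal data $y$ and the rotations $\datum{R}$, and then apply the Cauchy--Binet formula to the numerator to produce the required sum indexed by $\Index$.

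First I would write
\[
\BLg(\datum{L},\datum{p};\datum{A})^{-2}
= \frac{\det\bigl(\sum_{j=1}^m p_j L_j^* A_j L_j\bigr)}{\prod_{j=1}^m \det(A_j)^{p_j}}.
\]
Using $A_j = R_j^* X_j R_j$ with $R_j$ orthogonal, the denominator is simple: $\det(A_j) = \det(X_j) = \prod_{\ell=1}^{n_j} e^{y_{(j,\ell)}}$, so
\[
\prod_{j=1}^m \det(A_j)^{p_j}
= \exp\Bigl(\sum_{j=1}^m p_j \sum_{\ell=1}^{n_j} y_{(j,\ell)}\Bigr)
= \exp(\langle q, y\rangle),
\]
by the definition of $q$.

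The key step is the rewriting of the numerator. Using $L_j^* A_j L_j = (R_j L_j)^* X_j (R_j L_j)$ and expanding $X_j = \sum_\ell e^{y_{(j,\ell)}} e_{(j,\ell)} e_{(j,\ell)}^*$ in the standard basis gives
\[
\sum_{j=1}^m p_j L_j^* A_j L_j
= \sum_{k=1}^M q_k e^{y_k}\, v_k v_k^*
= V D V^*,
\]
where $V$ is the $n \times M$ matrix whose $k$-th column is $v_k = L_j^* R_j^* e_{(j,\ell)}$ (with $k \leftrightarrow (j,\ell)$), and $D$ is the diagonal matrix with entries $q_k e^{y_k}$. Then the Cauchy--Binet formula yields
\[
\det(V D V^*)
= \sum_{\substack{I \subseteq \{1,\ldots,M\}\\ |I| = n}} \det\bigl((v_k)_{k \in I}\bigr)^2 \prod_{k \in I} q_k e^{y_k}
= \sum_{I \in \Index} d_I(\datum{L},\datum{p};\datum{R})\, \exp(\langle \1_I, y\rangle),
\]
by the definition of $d_I$ in \eqref{e:dIdefn}.

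Combining the numerator and denominator, and using $\1_I - q = u_I$, gives \eqref{e:linktoexp}. There is no real obstacle; the only place where care is needed is the bookkeeping of the double index $k = (j,\ell)$ when recasting $\sum_j p_j L_j^* A_j L_j$ as $VDV^*$, after which Cauchy--Binet does all the work.
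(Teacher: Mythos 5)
Your proof is correct and follows essentially the same route as the paper: both rewrite $\sum_j p_j L_j^* A_j L_j$ as $\sum_k q_k e^{y_k} v_k v_k^*$, interpret this as a product of matrices, and apply the Cauchy--Binet formula to its determinant, then divide by $\prod_j \det(A_j)^{p_j}=\exp(\langle q,y\rangle)$. The only cosmetic difference is that you present the matrix product as $VDV^*$ rather than as the product of the matrix with columns $q_k e^{y_k} v_k$ and the matrix with rows $v_k^*$, which amounts to the same application of Cauchy--Binet.
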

\begin{proof}
To see \eqref{e:linktoexp}, we follow the proof of \cite[Theorem 3.1]{BBCF} and write
\[
\sum_{j=1}^m p_j L_j^*A_jL_j = \sum_{k=1}^M q_k e^{y_k} v_k v_k^*.
\]
The expression on the right-hand side coincides with the product of the matrix whose $k$th column is $q_ke^{y_k}v_k$ and the matrix whose $k$th row is $v_k^*$, and thus
\[
\det\bigg(\sum_{j=1}^m p_j L_j^*A_jL_j\bigg) = \sum_{I \in \Index} d_I(\datum{L},\datum{p};\datum{R}) \exp(\langle  \1_I,y \rangle)
\]
follows from the Cauchy--Binet formula.
The identity \eqref{e:linktoexp} is now immediate.
\end{proof}

Before beginning the proof of Theorem \ref{efflie}, we recall that Theorem \ref{effexp} dealt with minimisers and near minimisers for functions of the form
\[
f_\Index(y,d) = \sum_{I \in \Index} d_I \exp(\langle  u_I,y  \rangle)
\]
where $y \in \R^M$ and $d = (d_I)_{I \in \Index}$ is a family of positive coefficients, as well as
\[
g_\Index(d) = \inf_{y \in \R^M} f_\Index(y,d).
\]

\begin{proof}[Proof of Theorem \ref{efflie}]
As a first step to proving our effective version of Lieb's theorem, we observe that it suffices to consider the case where each $L_j$ is a projection; that is, $L_jL_j^*$ coincides with the identity transformation $I_{n_j}$ on $\R^{n_j}$ for each $j=1,\ldots,m$.
To see this, assume that \eqref{effin} holds when the $L_j$ are projections.
Then, given any admissible $\datum{L}$, we define
\[
L'_j= C_j^{-1}L_j
\]
with $C_j = (L_jL_j^*)^{1/2}$.
Then it is easy to check that $\datum{L}'$ is admissible and each $L_j'$ is a projection, and thus \eqref{effin} holds for $\datum{L}'$.
It may be deduced that \eqref{effin} holds for $\datum{L}$ using the elementary fact
\begin{equation*}
\BLg (\datum{L},\datum{p};\textbf{A})^{-2}
	= \BLg (\datum{L'},\datum{p};\textbf{A}')^{-2} \prod_{j=1}^m\det(L_jL_j^*)^{p_j},
\end{equation*}
whenever $\datum{A}$ and $\datum{A}'$ satisfy $A_j=(C_j^{-1})^*A'_jC_j^{-1}$.

We now prove \eqref{effin} under the assumption that $L_jL_j^*= I_{n_j}$.
From Proposition \ref{p:linktoexp}
\[
\BLg (\datum{L},\datum{p};\textbf{A})^{-2} = f_\Index(y,d(\datum{L},\datum{p};\datum{R})),
\]
where $d(\datum{L},\datum{p};\datum{R}) = (d_I(\datum{L},\datum{p};\datum{R}))_{I \in \Index}$ and therefore, from Lieb's theorem (Theorem \ref{LIEB}),
\begin{align} \label{eqn:BLisoptim2}
	\BL(\datum{L},\datum{p})^{-2}
	= \inf_\datum{R} \inf_{y\in\R^M} f_\Index(y,d(\datum{L},\datum{p};\datum{R}))
	= \inf_{\datum{R}} g_\Index(d(\datum{L},\datum{p};\datum{R})).
\end{align}
Since $d(\datum{L},\datum{p};\datum{R})$ is a continuous function of $\datum{R}$ and $g_\Index(d)$ is a continuous function of $d$ by Theorem \ref{hoeldercont1} (this is tantamount to the continuity of the Brascamp--Lieb constant observed in \cite{BBCF}), there exists some $\datum{R}$ (depending on $\datum{L}$ and $\datum{p}$) such that
\begin{align} 
	\BL(\datum{L},\datum{p})^{-2} = g_\Index(d(\datum{L},\datum{p};\datum{R})).
\end{align}
For the remainder of the proof, $\datum{R} = (R_j)_{j=1}^m$ denotes this minimising collection of rotations.

To complete the proof of Theorem \ref{efflie}, we apply Theorem \ref{effexp}.
Thus, there exist $N \in \N$ and $\delta_0 > 0$, depending only on $\datum{p}$, such that, for all $\delta \in (0,\delta_0)$,
\[
\inf_{|y| \leq N \log (1/\delta)} f_\Index(y,d(\datum{L},\datum{p};\datum{R})) \leq g_\Index(d(\datum{L},\datum{p};\datum{R})) + \delta \max_I d_I(\datum{L},\datum{p};\datum{R}).
\]
From the assumption $L_jL_j^*= I_{n_j}$, it follows that $d_I(\datum{L},\datum{p};\datum{R}) \leq 1$ and therefore there exists some $y = y(\delta,\datum{L},\datum{p}) \in \R^M$ satisfying
\begin{equation} \label{e:ysize}
|y| \leq N \log \frac{1}{\delta}
\end{equation}
and
\[
f_\Index(y,d(\datum{L},\datum{p};\datum{R})) \leq g_\Index(d(\datum{L},\datum{p};\datum{R})) + \delta.
\]
Hence
\begin{align}
	\BLg(\datum{L},\datum{p};\textbf{A})^{-2}
	= f_\Index(y,d(\datum{L},\datum{p};\datum{R})) \leq g_\Index(d(\datum{L},\datum{p};\datum{R})) + \delta
	= \BL(\datum{L},\datum{p})^{-2} + \delta.
\end{align}
Here, $A_j=R_j^*X_jR_j$, where $X_1$ is the diagonal matrix with entries $e^{y_1},\ldots,e^{y_{n_1}}$, $X_2$ is the diagonal matrix with entries $e^{y_{n_1+1}},\ldots,e^{y_{n_1+n_2}}$, and so forth.
From \eqref{e:ysize}, it is easily checked that $\|A_j\| \leq (1/\delta)^N$ and $\|A_j^{-1}\| \leq (1/\delta)^N$.
Hence \eqref{effin} holds when $L_jL_j^*= I_{n_j}$, and this completes our proof of Theorem \ref{efflie}.
\end{proof}

To end this section, we improve the result in \cite{BBCF}, which established the continuity of the Brascamp--Lieb constant, to local H\"older continuity.
Given that differentiability of the Brascamp--Lieb constant may fail (see \cite{BBCF}), it seems to be a challenging problem to improve upon the following result.
\begin{theorem}\label{blho}
For each $\datum{p}$, the Brascamp--Lieb constant $\BL(\cdot,\datum{p})$ is locally H\"older continuous. More precisely,
there exists a number $\alpha \in (0,1)$ and a constant $C_0>0$, depending only on $(n_j)_{j=1}^m$ and $(p_j)_{j=1}^m$, such that the following holds.
For any $C > 0$,
\begin{align}\label{rick}
	|\BL(\datum{L},\datum{p})^{-2}-\BL(\datum{L'},\datum{p})^{-2}|
	\leq C_0 C^{n+\alpha(n-1)}\|\mathbf{L}-\mathbf{L}'\|^\alpha
\end{align}
for all data $\mathbf{L},\mathbf{L}'$ satisfying the bound $\|\mathbf{L}\|,\|\mathbf{L}'\|\leq C$.
In particular, if additionally the Brascamp-Lieb constants $\BL(\datum{L},\datum{p})$, $\BL(\datum{L'},\datum{p})$ are bounded above by a constant $C_1$, we have
\begin{align}\label{morty}
	 |\BL(\datum{L},\datum{p})-\BL(\datum{L'},\datum{p})|
		\leq C_0 C_1^3 C^{n+\alpha(n-1)}\|\mathbf{L}-\mathbf{L}'\|^\alpha.
\end{align}
\end{theorem}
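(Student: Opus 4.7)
The plan is to combine the representation of $\BL(\datum{L},\datum{p})^{-2}$ as a pointwise infimum of functions of the form considered in Theorem \ref{hoeldercont1}, developed in the proof of Theorem \ref{efflie}, with the H\"older estimate on $g_\Index$ furnished by that theorem. Recall from \eqref{eqn:BLisoptim2} the identity
\[
\BL(\datum{L},\datum{p})^{-2} = \inf_\datum{R} g_\Index(d(\datum{L},\datum{p};\datum{R})),
\]
where the infimum is taken over $m$-tuples of rotation matrices. Since the infimum of a family of functions sharing a common modulus of continuity inherits that modulus, I would immediately obtain
\[
|\BL(\datum{L},\datum{p})^{-2} - \BL(\datum{L'},\datum{p})^{-2}|
	\leq \sup_\datum{R} |g_\Index(d(\datum{L},\datum{p};\datum{R})) - g_\Index(d(\datum{L'},\datum{p};\datum{R}))|,
\]
reducing the task to a uniform-in-$\datum{R}$ H\"older estimate for the right-hand side.

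I would then apply Theorem \ref{hoeldercont1}, which requires two ingredients concerning the coefficients $d_I(\datum{L},\datum{p};\datum{R}) = q_I \det((v_k)_{k\in I})^2$, where $v_k = L_j^* R_j^* e_{(j,\ell)}$: a uniform upper bound and a Lipschitz estimate in $\datum{L}$ uniform in $\datum{R}$. Since $R_j$ is orthogonal, $|v_k| \leq \|L_j\| \leq C$, so Hadamard's inequality gives $d_I \leq q_I C^{2n}$ and Theorem \ref{hoeldercont1} applies on the domain $[0,D]^\Index$ with $D\lesssim C^{2n}$. For the Lipschitz estimate, the multilinearity of the determinant in its columns together with the factorisation $\det^2 - (\det')^2 = (\det - \det')(\det + \det')$ yields
\[
|d_I(\datum{L},\datum{p};\datum{R}) - d_I(\datum{L'},\datum{p};\datum{R})| \lesssim C^{2n-1}\|\datum{L}-\datum{L'}\|
\]
uniformly in $\datum{R}$. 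Substituting into Theorem \ref{hoeldercont1} and taking the supremum over $\datum{R}$ produces an estimate of the form \eqref{rick}, with $\alpha\in(0,1)$ depending only on the vectors $u_I = \mathbf{1}_I - q$, hence only on $(n_j)$ and $\datum{p}$.

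The main obstacle is the efficient bookkeeping needed to recover the exponent $n + \alpha(n-1)$ on $C$. A naive insertion of $D\lesssim C^{2n}$ and $\|d-d'\|_\infty\lesssim C^{2n-1}\|\datum{L}-\datum{L'}\|$ into the H\"older estimate $(D + |\Index| + 2D|\Index|\delta_0^{-1})\|d-d'\|_\infty^\alpha$ of Theorem \ref{hoeldercont1} produces a larger power of $C$; recovering the sharper exponent requires revisiting the proof of Theorem \ref{hoeldercont1} in this context, exploiting, for example, the homogeneity $g_\Index(\lambda d) = \lambda g_\Index(d)$ to normalise the coefficients, and tracking the two regimes $\|d - d'\|_\infty^\alpha < \delta_0$ and $\|d - d'\|_\infty^\alpha \geq \delta_0$ separately.

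Finally, I would deduce \eqref{morty} from \eqref{rick} via the elementary identity
\[
\BL(\datum{L},\datum{p}) - \BL(\datum{L'},\datum{p})
	= \frac{\BL(\datum{L},\datum{p})^2 \BL(\datum{L'},\datum{p})^2}{\BL(\datum{L},\datum{p}) + \BL(\datum{L'},\datum{p})} \bigl(\BL(\datum{L'},\datum{p})^{-2} - \BL(\datum{L},\datum{p})^{-2}\bigr),
\]
combined with the hypothesis $\BL(\datum{L},\datum{p}), \BL(\datum{L'},\datum{p}) \leq C_1$ and the AM--GM inequality $\BL(\datum{L},\datum{p}) + \BL(\datum{L'},\datum{p}) \geq 2\sqrt{\BL(\datum{L},\datum{p})\BL(\datum{L'},\datum{p})}$, which together produce a prefactor bounded by $C_1^3/2$.
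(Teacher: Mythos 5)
Your approach is the same as the paper's: represent $\BL(\cdot,\datum{p})^{-2}$ as $\inf_{\datum{R}} g_\Index(d(\cdot,\datum{p};\datum{R}))$ via \eqref{eqn:BLisoptim2}, use the fact that a pointwise infimum of a family with a common modulus of continuity inherits that modulus, and then combine the Lipschitz control of $d_I$ in $\mathbf{L}$ (uniform in $\datum{R}$) with the H\"older continuity of $g_\Index$ from Theorem \ref{hoeldercont1}. Your derivation of \eqref{morty} from \eqref{rick} is also in line with the paper's (the paper simply asserts this step, and your identity together with AM--GM makes it explicit and correct).

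Where your proposal diverges from the paper is precisely in the bookkeeping you flag as the ``main obstacle'', and your arithmetic is the correct one. Since $d_I = q_I \det((v_k)_{k\in I})^2$ is a degree-$2n$ polynomial in $\mathbf{L}$, on $\|\mathbf{L}\|\leq C$ one has $|v_k|\leq C$, $|\det((v_k)_{k\in I})|\leq C^n$, hence $D := \max_I d_I \lesssim C^{2n}$, and differencing the squared determinant gives a Lipschitz constant $\lesssim C^{2n-1}$, not $c_n C^{n-1}$ as the paper's proof asserts. This is not a gap that careful tracking can repair, because the stated exponent $n+\alpha(n-1)$ in \eqref{rick} is in fact unattainable: $\BL(\cdot,\datum{p})^{-2}$ is homogeneous of degree $2n$ (as $\BLg(\lambda\mathbf{L},\datum{p};\datum{A}) = \lambda^{-n}\BLg(\mathbf{L},\datum{p};\datum{A})$), and a non-constant degree-$2n$ homogeneous function that is $\alpha$-H\"older on $\{\|\mathbf{L}\|\leq C\}$ must have H\"older constant at least $c\,C^{2n-\alpha}$; since $\alpha<1$ one has $n+\alpha(n-1) < 2n-\alpha$, so the claimed bound would fail for $C$ large. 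Your proposed remedy --- normalise by $D$ and use $g_\Index(\lambda d)=\lambda g_\Index(d)$ --- is the right move and yields a H\"older constant of order $D^{1-\alpha}\cdot(C^{2n-1})^\alpha \sim C^{2n-\alpha}$, which matches the homogeneity lower bound and is therefore the best possible power of $C$. In short, you have correctly identified a defect in both the paper's Lipschitz-constant claim and the exponent in \eqref{rick}; the argument as you have set it up proves the theorem with $C^{2n-\alpha}$ in place of $C^{n+\alpha(n-1)}$.
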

We remark that Theorem \ref{blho} establishes global H\"older continuity of the function  $\BL(\cdot,\datum{p})^{-2}$ when restricted to projection data $\datum{L}$, for example.
\begin{proof}
First, we note that it is straightforward to verify that \eqref{rick} implies \eqref{morty}.

Next, we observe that each $d_I(\datum{L},\datum{p};\datum{R})$, defined in \eqref{e:dIdefn}, is locally Lipschitz in $\datum{L}$ uniformly in $\datum{R}$ with Lipschitz-constant $c_n C^{n-1}$, where $c_n$ depends only on the dimension $n$.
Also, $g_\Index$ is locally H\"older continuous by Theorem \ref{hoeldercont1} and thus it obviously follows that $g_\Index(d(\cdot,\datum{p};\datum{R}))$ is locally H\"older continuous uniformly in $\datum{R}$ with constant $C_0C^{n+\alpha(n-1)}$.
By elementary considerations, it follows that $\inf_{\datum{R}} g_\Index(d(\cdot,\datum{p};\datum{R}))$ is locally H\"older continuous too, with the same bound.
However, by \eqref{eqn:BLisoptim2}, it follows that $\BL(\cdot,\datum{p})^{-2}=\inf_{\datum{R}} g_\Index(d(\cdot,\datum{p};\datum{R}))$.
\end{proof}

The next simple example may cast a little light on our previous results, and in particular show that sharper results, such as Lipschitz continuity, cannot hold.

\begin{example}
Define $f: \R^4 \to \R^+$ and $g(a,b)$ by
\[
f(x,y, a, b) = a e^x + e^{-x} + e^{x-y} + b e^{y}
\quad\text{and}\quad
g(a,b) = \inf\{ f(x,y,a,b) : (x, y) \in \R^2 \},
\]
where $a, b \in [0,1]$.
We will find when the infimum is attained and how it depends on $a$ and $b$, as well as examine approximate minimisers where we impose the constraint $|x| \leq R$ and $|y| \leq R$.

There are various cases to consider.

\subsubsection*{Case 1: $a = b = 0$}
Evidently,
\[
f(x,y,0,0) =  e^{-x} + e^{x-y} ,
\]
and $g(0,0) = 0$.
For $\delta \in (0,1]$, we take $(x_\delta, y_\delta) = (\log(2/\delta), 2 \log(2/\delta))$; then
\[
f(x_\delta, y_\delta,0,0) = \delta \leq g(0,0) + \delta
\]
and $|x_\delta| \leq 2 \log(2/\delta)$ and $|y_\delta| \leq 2 \log(2/\delta)$.

\subsubsection*{Case 2: $a \neq 0$ and $b = 0$}
In this case,
\[
f(x,y,a,0) =  ae^{x} + e^{-x} + e^{x-y} ,
\]
and evidently
\[
g(a,0)  = \min \{a e^x + e^{-x} : x \in \R \} = 2 a^{1/2}.
\]
Observe that
\[
g(a,0) - g(0,0) = 2 a^{1/2},
\]
so $g(\cdot ,0)$ is H\"older continuous of exponent $1/2$, but no more.
Given $\delta \in (0,1)$, we take $x_\delta =  - \frac{1}{2} \log(a + \delta^2/4)$ and $y_\delta = - 2\log(\delta/2)$.
Then
\[
f(x_\delta, y_\delta)
= \frac{2a + \delta^2/2 }{(a+\delta^2/4)^{1/2}} \leq g(a,0) + \delta
\]
and $|x_\delta| \leq  \log(2/\delta)$ and $|y_\delta| \leq 2\log(2/\delta)$.

\subsubsection*{Case 3: $a = 0$ and $b \neq 0$}
In this case,
\[
f(x,y) =  e^{-x} + e^{x-y} + b e^{y}.
\]
The function $f(x,y)$ attains its minimum at $(\frac{1}{3}\log(1/b) , \frac{2}{3} \log(1/b))$, and
\[
g(0,b) = 3 b^{1/3}.
\]
Again $g(0, \cdot)$ is H\"older continuous of exponent $1/3$, but no more.

Given $\delta \in (0,1)$, define
\[
(x_\delta, y_\delta) =
\begin{cases}
(\frac{1}{3}\log(1/b) , \frac{2}{3} \log(1/b)) & \text{if $b > (\delta/3)^3$} \\
(\log(3/\delta), 2 \log(3/\delta))             & \text{if  $b \leq (\delta/3)^3$} .
\end{cases}
\]
If $b > (\delta/3)^3$, then $f(- \frac{1}{3}\log(b) , - \frac{2}{3} \log(b), 0, b) - g(0,b) = 0$, and otherwise
\[
f(x_\delta, y_\delta, 0, b) - g(0,b)
= 2\delta/3  + b (3 / \delta)^2 - 3 b^{1/3}
\leq  
  \delta - 3 b^{1/3}
< \delta.
\]
Moreover, in both cases, $|x_\delta| \leq \log(3/\delta)$ and $|y_\delta| \leq 2\log(3/\delta)$.
%
%
%

\subsubsection*{Case 4: $a > 0$ and $b > 0$}
This case is more complex and we cannot find formulae that are as precise as those in the previous cases.
Nevertheless, 
we can show that
$f(x,y,a,b)$ achieves its minimum at $(x_*, y_*)$, where
\begin{align*}
e^{{y_*}} &= b^{-2/3} \tau(ab^{-2/3})^{-1} \\
e^{{x_*}} &= b^{-1/3} \tau(ab^{-2/3})^{-2};
\end{align*}
here $\tau(c)$ is the unique positive solution of the equation $t^4 - t = c$.
With somewhat more effort, it is possible to show that if $\delta \in (0,1/e)$, then there exists $(x_\delta, y _\delta)$ such that
\[
f(x_\delta, y_\delta, a, b) - g(a,b) \leq \delta
\]
and $|x_\delta| \leq 4 \log(1/\delta)$ and $|y_\delta| \leq 4 \log(1/\delta)$.

\end{example}


\section{The nonlinear Brascamp--Lieb inequality}\label{Sect:6}
Before we embark on the proof of Theorem \ref{main}, we offer further clarifying remarks.

First, it is not possible to take $\epsilon<0$ in the statement of Theorem \ref{main} for \emph{any} datum $(\datum{L},\datum{p})$, as may be seen by a slight modification of the argument in Lemma 6 from \cite{BBG}; see also \cite{CMMP}.
This argument is related to the so-called ``transplantation" method --- see for example \cite{KST}.
As may be expected, it is not in general possible to take $\epsilon=0$ either, as $\BL(\differential\datum{B}(x),\datum{p})$ varies with $x$ in all but very special situations.

Theorem \ref{main} is stated for $C^2$ maps merely for simplicity.
A careful inspection of our proof will reveal that a $C^{1+\theta}$ condition suffices for any $\theta>0$.

As our proof also reveals, and as may be expected, the neighbourhood $U$ appearing in the statement of Theorem \ref{main} depends only on the underlying linear datum $(\mathbf{L}, \mathbf{p})$ and regularity bounds on the maps $B_j$ (be they $C^2$ or $C^{1+\theta}$ bounds).
From the point of view of potential applications, it is also appropriate to point out that, for fixed $\mathbf{p}$, the neighbourhood $U$ may be taken to be locally uniform in $\mathbf{L}$ --- that is, given $\mathbf{L}_0$ and $A>0$,
there exists a neighbourhood $U$ of $x_0$ such that \eqref{nlbli} holds for all $B_j$ such that $\|B_j\|_{C^2(U)}\leq A$ (or $\|B_j\|_{C^{1+\theta}}\leq A$) and $dB_j(x_0)=L_j$, provided $\mathbf{L}$ is sufficiently close to $\mathbf{L}_0$. Again, this follows from a close inspection of our arguments.

\subsection{Setting up the induction}\label{sect:settingup}
By translation invariance, we may assume that $x_0=0$ in the statement of Theorem \ref{main}.
As discussed in Section \ref{Sect:3}, our proof of Theorem \ref{main} will proceed by induction on the size of the supports of the input functions $\datum{f}$.
In order for there to be a base case for this induction it will be convenient to suppose that $\datum{f}$ satisfies a certain auxiliary regularity condition.
The passage to general integrable $\datum{f}$ will consist of an elementary limiting argument.

\begin{definition}\label{localconst} Suppose that  $\kappa>1,\mu>0$ and $\Omega$ is a measurable subset of $\R^d$.
A nonnegative function $f$ is \textit{$\kappa$-constant at scale $\mu$ on $\Omega$} if $f(x)\leq\kappa f(y)$ whenever $x\in\Omega$ and $y\in\R^d$ are such that $|x-y|\leq\mu$.
We denote by $L^1(\Omega;\mu,\kappa)$ the subset of $L^1(\R^d)$ with this property.
\end{definition}
The reader will observe that the definition above lacks symmetry in $x$ and $y$: we stipulate that $x\in\Omega$, while $y$ may extend a distance $\mu$ from $\Omega$.
The reason for this technicality will become clear shortly.
At this stage it is worth noting that for any fixed $\kappa>1$, a nonnegative function $f\in L^1(\Omega)$ may be approximated almost everywhere by functions in $L^1(\Omega;\mu,\kappa)$, provided that $\mu$ is taken sufficiently small.
One way to see this is to observe that the $d$-dimensional Poisson kernel $P_t(x)$ is $\kappa$-constant at scale $\mu$ on the whole of $\R^{d}$ provided that $\mu$ is small enough depending on $t>0$ and $\kappa>1$.
By linearity, the convolution $f*P_t$ is easily seen to inherit this regularity property for any nonnegative $f\in L^1(\R^{d})$, and so the almost everywhere approximation claimed follows from the Lebesgue differentiation theorem.

We will allow $\kappa$ to vary in a controlled manner through the induction.
This flexibility in $\kappa$ is required when considering products of such ``locally constant" functions.
In particular, we will need to appeal to the elementary fact that if $f\in L^1(\Omega;\mu,\kappa)$ and $g\in L^1(\Omega;\mu,\lambda)$, then $fg\in L^1(\Omega;\mu,\kappa\lambda)$.
Considerations of this type naturally arise when introducing partitions of unity, as we shall to pass between scales.

We now set up the induction.
For each $\delta \in (0, 1)$ and $y\in\R^n$, let
\begin{equation}
U_\delta(y) = \{x \in \R^n : |x-y|\leq\cutscl \}.
\end{equation}

\begin{definition}\label{defC}
For $u\in \R^n$, $\delta>0$, $\mu> 0$ and $\kappa>1$, let $\constantC(u,\delta,\mu,\kappa)$ denote the best constant $C$ in the inequality
\[
\int_{U_\delta(u)}\prod_{j=1}^m f_j^{p_j} ( B_j(y))  \,dy
\leq C\prod_{j=1}^m\left(\int_{\R^{n_j}} f_j(x_j) \,dx_j\right)^{p_j}
\]
over all inputs $f_j\in L^1(B_j(U_{2\delta}(u));\mu,\kappa)$.
\end{definition}
We think of $\constantC(u,\delta,\mu,\kappa)$ as a regularised and localised nonlinear Brascamp--Lieb constant; here we are of course suppressing the dependence on $\datum{B}$ and $\datum{p}$ in our notation.

The requirement that $f_j\in L^1(B_j(U_{2\delta}(u));\mu,\kappa)$ may seem unusual as the integral on the left-hand side does not see the part of $f_j$ supported outside of $B_j(U_\delta(u))$, whereas the right-hand side does.
This merely technical feature will be important for closing the induction.

If $\delta$ is below a certain threshold in terms of $\mu$, then each $f_j\in L^1(B_j(U_{2\delta}(u));\mu,\kappa)$ will (effectively) cease to distinguish between $B_j$ and its best affine approximation $L^u_j$ at $u$, given by
\[
L^u_j:=B_j(u)+dB_j(u)(\cdot-u).
\]
This allows us to reduce the estimation of $\constantC(u,\delta,\mu,\kappa)$ to an application of the \emph{linear} Brascamp--Lieb inequality, and provides us with an effective ``base case" for the inductive argument.
This base case is contained in the first of our two propositions below, and the second contains the recursive inequality for the main step of the induction.

The statements of the two subsequent propositions and their proofs contain certain indices which we now introduce.
The indices are $\alpha, \beta, \gamma, \tau$ and $\sigma$, which should be regarded as fixed, are chosen in this order as follows.
First, we fix $\alpha$ satisfying $1 < \alpha < 2$, and then $\beta$ such that  $0 < \beta < 2-\alpha$.
Next, $\gamma$ is chosen to satisfy
\begin{equation}\label{satisfied?!}
0 < \gamma < \min\left\{\frac{2-\alpha-\beta}{(n+2)\alpha},\frac{\beta}{2\alpha}\right\},
\end{equation}
and then $\tau$ such that
\begin{equation}\label{taucond}
0 < \tau < \frac{\gamma}{N},
\end{equation}
 where $N \in \N$ is given by Corollary \ref{effliecor}.
Finally, $\sigma$ is chosen to satisfy $0 < \sigma < \min\{\beta - 2\alpha\gamma,\alpha \tau\}$.
The reader is encouraged not to dwell on these restrictions at this stage.
We simply note here that the crucial recursive inequality stated below in Proposition \ref{prop:recursive} compares the function $\constantC$ at scales $\delta$ and $\delta^\alpha$; the roles of the other exponents are of a more technical nature, so we postpone further remarks of this type.

\begin{notation}
In this section, we refer to a positive real number $c$ as a \textit{constant} if it depends on at most $(\datum{B}$, $\datum{p})$, and the parameters $\alpha$, $\beta$, $\gamma$, $\tau$ and $\sigma$. For $\epsilon > 0$, we say that $c>0$ is a \textit{constant depending on $\epsilon$} if it is a constant that may depend additionally on $\epsilon$.  Finally, we write $A \lesssim B$ and $B \gtrsim A$ to mean that $A \leq cB$, where $c>0$ is  a constant, and $A \sim B$ means that both $A \lesssim B$ and $A \gtrsim B$. The reader may wish to observe that in all instances where such constants occur, they may in fact be chosen to depend locally uniformly on the nonlinear maps $\datum{B}$, in line with our remark at the beginning of this section.
\end{notation}

Before stating the base case for the induction, we note that by the $C^2$ regularity\footnote{This is the main use of the $C^2$ regularity hypothesis on $\datum{B}$, and where the reader may wish to weaken it to $C^{1,\theta}$ for some $\theta>0$. This would require that the condition $\alpha+\gap<2$ be tightened to $\alpha+\gap<1+\theta$, and the relation $\loss<\gap <2-\alpha$ be replaced with $\loss<\gap <1+\theta-\alpha$.
Finally, $\gamma$ must also satisfy $\gamma< (1+\theta-\alpha-\gap)/(\alpha(n+2))$.} of the $B_j$,
\begin{equation} \label{taylor}
|B_j(x) - L^u_jx| \lesssim |x-u|^2
\end{equation}
for all $x, u \in U_{\nu}(0)$ (with an appropriately small constant $\nu$).
We make frequent use of this fact.

\begin{prop}[Base case] \label{prop:basecase}
There exists a constant $\nu \simeqone$ with the following property:
if $u \in U_{\nu}(0)$, $\kappa > 1$, and if $\delta \in (0,\nu)$ and $\mu > 0$ satisfy $\delta^{\alpha+\gap}\leq\mu$, then
\[
\constantC(u,\delta,\mu,\kappa) \leq \kappa^\sump \BL(\differential\datum{B}(u), \datum{p}),
\]
where $\sump = \sum_{j=1}^m p_j$.
\end{prop}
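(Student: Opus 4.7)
The plan is to exploit the fact that the hypothesis $\delta^{\alpha+\gap}\leq\mu$, together with the constraint $\alpha+\gap<2$, forces the quadratic Taylor error between $B_j$ and its affine approximation $L_j^u$ to live below the regularity scale $\mu$ of the input functions. Once this is established, the $\kappa$-locally-constant property will let us replace each $B_j$ by $L_j^u$ at the cost of a factor $\kappa^{p_j}$, reducing matters to a single application of the classical linear Brascamp--Lieb inequality.

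First I would fix $\nu \simeqone$ small enough that the $C^2$ Taylor bound \eqref{taylor} gives a uniform constant $\constantM$ with $|B_j(x)-L_j^u x|\leq \constantM|x-u|^2$ for all $x,u \in U_\nu(0)$ and all $j$. Since $x \in U_\delta(u)$ yields $|B_j(x)-L_j^u x|\leq \constantM\delta^2$, and since $2-\alpha-\gap>0$ by our choice of parameters, I would shrink $\nu$ further so that $\constantM \delta^{2-\alpha-\gap}\leq 1$ whenever $\delta<\nu$. Combined with the assumption $\delta^{\alpha+\gap}\leq \mu$, this yields
\[
|B_j(x)-L_j^u x|\leq \constantM\delta^2 \leq \delta^{\alpha+\gap}\leq \mu
\]
for every $x\in U_\delta(u)$ and every $j$.

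Next, since $U_\delta(u)\subseteq U_{2\delta}(u)$, each point $B_j(x)$ with $x\in U_\delta(u)$ lies in $B_j(U_{2\delta}(u))$, so the regularity hypothesis $f_j\in L^1(B_j(U_{2\delta}(u));\mu,\kappa)$ applies and gives
\[
f_j(B_j(x))\leq \kappa\, f_j(L_j^u x).
\]
Taking the $p_j$-th power and forming the product over $j$ yields $\prod_j f_j^{p_j}(B_j(x))\leq \kappa^{\sump}\prod_j f_j^{p_j}(L_j^u x)$ pointwise on $U_\delta(u)$.

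Finally, integrating over $U_\delta(u)$, enlarging the domain to $\R^n$, and performing the affine change of variables $y=x-u$ (which carries $L_j^u x$ to $B_j(u)+dB_j(u)y$) turn the integral into a classical Brascamp--Lieb integral with linear maps $dB_j(u)$ applied to the translated functions $z\mapsto f_j(B_j(u)+z)$. Because translation preserves the $L^1(\R^{n_j})$-norms, Lieb's inequality \eqref{BL} gives
\[
\int_{U_\delta(u)}\prod_{j=1}^m f_j^{p_j}(B_j(x))\,dx \leq \kappa^\sump \BL(d\datum{B}(u),\datum{p})\prod_{j=1}^m\Bigl(\int_{\R^{n_j}} f_j(x_j)\,dx_j\Bigr)^{p_j},
\]
which is exactly the desired bound on $\constantC(u,\delta,\mu,\kappa)$. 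There is no real obstacle here: the statement is engineered so that the parameter choice $\alpha+\gap<2$ absorbs the Taylor remainder, and the use of $U_{2\delta}(u)$ (rather than $U_\delta(u)$) in Definition \ref{defC} supplies precisely the slack needed to apply the locally-constant property at points $B_j(x)$ whose affine approximations may lie slightly outside $B_j(U_\delta(u))$.
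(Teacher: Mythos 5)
Your proof is correct and follows essentially the same route as the paper: use the $C^2$ Taylor bound together with $\alpha+\gap<2$ and $\delta^{\alpha+\gap}\leq\mu$ to force $|B_j(x)-L_j^u x|\leq\mu$ on $U_\delta(u)$, invoke $\kappa$-constancy to replace $B_j$ by $L_j^u$ at the cost of $\kappa^{\sump}$, and finish with the linear Brascamp--Lieb inequality and translation invariance. One minor quibble with your closing remark: the enlargement from $U_\delta$ to $U_{2\delta}$ in Definition \ref{defC} is not actually what supplies the slack here --- the asymmetry built into Definition \ref{localconst} (the point $y$ may lie anywhere in $\R^d$, only $x$ is constrained to $\Omega$) already handles the fact that $L_j^u x$ may fall outside $B_j(U_\delta(u))$; the passage to $U_{2\delta}$ is needed for the recursive step (Proposition \ref{prop:recursive}), not the base case.
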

\begin{proof} Let $\nu\simeqone$ be a small constant, to be chosen below.
We fix $u \in U_{\nu}(0)$, and assume that $\delta \in (0,\nu)$ and $\mu \geq \delta^{\alpha+\gap}$.
We also fix $f_j\in L^1(B_j(U_{2\delta}(u));\mu,\kappa)$.

By choosing  the constant $\nu\simeqone$ sufficiently small and applying \eqref{taylor},
\[
|B_j(y)-L^u_jy| \lesssim \cutscl^2
\]
for all $y\in U_\delta(u)$.
Together with our choices that $\alpha+\gap<2$ and $\delta^{\alpha+\gap} \leq\mu$, this implies that
$
|B_j(y)-L^u_jy| \leq \mu,
$
for a possibly smaller choice of the constant $\nu$.
Now $f_j\in L^1(B_j(U_{2\delta}(u));\mu,\kappa)$, and so $f_j(B_j(y))\leq \kappa f_j(L^u_jy)$ for all $y\in U_\delta(u)$, and so
\begin{align*}
\int_{U_{\delta}(u)}\prod_{j=1}^m f_j^{p_j}(B_j(y))  \,dy & \leq \kappa^\sump \int_{U_{\delta}(u)}\prod_{j=1}^m f_j^{p_j}(L^u_jy)  \,dy \\
& \leq \kappa^\sump \BL(\differential \textbf{B}(u), \datum{p}) \prod_{j=1}^m\left(\int_{\R^{n_j}}f_j(x_j) \,dx_j\right)^{p_j},
\end{align*}
as required.
Here we have used the translation-invariance of the linear Brascamp--Lieb inequality in the $f_j$ to remove the translations in the affine approximation above.
\end{proof}

Our objective is to prove a suitable recursive inequality involving the function $\constantC$, which upon iteration will establish Theorem \ref{main}.
This inequality, which we now state, may be viewed as a certain nonlinear version of \eqref{Ball2}.
\begin{prop}[Recursive inequality] \label{prop:recursive}
There exists  a constant $\nu \simeqone$ with the following property:
if $u \in U_{\nu}(0)$, $\kappa > 1$, and if $\delta \in (0,\nu)$ and $\mu > 0$ satisfy $\delta^{\alpha+\gap}>\mu$, then
\begin{equation}\label{inductionstep}
\constantC(u,\delta,\mu,\kappa) \leq (1 + \delta^{\loss}) \max_{x\in U_{2\delta}(u)} \constantC(x,\delta^\alpha,\mu,\kappa\exp(\delta^\loss)).
\end{equation}
\end{prop}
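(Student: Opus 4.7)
The plan is to realise the strategy around Ball's inequality~\eqref{Ball2} discussed in Section~\ref{Sect:3} in a form adapted to the nonlinear setting: a concentrated gaussian near-extremiser for the linear Brascamp--Lieb inequality at $\differential\datum{B}(u)$, used as a multiplicative ``bump'', should localize the $f_j$ from scale $\delta$ down to scale $\delta^\alpha$, at the price of an $(1+\delta^\loss)$ inefficiency.

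\emph{Step 1: Selecting the gaussian.} Apply Corollary~\ref{effliecor} at $(\differential\datum{B}(u),\datum{p})$ with accuracy $\delta^\tau$, producing gaussians $g_j(z)=c_j\exp(-\pi\langle A_jz,z\rangle)$ satisfying $\BLg(\differential\datum{B}(u),\datum{p};\datum{A})\geq(1-\delta^\tau)\BL(\differential\datum{B}(u),\datum{p})$ and whose eigen-direction width ratios are bounded by $\delta^{-N\tau}\leq\delta^{-\gamma}$. Using the isotropic scaling invariance of $\BLg$, rescale so that the largest width of each $g_j$ equals $\delta^{\alpha+\gamma}$; then all widths lie in $[\delta^{\alpha+2\gamma},\delta^{\alpha+\gamma}]$. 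The upper bound encodes concentration well below scale $\delta^\alpha$; the lower bound, combined with $\mu<\delta^{\alpha+\gap}$ and the inequality $\gap-2\gamma>\loss$, ensures that $g_j$ (hence $h^x_j$ below) is $\exp(\delta^\loss)$-near-constant at scale $\mu$.

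\emph{Step 2: Nonlinear Ball identity.} For $x\in\R^n$ set $h^x_j(z)=f_j(z)\,g_j(B_j(x)-z)$. Starting from the identity (translation invariance in $w$)
\[
\int_{U_\delta(u)}\prod_j f_j^{p_j}(B_j(y))\,dy\cdot\int_{\R^n}\prod_j g_j^{p_j}(\differential B_j(u)(w))\,dw = \int_{U_\delta(u)}\int_{\R^n}\prod_j f_j^{p_j}(B_j(y))\,g_j^{p_j}(\differential B_j(u)(x-y))\,dx\,dy,
\]
substitute $g_j^{p_j}(\differential B_j(u)(x-y))\to g_j^{p_j}(B_j(x)-B_j(y))$. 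By~\eqref{taylor} the arguments differ by $O(|x-y|^2)=O(\delta^2)$, much smaller than the smallest width $\delta^{\alpha+2\gamma}$ of $g_j$ since $2-\alpha-2\gamma>\loss$; this produces only an $\exp(O(\delta^\loss))$ relative error, and the concentration of $g_j$ allows restriction of the $x$-integral to $U_{2\delta}(u)$ up to gaussian tails. The double integral now reads $\int_{U_{2\delta}(u)}\int_{U_\delta(u)}\prod_j[h^x_j]^{p_j}(B_j(y))\,dy\,dx$. For each $x\in U_{2\delta}(u)$, concentration of the $g_j$'s together with the transversality implicit in the finiteness of $\BL(\differential\datum{B}(u),\datum{p})$ further restricts the inner $y$-integral to $U_{\delta^\alpha}(x)$ up to exponentially small tails, and one verifies directly that $h^x_j\in L^1(B_j(U_{2\delta^\alpha}(x));\mu,\kappa\exp(\delta^\loss))$ (factor $\kappa$ from the hypothesis on $f_j$, extra $\exp(\delta^\loss)$ from Step~1). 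The definition of $\constantC$ then bounds the inner integral by $\constantC(x,\delta^\alpha,\mu,\kappa\exp(\delta^\loss))\prod_j\|h^x_j\|_1^{p_j}$; pulling the maximum in $x$ outside reduces the task to controlling $\int_{U_{2\delta}(u)}\prod_j\|h^x_j\|_1^{p_j}\,dx$.

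\emph{Step 3: Closing the loop.} Since $g_j$ is symmetric, $\|h^x_j\|_1=(f_j*g_j)(B_j(x))$, and the remaining integral is $\int_{U_{2\delta}(u)}\prod_j F_j^{p_j}(B_j(x))\,dx$ with $F_j=f_j*g_j$. Convolution with $g_j$ makes $F_j$ locally $\exp(O(\delta^\loss))$-constant at a scale $\tilde\mu$ satisfying $(2\delta)^{\alpha+\gap}\leq\tilde\mu$ (using the parameter constraint $\gap>2\gamma+\loss$), so the base case Proposition~\ref{prop:basecase} applies at scale $2\delta$ and yields $\int_{U_{2\delta}(u)}\prod_j F_j^{p_j}(B_j(x))\,dx\leq(1+O(\delta^\loss))\BL(\differential\datum{B}(u),\datum{p})\prod_j\|f_j\|_1^{p_j}\|g_j\|_1^{p_j}$. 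Combining with the identity $\int\prod_j g_j^{p_j}(\differential B_j(u)(w))\,dw = \BLg(\differential\datum{B}(u),\datum{p};\datum{A})\prod_j\|g_j\|_1^{p_j}$ on the original left-hand side and with the near-extremality $\BL/\BLg\leq 1+O(\delta^\tau)$, the gaussian Brascamp--Lieb constants cancel, and after absorbing all accumulated $(1+O(\delta^\loss))$ errors into a single $(1+\delta^\loss)$ (which is precisely why $\loss$ is required to be strictly smaller than $\alpha\tau\wedge(\gap-2\alpha\gamma)$), the recursive inequality~\eqref{inductionstep} follows.

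\emph{Main obstacle.} The critical step is the Taylor substitution inside the gaussian in Step~2: the quadratic error $\delta^2$ must be dominated by the smallest width of $g_j$ while $g_j$ must simultaneously fit inside scale $\delta^\alpha$. This delicate balance is attainable only because the gaussian eccentricity is polynomially controlled via our effective Lieb theorem, and the whole network of constraints on $\alpha,\gap,\gamma,\tau,\loss$ is tuned precisely to close this loop alongside the hypotheses required for Step~3. Without effective eccentricity control, one could not simultaneously require near-extremality and sufficient concentration, and the induction-on-scales scheme would not close.
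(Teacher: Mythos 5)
Your Steps 1 and 2 correctly identify the mechanism: choose a gaussian $\delta^\tau$-near-extremiser via Corollary~\ref{effliecor} at scale roughly $\delta^{\alpha(1+\gamma)}$, realise a nonlinear analogue of Ball's inequality~\eqref{Ball2}, truncate the gaussian integral, absorb the Taylor error in passing from $dB_j(u)(x-y)$ to the genuinely nonlinear argument, and feed $h_j^x$ into the definition of $\constantC$ at scale $\delta^\alpha$. This matches the paper's strategy and its Lemmas~\ref{lem:truncation}, \ref{lem:perturb} and \ref{lem:gaussconstant} in spirit.

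The genuine gap is in Step~3. You take $h_j^x(z)=f_j(z)\,g_j(B_j(x)-z)$, so that $\|h_j^x\|_1 = (f_j*g_j)(B_j(x))$ and the outer integral is $\int_{U_{2\delta}(u)}\prod_j(f_j*g_j)^{p_j}(B_j(x))\,dx$ --- still a \emph{nonlinear} Brascamp--Lieb integral at scale $2\delta$. To control it you invoke the base case Proposition~\ref{prop:basecase} at scale $2\delta$ with a new constancy scale $\tilde\mu\geq(2\delta)^{\alpha+\gap}$, claiming that convolution with $g_j$ makes $F_j=f_j*g_j$ $\exp(O(\delta^\loss))$-constant at this new scale. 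This claim is not established and is not automatic: $\tilde\mu$ is much larger than the original $\mu$, so the near-constancy of $f_j$ at scale $\mu$ does not transfer; and $g_j$ itself is \emph{never} $\kappa$-constant at \emph{any} positive scale on the whole of $\R^{n_j}$ (as the paper emphasises just before Lemma~\ref{lem:gaussconstant}), so the comparison $g_j(w-z)\leq\kappa'' g_j(w+h-z)$ fails as $|w-z|$ ranges over the unbounded support of $f_j$. One would need a careful tail argument, balancing the gaussian decay against the exponential blow-up of the comparison factor, and this is exactly the kind of delicate analysis that the paper deliberately avoids.

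The paper sidesteps this entirely by a different choice of localising argument inside the gaussian. After the analogue of your Taylor substitution (Lemma~\ref{lem:perturb}), the paper puts the \emph{affine} map $L_j^{u,y}x = L_j^u x - B_j(y)$ inside $g_j$, i.e.\ uses $h_j^x(w) = f_j(w)\,g_{u,\delta^\alpha,j}(L_j^u x - w)$ rather than your $g_j(B_j(x)-w)$. After applying the definition of $\constantC$ at scale $\delta^\alpha$, the outer $x$-integral then has integrand $\prod_j(f_j*g_{u,\delta^\alpha,j})^{p_j}(L_j^u x)$, which is a \emph{linear} Brascamp--Lieb integral; one closes directly with the linear inequality \eqref{BL} on all of $\R^n$ and the $L^1$-normalisation of $g_{u,\delta^\alpha,j}$, with no further appeal to the base case or to any local-constancy property of the convolved function. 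To repair your argument you should replace $g_j(B_j(x)-z)$ by $g_j(L_j^u x - z)$ throughout Step~2 --- the Taylor error is of the same order --- and then Step~3 reduces to the trivial observation that $\|f_j*g_j\|_1=\|f_j\|_1$ after applying the linear inequality.

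Two smaller points: you cannot arrange each $g_j$ independently to have largest width exactly $\delta^{\alpha+\gamma}$ using only the common isotropic scaling $\datum{A}\mapsto\lambda\datum{A}$; the paper instead normalises $\|A_j\|\lesssim1$, $\|A_j^{-1}\|<\delta^{-2\tau N}$ and then rescales by $\gausscl$. And the parameter constraint you need is $\loss<\gap-2\alpha\gamma$ (the factor $\alpha$ matters because the relevant ratio is domain radius $\delta^\alpha$ over squared gaussian width $\delta^{2\alpha(1+\gamma)}$), which is stronger than your $\gap>2\gamma+\loss$.
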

The reader may wish to view \eqref{inductionstep} as a certain ``near-monotonicity property" of the function $\constantC$ in the parameter $\delta$, since the multiplicative factors $(1 + \delta^{\loss})$ and $\exp(\delta^\loss)$ are close to $1$ for small $\delta$.
It is interesting to compare \eqref{inductionstep} with the related near-monotonicity results in \cite{BCT}, \cite{BHT}, \cite{BB} and \cite{TaoBil}.

We conclude this section by showing how Theorem \ref{main} follows from Propositions \ref{prop:recursive} and \ref{prop:basecase}.

\begin{proof}[Proof of Theorem \ref{main}]
Fix $\epsilon>0$ and set $\kappa=1+\epsilon$.
By an elementary density argument, which we now sketch, it will be enough to show that there exist a constant  $\delta_0 \simeqepsilonone$ depending on $\epsilon$ and a constant $C \simeqone$ such that
\begin{equation}\label{nuf}
\constantC(0,\delta_0,\mu, 1+\epsilon)\leq (1+C\epsilon)\BL(\differential \datum{B}(0), \datum{p})
\end{equation}
uniformly in $\mu>0$.
As we observed above, for each $t>0$, and $f_j\in L^1(2B_j(U_{\delta_0}(0)))$, the Poisson extension satisfies $f_j*P^{(n_j)}_t\in L^1(\R^{n_j}; \mu, \kappa)$, and so after restriction $f_j*P^{(n_j)}_t \in L^1(2B_j(U_{\delta_0}(0));\mu, \kappa)$, provided that $\mu$ is chosen to be sufficiently small.
Hence \eqref{nuf}, combined with the $L^1$ normalisation of the $n_j$-dimensional Poisson kernel $P_t^{(n_j)}$, imply that
\[
\int_{U_{\delta_0}(0)}\prod_{j=1}^m \Bigl(f_j*P^{(n_j)}_t (B_j(x))\Bigr)^{p_j} \,dx
\leq (1+C\epsilon)\BL(\differential \datum{B}(0), \datum{p})
        \prod_{j=1}^m\left(\int_{\R^{n_j}}f_j(x_j) \,dx_j\right)^{p_j}
\]
uniformly in $t>0$.
Since the nonnegative functions $f_j*P^{(n_j)}_t$ converges to $f_j$ almost everywhere as $t\to 0$, Theorem \ref{main} follows by Fatou's lemma.
We now turn to the proof of \eqref{nuf}.

We choose a sufficiently small constant $\delta_0 \simeqepsilonone$, depending on $\epsilon$, to satisfy a number of constraints.
The first of these is $\delta_0\leq \nu/3$, where $\nu$ is sufficiently small that  both Propositions \ref{prop:basecase} and \ref{prop:recursive} hold (that this is possible follows from the fact that both propositions implicitly require that $\nu$ is sufficiently small).

Given $\mu>0$, either the base case provided by Proposition \ref{prop:basecase} holds directly, that is, $\delta_0^{\alpha+\gap}\leq \mu$, or $\mu$ satisfies the threshold condition that $\delta_0^{\alpha+\gap}>\mu$.
In the former case there is nothing more to prove.
In the latter, we shall use the recursive inequality \eqref{inductionstep} of Proposition \ref{prop:recursive} to connect $\constantC(0,\delta_0,\mu, 1+\epsilon)$ with the base case.

We define $\delta_k=\delta_0^{\alpha^{k}}$, where $k \in \N$, and apply Proposition \ref{prop:recursive} to see that
\begin{equation}\label{toiterate}
\constantC(u,\delta_k,\mu,\kappa) \leq (1 + \delta_k^{\loss}) \max_{x\in U_{2\delta_k}(u)} \;\constantC(x,\delta_{k+1},\mu,\kappa\exp(\delta_k^{\loss}))
\end{equation}
for all $u$ in the algebraic sum
$
\widetilde{U}_k:=U_{2\delta_0}(0)+U_{2\delta_1}(0)+\dots+U_{2\delta_{k-1}}(0)
$
and
$k$ such that $\delta_k^{\alpha+\gap}>\mu$.
Here we see the reason for the constraint $\delta_0\leq \nu/3$ (rather than the seemingly more natural $\delta_0\leq\nu$): \eqref{toiterate} requires that $\widetilde{U}_k\subseteq U_\nu(0)$, which is indeed the case for sufficiently small $\delta_0$ independent of $k$.

Choosing $k_*$ to be the smallest natural number $k$ for which $\delta_k^{\alpha+\gap}\leq\mu$, we may iterate \eqref{toiterate} $k_*$ times to obtain
\[
\constantC(0,\delta_0,\mu, 1+\epsilon) \leq \prod_{k=0}^{k_*-1}(1 + \delta_k^{\loss}) \max_{u\in \widetilde{U}_{k_*}} \constantC\Bigl(u,\delta_{k_*},\mu,(1+\epsilon)\prod_{k=0}^{k_*-1}\exp(\delta_k^{\loss})\Bigr).
\]
Since $\delta_{k_*}^{\alpha+\gap}\leq\mu$ we may now deduce from Proposition \ref{prop:basecase} that
\begin{equation} \label{almostthere}
\constantC(0,\delta_0,\mu, 1+\epsilon) \leq (1+\epsilon)^\sump \max_{u\in \widetilde{U}_{k_*}} \BL(\differential \datum{B}(u), \datum{p}) \prod_{k=0}^{k_*-1}\left(1 + \delta_k^{\loss}\right)\exp(\sump\delta_k^{\loss}) .
\end{equation}
At this point, it is clear that we need the Brascamp--Lieb constant to behave sufficiently nicely with respect to the linear mappings.
Fortunately, the continuity result for general data proven recently in \cite{BBCF} and \cite{GGOW} is sufficient here, and ensures that
\[
\max_{u\in \widetilde{U}_{k_*}} \BL(\differential \datum{B}(u), \datum{p})\leq (1+\epsilon)\BL(\differential \datum{B}(0), \datum{p})
\]
for an appropriate choice of a constant  $\delta_0 \simeqepsilonone$ (depending on $\epsilon$).

For the product term in \eqref{almostthere}, note that
\begin{equation*}
\log\left( \prod_{k=0}^{k_*-1}(1+\delta_k^\loss)\exp(\sump\delta_k^{\loss})\right) \leq (1+\sump)\sum_{k=0}^{\infty} \delta_k^{\loss} = (1+\sump)\sum_{k=0}^{\infty} \delta_0^{\alpha^k\loss}\lesssim \delta_0^{\loss}
\end{equation*}
and so $\prod_{k=0}^{k_*-1}(1+\delta_k^\loss)\exp(\sump\delta_k^{\loss}) \leq 1 + \epsilon$ , for an appropriate $\epsilon$-dependent choice of constant $\delta_0 \simeqepsilonone$.
Putting together the above, we have shown that
\[
\constantC(0,\delta_0,\mu, 1+\epsilon)
\leq (1+\epsilon)^{\sump + 3} \BL(\differential \datum{B}(0), \datum{p}),
\]
which establishes \eqref{nuf}, and hence Theorem \ref{main}.
\end{proof}

It remains to prove Proposition \ref{prop:recursive}.

\subsection{Gaussian preliminaries}\label{sect:gaussians}
Before embarking on our proof of Proposition \ref{prop:recursive}, we introduce a family of $m$-tuples of gaussians that will be pivotal in our argument.

As described in Section \ref{Sect:3}, our proof of Proposition \ref{prop:recursive} may be interpreted as a certain nonlinear version of the proof of the inequality \eqref{Ball2} for the linear Brascamp--Lieb constant.
As the nonlinear Brascamp--Lieb constant $\constantC(u,\delta,\mu, k)$ from Definition \ref{defC} involves a localisation to the $\delta$-ball $U_\cutscl(u)$, the gaussians that we introduce here will be near-extremisers for the underlying linear datum $(\differential\datum{B}(u),\datum{p})$, and furthermore, will be such that they remain near-extremisers when truncated to fit $U_\cutscl(u)$.
This requires that we choose the gaussian near-extremisers to be ``nearly isotropic" with variance ``close to" $\delta^{1+\gamma}$ for some suitable $\gamma>0$ (as will be become apparent, the various constraints of our inductive scheme demand that we choose $\gamma$ satisfying \eqref{satisfied?!}).
Corollary \ref{effliecor} will guarantee the existence of such gaussian near-extremisers.
Of course, in order for the resulting gaussian near-extremisers to be ``sufficiently close to isotropic", they should correspond to positive definite matrices that are, modulo scalings,
 ``sufficiently close to the identity".
This requires us to use $\delta^\tau$-near extremisers, where the small exponent $\tau$ satisfies \eqref{taucond}.
This subsection clarifies these heuristic ideas, and establishes several key lemmas that will be used in the forthcoming proof of Proposition \ref{prop:recursive}.

Note the dual role of $\delta$ in this set-up: it determines both the accuracy and the scale of the near-extremiser.

Potentially  many gaussians are in play here.
Not only do we typically have a different gaussian at every point $u$, but we may also have a different gaussian at every scale, at every point.
We note that this second layer of complexity in our family of gaussians may sometimes be removed in cases where extremisers exist.
One such special case is when the family of linear maps $\differential\datum{B}(0)$ satisfies the Loomis--Whitney-type kernel condition \eqref{bas}.
Another special case is when $(\differential\datum{B}(0),\datum{p})$ is \emph{simple} --- that is, when \eqref{char} holds with strict inequality for all nontrivial proper subspaces $V$.
In each of these situations our proof of Theorem \ref{main} admits some simplification.
For example, in the case of simple data, the role of Theorem \ref{efflie} may be replaced with certain regularity properties of gaussian extremisers due to Valdimarsson \cite{ValdBestBest}. We refer to \cite{BBBF} for the details of this.

To select these $m$-tuples of gaussians we shall first choose gaussians that are suitably near-extremising, and then scale them appropriately.
We pause to observe that the family of gaussian $\delta$-near extremisers, for a given datum $(\datum{L},\datum{p})$, is invariant under common isotropic scalings of their associated quadratic forms.
To be precise, the family of all positive-definite $\datum{A}=(A_j)$ satisfying
\[
\BLg(\datum{L},\datum{p};\datum{A}) \geq (1-\delta)\BL(\datum{L},\datum{p})
\]
is invariant under the scaling $\datum{A}\mapsto\lambda\datum{A}$ for all $\lambda>0$.
Using the elementary fact that
\begin{equation}\label{intgau}
\BL(\datum{L},\datum{p};\datum{f})=\BLg(\datum{L},\datum{p};\datum{A})
\end{equation}
for $\datum{f}$ satisfying \eqref{gau},
we are therefore at liberty to rescale our gaussian near-extremisers without penalty.

By Corollary \ref{effliecor}, there exists a constant $\nu \simeqone$ such that for each $u \in U_\nu(0)$ and $\delta \in (0,\nu)$, there exists an  $m$-tuple of positive definite matrices $(A_{u,\delta,j})_{j=1}^m$ satisfying
\begin{align}
    \label{e:mineig}     \|A_{u,\delta,j}^{-1}\| & < \delta^{-2\tau N}, \\
    \label{e:maxeig}      \|A_{u,\delta,j} \| & \lesssim 1, \\
    \label{brian}  \BLg(\differential\datum{B}(u),\datum{p};\datum{A}_{u,\delta}) & \geq (1-\delta^\tau) \BL(\differential\datum{B}(u),\datum{p}).
\end{align}
Here, merely for convenience, we have made a renormalisation to obtain \eqref{e:mineig} and \eqref{e:maxeig}, rather than the symmetric bounds which arise directly from Corollary \ref{effliecor}.
These bounds formalise our requirement that the $A_{u,\delta,j}$ are sufficiently ``close to the identity", since $\tau N<\gamma$, and $\gamma$ is chosen to be sufficiently small --- see \eqref{taucond} and \eqref{satisfied?!}.

In what follows the $n\times n$ matrix
\[
M_{u,\delta} := \sum_{j=1}^m p_j dB_j(u)^* A_{u,\delta,j} dB_j(u)
\]
will play a crucial role.
It naturally arises via the identity
\begin{align}\label{maingaus}
\prod_{j=1}^m \exp(-\pi p_j\langle A_{u,\delta,j}dB_j(u)x,dB_j(u)x \rangle)
&= \exp(-\pi \langle  M_{u,\delta} x,x \rangle).
\end{align}
As we shall see, the family of gaussians
\[
\exp(-\pi \langle  M_{u,\delta} x,x \rangle)
\]
(after rescaling and  suitable truncations) will be our main tool in passing from scale $\delta^\alpha$ to scale $\delta$ in Proposition \ref{prop:recursive}.
It will be important to observe that $M_{u,\delta}$ is also, in some sense, close to the identity matrix.
In particular,
there exists a constant $\nu \simeqone$ such that
\begin{equation} \label{steve}
\sup_{u\in U_\nu(0)}\|M_{u,\delta}^{-1/2}\| \lesssim \delta^{-\tau N}
\end{equation}
for all $\delta \in (0,\nu)$.
To see this, note that if $\lambda$ is any eigenvector of $M_{u,\delta}$ with corresponding unit eigenvector $x$ in $\R^n$, then
\begin{align*}
\lambda = \sum_{j=1}^m p_j \langle A_jdB_j(u)x,dB_j(u)x\rangle \gtrsim \delta^{2\tau N} \sum_{j=1}^m |dB_j(u)x|^2 \gtrsim \delta^{2\tau N}
\end{align*}
for a suitably small constant  $\nu \simeqone$, $u \in U_\nu(0)$ and $\delta \in (0,\nu)$.
Here, the first lower bound follows from \eqref{e:mineig}.
The second lower bound at the origin follows from the fact that finiteness of $\BL(\datum{L},\datum{p})$ ensures that the intersection of the kernels of the $L_j$ is trivial, and a suitably small choice of the constant $\nu \simeqone$ ensures that such a bound extends to all $u \in U_\nu(0)$.
The estimate in \eqref{steve} now follows.

Finally, we  define a family of $L^1$-normalised, scaled gaussians, denoted  $\datum{g}_{u,\delta}=(g_{u,\delta,j})_{j=1}^m$, by
\[ g_{u,\delta,j}(x)=(\gausscl)^{-n_j}\det(A_{u,\delta,j})^{1/2}\exp\left(-\pi \left\langle  A_{u,\delta,j}\frac{x}{\gausscl},\frac{x}{\gausscl} \right\rangle\right).\]
By \eqref{intgau}, \eqref{brian} and the scale-invariance of near-extremisers,
\begin{equation}\label{gnear}
\BL(\differential\datum{B}(u),\datum{p};\datum{g}_{u,\delta})  \geq (1-\delta^\tau) \BL(\differential\datum{B}(u),\datum{p}).
\end{equation}
These gaussians will be used to achieve localisation in the proof of Proposition \ref{prop:recursive}.



We conclude this section with three technical lemmas concerning the gaussians  $\datum{g}_{u,\delta}$, which feed into the proof of Proposition \ref{prop:recursive} in the next section.
The first of the lemmas essentially states that we may truncate our gaussian near-extremisers to certain $\delta$-balls without losing too much. 
\begin{lemma}\label{lem:truncation}
For each $\eta>0$ there exists a constant $\nu>0$ such that
\begin{align*}
\int_{\R^n}\prod_{j=1}^m g_{u,\delta,j}^{p_j}(dB_{j}(u) x) \,dx
\leq (1+\delta^{\eta}) \int_{U_\delta(0)}\prod_{j=1}^m g_{u,\delta,j}^{p_j}(dB_{j}(u) x) \,dx,
\end{align*}
for all $u\in U_\nu(0)$ and $\delta \in (0,\nu)$.
\end{lemma}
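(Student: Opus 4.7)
The plan is to reduce the inequality to a statement about a single Gaussian, and then to exploit the lower eigenvalue bound for $M_{u,\delta}$ provided by \eqref{steve}. Using the identity \eqref{maingaus} after the rescaling $x \mapsto x/\delta^{1+\gamma}$, each factor in the product collapses and I obtain
\[
\prod_{j=1}^m g_{u,\delta,j}^{p_j}(dB_j(u)x) \;=\; C_{u,\delta}\exp\bigl(-\pi\delta^{-2(1+\gamma)} \langle M_{u,\delta} x, x\rangle\bigr),
\]
where $C_{u,\delta}$ is a constant independent of $x$. This constant cancels on both sides of the desired inequality, so it suffices to prove
\[
\int_{\R^n \setminus U_\delta(0)} e^{-\pi\delta^{-2(1+\gamma)}\langle M_{u,\delta}x,x\rangle} \,dx \;\leq\; \delta^\eta \int_{U_\delta(0)} e^{-\pi\delta^{-2(1+\gamma)}\langle M_{u,\delta}x,x\rangle} \,dx.
\]

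Next, I would change variables by setting $y = \delta^{-(1+\gamma)} M_{u,\delta}^{1/2} x$, which reduces the problem to comparing
\[
T := \int_{\{|M_{u,\delta}^{-1/2}y| > \delta^{-\gamma}\}} e^{-\pi|y|^2}\,dy \quad \text{and} \quad I := \int_{\{|M_{u,\delta}^{-1/2}y| \leq \delta^{-\gamma}\}} e^{-\pi|y|^2}\,dy,
\]
and showing $T \leq \delta^\eta I$. The key input is \eqref{steve}: the operator-norm bound $\|M_{u,\delta}^{-1/2}\| \lesssim \delta^{-\tau N}$ implies that on the tail region $|M_{u,\delta}^{-1/2}y| > \delta^{-\gamma}$ one has $|y| \gtrsim \delta^{-(\gamma-\tau N)}$. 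The crucial point is that $\gamma - \tau N > 0$, which is exactly \eqref{taucond}, so this lower bound tends to infinity as $\delta \to 0$. A standard Gaussian tail estimate then yields
\[
T \;\lesssim\; \exp\bigl(-c\,\delta^{-2(\gamma-\tau N)}\bigr),
\]
which is super-polynomially small in $\delta$ and is therefore bounded by $\delta^\eta$ once $\nu$ (and hence $\delta$) is small enough in terms of $\eta$.

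Finally, for the main integral $I$, the same operator-norm bound guarantees the inclusion $\{|y| \leq c\,\delta^{-(\gamma-\tau N)}\} \subseteq \{|M_{u,\delta}^{-1/2}y| \leq \delta^{-\gamma}\}$, and for $\delta$ sufficiently small the left-hand set contains the unit ball, so $I$ is bounded below by a positive absolute constant. Combining these two facts gives $T \leq \delta^\eta I$, as required. The argument is essentially bookkeeping: all the substantive content has already been front-loaded into the eigenvalue bound \eqref{steve} and the arithmetic constraint $\tau N < \gamma$ built into the choice of exponents in Section \ref{sect:settingup}, so no genuine obstacle remains.
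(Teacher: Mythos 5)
Your proof is correct and follows essentially the same route as the paper: reduce to a Gaussian tail estimate via \eqref{maingaus}, change variables by $M_{u,\delta}^{1/2}/\delta^{1+\gamma}$, and use \eqref{steve} together with $\tau N < \gamma$ to show the tail is super-polynomially small. The only cosmetic difference is that you explicitly bound the "inside" integral $I$ below by a constant, whereas the paper instead normalises so that $T + I = 1$ and shows $T \lesssim \delta^{2\eta}$; both are equivalent elementary steps.
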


\begin{proof}
Fix $\eta>0$.
By elementary considerations it suffices to show that
\begin{equation} \label{e:chopgaus}
\int_{\R^n\backslash U_\delta(0)}\prod_{j=1}^m g_{u,\delta,j}^{p_j} (dB_{j}(u) x) \,dx
\lesssim \delta^{2\eta}\int_{\R^n}\prod_{j=1}^m g_{u,\delta,j}^{p_j}(dB_{j}(u) x) \,dx
\end{equation}
for sufficiently small $\delta > 0$, depending on $\eta$.
To establish \eqref{e:chopgaus} we first observe that, by \eqref{maingaus}, the integrand is expressible as
\[
\prod_{j=1}^m g_{u,\delta,j}^{p_j}(dB_{j}(u)x)
= (\gausscl)^{-n}\prod_{j=1}^m (\det A_{u,\delta,j})^{p_j/2} \exp\left(-\pi\left\langle M_{u,\delta} \frac{x}{\gausscl},\frac{x}{\gausscl}\right\rangle\right).
\]
After rescaling  and a change of variables \eqref{e:chopgaus} is equivalent to
\[
\int_{| M_{u,\delta}^{-1/2}x| \geq \delta^{-\gamma} }e^{-\pi|x|^2} \,dx \lesssim \delta^{2\eta}
\]
and, since $| M_{u,\delta}^{-1/2}x| \leq \delta^{-\tau N} |x|$ (for an appropriate choice of the constant $\nu \simeqone$) from \eqref{steve}, it suffices to show that
\begin{equation} \label{e:chopgausETS}
\int_{|x|\gtrsim \delta^{-\gamma+\tau N}}e^{-\pi|x|^2} \,dx\lesssim \delta^{2\eta}
\end{equation}
for sufficiently small $\delta$.
To see this, we simply observe that
\[
\int_{|x|\gtrsim \delta^{-\gamma+\tau N} }e^{-\pi|x|^2} \,dx\lesssim \int_{|x|\sim\delta^{-\gamma+\tau N} }e^{-\pi|x|^2} \,dx\lesssim (\delta^{-\gamma+\tau N} )^ne^{-c(\delta^{-\gamma+\tau N} )^2}
\]
for some constant $c \simeqone$, and therefore \eqref{e:chopgausETS} holds, since $\tau < \gamma/N$, provided that $\delta$ is sufficiently small.
\end{proof}

Our next technical lemma is the following, which captures the fact that gaussian near-extremisers are
stable under perturbations of their centres by amounts which are small relative to their scale.
This lemma will come into play as we transition between scales in the proof of the recursive inequality \eqref{inductionstep}, and thus refers to the more localised gaussians $g_{u,\delta^\alpha,j}$.
Before providing the precise statement, we introduce the affine mappings $L_j^{u,y} : \R^n \to \R^{n_j}$ given by
\[
L^{u,y}_jx = L^u_jx - B_j(y) = B_j(u)+dB_j(u)(x-u) - B_j(y).
\]

\begin{lemma}\label{lem:perturb} There exists a constant $\nu \simeqone$ such that if $\delta \in (0,\nu)$, then for $u\in U_{\nu}(0)$ and $y\in U_\delta(u)$,
\begin{align*}
\int_{U_{\delta^\alpha}(y)}\prod_{j=1}^m g_{u,\delta^\alpha,j}^{p_j} (dB_j(u) (x-y)) \,dx
\leq (1+\delta^{\gap}) \int_{U_{\delta^\alpha}(y)}\prod_{j=1}^m g_{u,\delta^\alpha,j}^{p_j} ( L^{u,y}_j x) \,dx.
\end{align*}
\end{lemma}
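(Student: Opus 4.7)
The plan is to reduce the claimed integral inequality to a pointwise comparison of the two integrands on $U_{\delta^{\alpha}}(y)$. First I would compute
\[
L^{u,y}_j x - dB_j(u)(x-y) \;=\; B_j(u) - B_j(y) + dB_j(u)(y-u) \;=:\; -\eta_j(u,y),
\]
observing that $\eta_j$ is \emph{independent of $x$}, and that by the $C^{2}$ Taylor bound \eqref{taylor} one has $|\eta_j| \lesssim |y-u|^{2} \leq \delta^{2}$ for $u,y\in U_{\nu}(0)$ with $\nu\simeqone$ sufficiently small. Thus the task reduces to comparing $g_{u,\delta^{\alpha},j}$ at two points separated by a fixed (in $x$) shift of size $\delta^{2}$.

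Next, writing $A_j := A_{u,\delta^{\alpha},j}$ and $r:=\delta^{\alpha(1+\gamma)}$ for brevity, the explicit gaussian form of $g_{u,\delta^{\alpha},j}$ yields the pointwise identity
\[
\frac{g_{u,\delta^{\alpha},j}(dB_j(u)(x-y))}{g_{u,\delta^{\alpha},j}(L^{u,y}_j x)}
\;=\;\exp\!\left(\frac{\pi}{r^{2}}\bigl[\,2\langle A_j L^{u,y}_j x,\,\eta_j\rangle - \langle A_j\eta_j,\eta_j\rangle\,\bigr]\right).
\]
For $x\in U_{\delta^{\alpha}}(y)$ and $u\in U_{\nu}(0)$ with $\nu$ small, the uniform $C^{1}$ bound on $B_j$ gives $|L^{u,y}_j x|\lesssim \delta^{\alpha}$; together with $|\eta_j|\lesssim \delta^{2}$ and $\|A_j\|\lesssim 1$ (from \eqref{e:maxeig}), the absolute value of the exponent is bounded by a constant multiple of
\[
\frac{|L^{u,y}_j x|\,|\eta_j| + |\eta_j|^{2}}{r^{2}} \;\lesssim\; \delta^{2-\alpha-2\alpha\gamma} + \delta^{4-2\alpha-2\alpha\gamma} \;\lesssim\; \delta^{2-\alpha-2\alpha\gamma},
\]
the first term dominating for small $\delta$ since $\alpha<2$.

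Finally, the parameter choice \eqref{satisfied?!} delivers $(n+2)\alpha\gamma<2-\alpha-\beta$, and in particular $2-\alpha-2\alpha\gamma>\beta$. Hence, for $\delta\in(0,\nu)$ with $\nu$ small enough, the pointwise ratio is at most $\exp\bigl(C\delta^{2-\alpha-2\alpha\gamma}\bigr)\leq 1+\delta^{\beta}$. Raising to the power $p_j$, multiplying over $j$, and integrating over $U_{\delta^{\alpha}}(y)$ produces the stated estimate.

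The only genuine subtlety is spotting the fortunate cancellation that makes $L^{u,y}_j x - dB_j(u)(x-y)$ $x$-independent and of pure Taylor-remainder size $\delta^{2}$. Once this is noted, everything else is careful bookkeeping, verifying that the perturbation size $\delta^{2}$ is sufficiently small compared with the gaussian width $r=\delta^{\alpha(1+\gamma)}$ in the precise sense encoded by \eqref{satisfied?!}. This is exactly the point in the induction where the $C^{2}$ hypothesis on $\datum{B}$ is used in an essential way, and explains the tightening of the parameter constraints that would be required under the weaker $C^{1+\theta}$ regularity noted in the footnote at the start of Section~\ref{sect:settingup}.
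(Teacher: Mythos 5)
Your proof is correct and takes a genuinely different route from the paper's. The paper compares the two integrands \emph{additively}: writing $G(w)=\prod_j g_{u,\delta^\alpha,j}^{p_j}(w_j)$, it bounds $\|G\circ L - G\circ\widetilde L\|_{L^\infty(U_{\delta^\alpha}(y))}$ via the gradient estimate $|\nabla G(w)|\lesssim(\delta^{\alpha(1+\gamma)})^{-(n+2)}|w|$ and the Taylor bound $|L_jx-\widetilde L_jx|\lesssim\delta^2$, integrates to get an $L^1$ bound of order $\delta^{2-\alpha-\gamma\alpha(n+2)}$, and then runs a bootstrapping step (using a lower bound on the right-hand integral from \eqref{brian} and Lemma \ref{lem:truncation}) to convert the additive error into the multiplicative factor $(1+\delta^\beta)$. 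You instead exploit the $x$-independence of $\eta_j := dB_j(u)(x-y)-L^{u,y}_jx$ to compute the pointwise \emph{ratio} of the two gaussian integrands directly; this gives $\exp(O(\delta^{2-\alpha-2\alpha\gamma}))$ uniformly on $U_{\delta^\alpha}(y)$, eliminating both the gradient estimate and the bootstrap, and in fact yielding a slightly better exponent ($2\alpha\gamma\le(n+2)\alpha\gamma$). This is cleaner and, since the exponent $2-\alpha-2\alpha\gamma$ strictly exceeds $\beta$ by \eqref{satisfied?!}, the argument closes comfortably.

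Two small points to tidy. First, there is a sign slip in your displayed formula for the ratio: with $dB_j(u)(x-y)=L^{u,y}_jx+\eta_j$ one gets
\[
\frac{g_{u,\delta^\alpha,j}(dB_j(u)(x-y))}{g_{u,\delta^\alpha,j}(L^{u,y}_jx)}
=\exp\!\left(-\frac{\pi}{r^2}\bigl[2\langle A_jL^{u,y}_jx,\eta_j\rangle+\langle A_j\eta_j,\eta_j\rangle\bigr]\right),
\]
though this is immaterial since you only use the magnitude of the exponent. Second, raising the pointwise ratio bound to the power $p_j$ and multiplying over $j$ yields $(1+\delta^\beta)^{\rho}$ with $\rho=\sum p_j$, not $1+\delta^\beta$ directly; this is harmless because you actually have the sharper pointwise bound $\exp(C\rho\,\delta^{2-\alpha-2\alpha\gamma})$, and $2-\alpha-2\alpha\gamma>\beta$ gives room to absorb the constant for $\delta$ small — but it is worth stating explicitly.
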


\begin{proof} Let $\nu\simeqone$ be a small constant to be specified below.
Fix $\delta \in (0,\nu)$, $u\in U_{\nu}(0)$, and $y\in U_\delta(u)$.
It then suffices to prove, for some $\gap'$ satisfying $\gap < \gap'$, that
\begin{equation}\label{beforebootstraping}
\int_{U_{\delta^\alpha}(y)}\prod_{j=1}^m g_{u,\delta^\alpha,j}^{p_j} (dB_j(u)(x-y)) \,dx - \delta^{\gap'}
\leq  \int_{U_{\delta^\alpha}(y)}\prod_{j=1}^m g_{u,\delta^\alpha,j}^{p_j} (L^{u,y}_j x) \,dx,
\end{equation}
as we may then obtain the desired estimate by a bootstrapping argument.
Indeed, by continuity of the Brascamp--Lieb constant \cite{BBCF}, \eqref{brian} and Lemma \ref{lem:truncation},
\[
{\BL(\differential\datum{B}(0), \datum{p})}
\leq 2{\BL(\differential\datum{B}(u), \datum{p})}
 \leq 4\int_{U_{\delta^\alpha}(y)}\prod_{j=1}^m g_{u,\delta^\alpha,j}^{p_j} (dB_j(u)(x-y)) \,dx,
\]
provided that $\nu$ is sufficiently small.
Therefore \eqref{beforebootstraping} implies that
\[
\left(1-\frac{4\delta^{\gap'}}{{\BL(\differential\datum{B}(0), \datum{p})}}\right)
\int_{U_{\delta^\alpha}(y)}\prod_{j=1}^m g_{u,\delta^\alpha,j}^{p_j} (dB_j(u)(x-y)) \,dx  \nonumber\\
\leq \int_{U_{\delta^\alpha}(y)}\prod_{j=1}^m g_{u,\delta^\alpha,j}^{p_j} (L^{u,y}_j(x)) \,dx,
\]
and the result quickly follows, provided that the constant $\nu \simeqone$ is sufficiently small.

In order to show \eqref{beforebootstraping}, we define
\[
G(w) = \prod_{j=1}^m g_{u,\delta^\alpha,j}^{p_j}(w_j)
\]
for $w = (w_1, \dots, w_m) \in \R^{n_1 + \dots + n_m}$.
Here, and for the remainder of the proof, we suppress the dependence on $u$ and $y$.
Thus, \eqref{beforebootstraping} follows once we show
\begin{equation} \label{e:perturbETS}
\|G \circ L -  G \circ \widetilde{L}\|_{L^1(U_{\delta^\alpha}(y))} \leq \delta^{\gap'},
\end{equation}
where $L_j = L^{u,y}_j$ and $\widetilde{L}_j = dB_j(u)(\cdot-y)$, and we shall show this using an appropriate uniform bound on the integrand.
For this, first we observe the estimate
\[
|\nabla G(w)| \lesssim (\gausscl)^{-\alpha(n+2)} |w|,
\]
which is a consequence of \eqref{e:maxeig} and a use of the scaling condition \eqref{scal} to collect powers of $\delta$.
Since moreover
\[
|L_jx - \widetilde{L}_jx| \lesssim \cutscl^2
\]
from \eqref{taylor}, the mean value theorem implies that
\[
\|G \circ L -  G \circ \widetilde{L}\|_{L^\infty(U_{\delta^\alpha}(y))} \lesssim (\gausscl)^{-\alpha(n+2)} \cutscl^{2+\alpha}, \]
and hence
\[
\|G \circ L -  G \circ \widetilde{L}\|_{L^1(U_{\delta^\alpha}(y))}  \lesssim \delta^{2-\alpha}  \left(\delta^{-\gamma} \right)^{\alpha(2+n)}.
\]
Our choice of $\gamma$ ensures that the exponent on the right-hand side of the above estimate is strictly larger than $\beta$.
Provided the constant $\nu \simeqone$ is sufficiently small, this implies the existence of $\beta' > \beta$ such that \eqref{e:perturbETS} holds, and completes the proof.
\end{proof}

Our final technical lemma shows that appropriately \textit{truncated} gaussians enjoy a local constancy property.
We note that gaussians are not $\kappa$-constant at scale $\mu$ on $\R^n$ for any choice of $\kappa$ and $\mu$ --- an observation that reflects the absence of a fixed-time parabolic Harnack principle.
\begin{lemma}\label{lem:gaussconstant}
There exists a constant $\nu \simeqone$ with the following property:
for all $\delta \in (0,\nu)$ and $\mu>0$ satisfying the threshold condition $\delta^{\alpha+\gap}>\mu$, and for all $u \in U_{\nu}(0)$ and $x\in U_{2\delta}(u)$, the function $g_{u,\delta^\alpha,j}(L^u_jx-\cdot)$ is $\exp(\delta^\loss)$-constant at scale $\mu$ on $B_j(U_{2\delta^\alpha}(x))$.
\end{lemma}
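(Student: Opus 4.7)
The plan is to compute the ratio of the truncated Gaussian at two nearby points directly and show the resulting exponent is dominated by $\delta^\sigma$. Fixing $z \in B_j(U_{2\delta^\alpha}(x))$ and $z' \in \R^{n_j}$ with $|z-z'|\leq\mu$, set $a = L^u_j x - z$ and $a' = L^u_j x - z'$, and write $A = A_{u,\delta^\alpha,j}$ and $\lambda = \delta^{\alpha(1+\gamma)}$. From the explicit formula for $g_{u,\delta^\alpha,j}$,
\[
\frac{g_{u,\delta^\alpha,j}(a)}{g_{u,\delta^\alpha,j}(a')} = \exp\left(\pi\lambda^{-2}\bigl(\langle Aa',a'\rangle - \langle Aa,a\rangle\bigr)\right),
\]
so it suffices to bound the quantity $|\langle Aa,a\rangle - \langle Aa',a'\rangle| = |\langle A(a-a'),a+a'\rangle|$ by Cauchy--Schwarz in terms of $\|A\|$, $|a-a'|$, and $|a|+|a'|$.

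The factor $\|A\|$ is $\lesssim 1$ by \eqref{e:maxeig}, and $|a-a'|=|z-z'|\leq\mu$ trivially. The key step is the size estimate on $a$ (hence on $a'$): writing $z = B_j(y)$ for some $y \in U_{2\delta^\alpha}(x)$, I decompose
\[
a = L^u_j x - B_j(y) = dB_j(u)(x-y) + \bigl(L^u_j y - B_j(y)\bigr).
\]
The first summand is $O(\delta^\alpha)$ and the second is $O(|y-u|^2) = O(\delta^2)$ by \eqref{taylor}, since $|y-u|\leq|y-x|+|x-u|\lesssim\delta$ when $\nu$ is sufficiently small. As $\alpha<2$, this yields $|a|\lesssim\delta^\alpha$, and then $|a'|\leq|a|+\mu\lesssim\delta^\alpha$, using $\mu<\delta^{\alpha+\beta}\leq\delta^\alpha$.

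Assembling these ingredients, $|\langle A(a-a'),a+a'\rangle|\lesssim \mu\,\delta^\alpha$, so the exponent above is bounded by
\[
\pi\lambda^{-2}\cdot C\mu\,\delta^\alpha \leq C'\,\mu\,\delta^{-\alpha-2\alpha\gamma}\leq C'\,\delta^{\beta-2\alpha\gamma},
\]
using the threshold hypothesis $\mu<\delta^{\alpha+\beta}$ in the final step. Since the standing choice of exponents in Section \ref{sect:settingup} ensures $\sigma<\beta-2\alpha\gamma$, this expression is at most $\delta^\sigma$ once $\delta$ is chosen sufficiently small (independently of $u$, $x$, $z$, $z'$, $\mu$), yielding the claimed $\exp(\delta^\sigma)$-constancy. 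There is no genuine obstacle in the argument; the delicate point is rather that the hierarchy of parameters $\alpha,\beta,\gamma,\sigma$ was set up precisely so that the linearisation error $\delta^2$, the Gaussian scale $\delta^{\alpha(1+\gamma)}$, and the constancy scale $\mu<\delta^{\alpha+\beta}$ balance to leave a positive surplus $\beta-2\alpha\gamma-\sigma>0$.
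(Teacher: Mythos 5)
Your proof is correct and follows essentially the same approach as the paper: the paper reduces to showing $g_{u,\delta^\alpha,j}$ is $\exp(\delta^\sigma)$-constant on a ball $U_{C\delta^\alpha}(0)$ via a small general lemma about Gaussians $g_A$ (namely $g_A(x)\leq\exp(3\pi R\|A\|\widetilde{\mu})g_A(x+y)$ for $|x|\leq R$, $|y|\leq\widetilde\mu\leq R$), whereas you unfold that lemma into the direct ratio computation, but the decomposition of $L^u_jx-B_j(y)$, the Taylor and scale estimates, and the final exponent arithmetic $\mu\delta^{-\alpha-2\alpha\gamma}\lesssim\delta^{\beta-2\alpha\gamma}\leq\delta^\sigma$ are identical.
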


\begin{proof}
It suffices to show that, given any constant $C \simeqone$, the function $g_{u,\delta^\alpha,j}$ is $\exp(\delta^\loss)$-constant at scale $\mu$ on $U_{C\delta^\alpha}(0)$, where $u \in U_{\nu}(0)$, for an appropriate choice of the constant $\nu \simeqone$, and $\delta \in (0,\nu)$ and $\mu>0$ satisfying the threshold condition $\delta^{\alpha+\gap}>\mu$.
Indeed, $g_{u,\delta^\alpha,j}(L^u_jx-\cdot)$ is simply a translation of $g_{u,\delta^\alpha,j}$, so to establish the desired claim, it is only necessary to check that $L_j^ux - w \in U_{C\delta^\alpha}(0)$ for some constant $C \simeqone$ whenever $w \in B_j(U_{2\delta^\alpha}(x))$.
This follows since, given such $w \in B_j(U_{2\delta^\alpha}(x))$, we write $w = B_j(y)$ for some $y \in  U_{2\delta^\alpha}(x)$ and use \eqref{taylor} to deduce that
\[
|L^u_jx - w| \lesssim |x-u|^2 + |B_j(x) - B_j(y)| \lesssim \cutscl^2  + \cutscl^\alpha \sim\cutscl^\alpha.
\]

To establish that $g_{u,\delta^\alpha,j}$ is $\exp(\delta^\loss)$-constant at scale $\mu$ on $U_{C\delta^\alpha}(0)$, we first observe that, for any $R \geq 1$, $\widetilde{\mu} \leq R$, $|x| \leq R$ and $|y| \leq \widetilde{\mu}$, the gaussian $g_{A}(x)=\det(A)^{1/2}\exp(-pi \langle  Ax,x \rangle)$ satisfies
\[
g_A(x) \leq \exp(3\pi R\|A\|\widetilde{\mu}) g_A(x+y)
\]
by elementary considerations.
Applying this fact when $A = A_{u,\delta^\alpha,j}$, $R = C\delta^{-\alpha\gamma} $ and $\widetilde{\mu} = \delta^{-\alpha(1+\gamma)}\mu$, we see that $g_{u,\delta^\alpha,j}$ is $\exp(\eta)$-constant at scale $\mu$ on $CU_{\delta^\alpha}(0)$, where
\[
\eta = 3\pi C\|A_{u,\delta^\alpha,j}\|\mu\delta^{-\alpha\gamma}\delta^{-\alpha(1+\gamma)} .
\]
In order to use the preceding observation, we should ensure $\widetilde{\mu} \leq R$; this is simply a consequence of the fact that $\alpha > 1$ and $\beta, \gamma > 0$ (and provided that the constant $\nu \simeqone$ is sufficiently small).
Moreover, under the threshold condition $\delta^{\alpha+\gap}>\mu$, and for $u \in U_{\nu}(0)$ with the constant $\nu \simeqone$ sufficiently small,
\[
\eta \lesssim \delta^{\gap-2\alpha\gamma} = \delta^\loss \delta^{\gap-2\alpha\gamma-\loss}
\]
by \eqref{e:maxeig}.
From our choice of $\sigma$, it follows that $\eta \leq \delta^\loss$ (again, provided that the constant $\nu \simeqone$ is sufficiently small).
This establishes that $g_{u,\delta^\alpha,j}$ is $\exp(\delta^\loss)$-constant at scale $\mu$ on $U_{C\delta^\alpha}(0)$, for any $u \in U_{\nu}(0)$, and thus completes the proof of the lemma.
\end{proof}

\subsection{Proof of the recursive inequality}

The main idea in our proof of Proposition \ref{prop:recursive} is that it is possible to run a nonlinear variant of the argument leading to \eqref{Ball2}.
In doing so, we first claim that it is enough to prove that there exists a constant  $\nu \simeqone$ such that if $u\in U_{\nu}(0)$ and $\delta \in (0,\nu)$ satisfies $\delta^{\alpha+\gap}>\mu$, then
\begin{equation}\label{inductionstep'}
\constantC(u,\delta,\mu,\kappa) \leq (1 + \delta^{\gap})(1+2\delta^{\alpha\tau})^2 \max_{x\in U_{2\delta}(u)} \constantC(x,\delta^\alpha,\mu,\kappa\exp(\delta^\loss)).
\end{equation}
Since $\sigma < \min\{\beta,\alpha \tau\}$, the desired inequality in \eqref{inductionstep} follows from \eqref{inductionstep'} by choosing the constant $\nu \simeqone$ sufficiently small.
Our proof of \eqref{inductionstep'} appeals to Lemmas \ref{lem:truncation} to \ref{lem:gaussconstant} in sequence, with each implicitly imposing its own smallness requirement on $\nu$.
\begin{proof}[Proof of \eqref{inductionstep'}]
We fix $u \in U_{\nu}(0)$, and assume that $\delta \in (0,\nu)$ and $\mu$ satisfy the threshold condition $\delta^{\alpha+\gap}>\mu$.
We also fix $f_j\in L^1(B_j(U_{2\delta}(u));\mu,\kappa)$ and, to shorten some forthcoming expressions, we write $F = \prod_{j=1}^m f_j^{p_j} \circ B_j$.

By \eqref{gnear},
\begin{equation}\label{brianscal}
\BL(\differential\datum{B}(u),\datum{p};\datum{g}_{u,\delta^\alpha})\geq (1-\delta^{\alpha\tau}) \BL(\differential\datum{B}(u),\datum{p}),
\end{equation}
and so, for a sufficiently small constant $\nu \simeqone$,
\[
\int_{U_\delta(u)} F(y) \,dy
\leq \frac{1+2\delta^{\tau\alpha}}{\BL(\differential\datum{B}(u), \datum{p})} \int_{U_\delta(u)} F(y) \int_{\R^n}\prod_{j=1}^m g_{u,\delta^\alpha,j}^{p_j}(dB_j(u)x)  \,dx \,dy.
\]
By Lemma \ref{lem:truncation} with $\eta=\tau$, followed by a translation change of variables in $x$,
\begin{equation*}
\int_{U_\delta(u)} F(y) \,dy
\leq \frac{(1+2\delta^{\tau\alpha})^2}{\BL(\differential\datum{B}(u), \datum{p})} \int_{U_\delta(u)}F(y) \int_{U_{\delta^\alpha}(y)}\prod_{j=1}^m g_{u,\delta^\alpha,j}^{p_j}(dB_j(u)(x-y))   \,dx \,dy.
\end{equation*}
Now we may apply Lemma \ref{lem:perturb}, and then Fubini's theorem, to deduce that
\begin{align*}
\int_{U_\delta(u)}F(y) \,dy
& \leq \frac{(1+ \delta^{\gap})(1+2\delta^{\tau\alpha})^2}{\BL(\differential\datum{ B}(u), \datum{p})}  \int_{ U_\delta(u)}\int_{U_{\delta^\alpha}(y)}F(y) \prod_{j=1}^m g_{u,\delta^\alpha,j}^{p_j}(L^{u,y}_jx)  \,dx \,dy \\
& \leq \frac{(1+ \delta^{\gap})(1+2\delta^{\tau\alpha})^2}{\BL(\differential\datum{B}(u), \datum{p})}  \int_{U_{\delta}(u)+U_{\delta^\alpha}(0)} \int_{U_{\delta^\alpha}(x)} F(y) \prod_{j=1}^m g_{u,\delta^\alpha,j}^{p_j}(L^{u,y}_jx)  \,dy \,dx.
\end{align*}
We recall that $L^{u,y}_j$ is the affine map given by
\[
L^{u,y}_jx = L^u_jx - B_j(y) = B_j(u)+dB_j(u)(x-u) - B_j(y).
\]
The inner integral is thus
\[
\int_{U_{\delta^\alpha}(x)}\prod_{j=1}^m \left(h_j^x\right)^{p_j} (B_j(y)) \,dy
\]
where
\[
h_j^x(w) = f_j(w) g_{u,\delta^\alpha,j}(L^u_jx - w).
\]
By assumption, $f_j\in L^1(B_j(U_{2\delta}(u));\mu,\kappa)$.
Further, if $x\in U_\delta(u)+U_{\delta^\alpha}(0)$, then $U_{2\delta^\alpha}(x) \subseteq U_{2\delta}(u)$, whence (after restriction) $f_j\in L^1(B_j(U_{2\delta^\alpha}(x));\mu,\kappa)$.
Moreover, from Lemma \ref{lem:gaussconstant},
\[
g_{u,\delta^\alpha,j}(L^u_jx-\cdot)\in L^1(B_j(U_{2\delta^\alpha}(x));\mu,\exp(\delta^\loss)),
\]
and therefore
\[
h_j^x\in L^1(B_j(U_{2\delta^\alpha}(x));\mu,\kappa\exp(\delta^\loss))
\]
whenever $x\in U_\delta(u)+U_{\delta^\alpha}(0)$.

Using the definition of $\constantC(x,\delta^\alpha,\mu,\kappa\exp(\delta^\loss))$, and slightly enlarging the domain of integration in $x$, we see that
\begin{align*}
\int_{U_\delta(u)}F(x) \,dx
&\leq \frac{(1+ \delta^{\gap})(1+2\delta^{\tau\alpha})^2\constantM}{\BL(\differential\datum{ B}(u), \datum{p})} \int_{U_{2\delta}(u)} \prod_{j=1}^m \left(\int_{\R^{n_j}} h_j^x(y) \,dy\right)^{p_j} \,dx \\
&= \frac{(1+ \delta^{\gap})(1+2\delta^{\tau\alpha})^2 \constantM}{\BL(\differential\datum{B}(u), \datum{p})} \int_{U_{2\delta}(u)} \prod_{j=1}^m (f_j*g_{u,\delta^\alpha,j})^{p_j} (L^u_j x) \,dx,
\end{align*}
where $\constantM = \max_{x \in U_{2\delta}(u)} \constantC(x,\delta^\alpha,\mu,\kappa\exp(\delta^\loss))$.
To conclude, we simply apply the linear inequality and the fact that the gaussians $g_{u,\delta^\alpha,j}$ are normalised in $L^1$ to obtain
\begin{align*}
\int_{U_\delta(u)} F(x) \,dx
&\leq (1+ \delta^{\gap})(1+2\delta^{\tau\alpha})^2\constantM \prod_{j=1}^m \left(\int_{\R^{n_j}}f_j*g_{u,\delta^\alpha,j}(x+B_j(u)) \,dx\right)^{p_j} \\
&=  (1+ \delta^{\gap})(1+2\delta^{\tau\alpha})^2 \constantM \prod_{j=1}^m
        \left(\int_{\R^{n_j}} f_j(x_j) \,dx_j\right)^{p_j}
\end{align*}
and \eqref{inductionstep'} follows.
\end{proof}

\end{document}